\newtheorem{lemma}{Lemma}
\newtheorem{prop}{Proposition}
\newtheorem{obs}{Observation}
\newtheorem{thm}{Theorem}
\newtheorem{cor}{Corollary}
\theoremstyle{definition}
\newtheorem{defn}{Definition}
\newtheorem{cond}{Condition}
\newtheorem{notn}{Notation}
\newtheorem{strgm}{Stratagem}
\theoremstyle{remark}
\newtheorem{rem}{Remark}
\newtheorem{ex}{Example}
\newtheorem{exs}[ex]{Examples}
\newcounter{numl}
\newcommand{\labelnuml}{\textup{(\roman{numl})}}
\newenvironment{numlist}{\begin{list}{\labelnuml}%
{\usecounter{numl}\setlength{\leftmargin}{0pt}%
\setlength{\itemindent}{2\parindent}%
\setlength{\itemsep}{\smallskipamount}\def
\makelabel ##1{\hss \llap {\upshape ##1}}}}{\end{list}}
\newenvironment{bulletlist}{\begin{list}{\labelitemi}%
{\setlength{\leftmargin}{\parindent}\def
\makelabel ##1{\hss \llap {\upshape ##1}}}}{\end{list}}
\DeclareSymbolFont{script}{U}{eus}{m}{n}
\DeclareSymbolFontAlphabet{\mathscr}{script}
\DeclareMathSymbol{\Wedge}{0}{script}{"5E}
\DeclareMathAlphabet{\mathrmsl}{OT1}{cmr}{m}{sl}
\newcommand{\R}{{\mathbb R}}
\newcommand{\C}{{\mathbb C}}
\newcommand{\Z}{{\mathbb Z}}
\newcommand{\N}{{\mathbb N}}
\newcommand{\Q}{{\mathbb Q}}
\newcommand{\Sph}{{\mathbb S}}
\newcommand{\T}{{\mathbb T}}
\newcommand{\cA}{{\mathcal A}}
\newcommand{\cB}{{\mathcal B}}
\newcommand{\cC}{{\mathcal C}}
\newcommand{\cE}{{\mathcal E}}
\newcommand{\cH}{{\mathcal H}}
\newcommand{\cI}{{\mathcal I}}
\newcommand{\cJ}{{\mathcal J}}
\newcommand{\cL}{{\mathcal L}}
\newcommand{\cS}{{\mathscr S}}
\newcommand{\g}{\mathfrak g}
\newcommand{\un}{\mathfrak u}
\newcommand{\tor}{{\mathfrak t}}
\newcommand{\torh}{{\mathfrak h}}
\newcommand{\su}{\mathfrak{su}}
\newcommand{\crv}{\mathfrak{cr}}
\newcommand{\ham}{\mathfrak{ham}}
\newcommand{\Ab}[1][\cS]{\T_{#1}}
\newcommand{\ab}[1][\cS]{\tor_{#1}}
\newcommand{\Rv}{V}
\newcommand{\bK}{{\mathbf K}}
\newcommand{\Rb}[1]{{\mathscr K}^{#1}}
\newcommand{\Lb}[1]{{\mathscr R}^{#1}}
\newcommand{\Ds}{{\mathscr D}}
\newcommand{\Dsm}{\check\Ds}
\newcommand{\Lv}{L}
\newcommand{\restr}[1]{|_{#1}^{\vphantom x}}
\newcommand{\Restr}[1]{\Big|_{#1}}
\newcommand{\ip}[1]{\langle #1 \rangle}
\newcommand{\mult}{^{\scriptstyle\times}}
\newcommand{\transp}{^\top}
\newcommand{\del}{\partial}
\renewcommand{\d}{\mathrmsl{d}}
\newcommand{\into}{\hookrightarrow}
\newcommand{\sub}{\subseteq}
\newcommand{\dsum}{\oplus}
\renewcommand{\emptyset}{\varnothing}
\newcommand{\st}{\mathrel{|}}
\newcommand{\eps}{\varepsilon}
\newcommand{\Lam}{{\mathrmsl\Lambda}}
\newcommand{\Sig}{{\mathrmsl\Sigma}}
\newcommand{\im}{\mathop{\mathrm{im}}\nolimits}
\newcommand{\diag}{\mathop{\mathrm{diag}}\nolimits}
\newcommand{\rank}{\mathop{\mathrm{rank}}\nolimits}
\newcommand{\grad}{\mathop{\mathrm{grad}}\nolimits}
\newcommand{\Hess}{\mathop{\mathrm{Hess}}}
\newcommand{\Hom}{\mathrm{Hom}}
\newcommand{\Stab}{\mathrm{Stab}}
\newcommand{\spns}{\mathrm{span}}
\newcommand{\Proj}{\mathrm P}
\newcommand{\Id}{\mathit{Id}}
\newcommand{\gM}{\cE}
\newcommand{\mm}{{\boldsymbol\sigma}}
\newcommand{\ang}{{\boldsymbol\vartheta}}
\newcommand{\angU}{{\boldsymbol\varphi}}
\newcommand{\bnu}{{\boldsymbol\nu}}
\newcommand{\bt}{{\mathbf t}}
\newcommand{\bG}{{\mathbf G}}
\newcommand{\bH}{{\mathbf H}}
\newcommand{\al}{\alpha}
\newcommand{\be}{\beta}
\newcommand{\ga}{\gamma}
\newcommand{\de}{\delta}
\newcommand{\lamc}{\lambda}
\newcommand{\Qt}{{\mathbf Q}}
\newcommand{\ins}{^\circ}
\newcommand{\gip}{\zeta}
\newcommand{\ibm}{\psi}
\newcommand{\afs}{\eps}
\newcommand{\As}{{\mathscr A}}
\newcommand{\Cb}{\Phi}
\newcommand{\Fan}{{\mathsf Y}}
\newcommand{\Pol}{\Delta}
\newcommand{\Fa}{F}
\newcommand{\afn}{w}
\newcommand{\ul}{{\mathbf u}}
\newcommand{\Ll}{{\mathbf L}}
\newcommand{\red}{^{\mathrm{red}}}
\newcommand{\fib}{^{\mathrm{tor}}}
\begin{document}

\title[Levi--K\"ahler reduction and toric geometry]
{Levi--K\"ahler reduction of CR structures,\\ 
products of spheres, and toric geometry}
\author[V. Apostolov]{Vestislav Apostolov}
\address{Vestislav Apostolov \\ D{\'e}partement de Math{\'e}matiques\\
UQAM\\ C.P. 8888 \\ Succursale Centre-ville \\ Montr{\'e}al (Qu{\'e}bec) \\
H3C 3P8 \\ Canada}
\email{apostolov.vestislav@uqam.ca}
\author[D.M.J. Calderbank]{David M. J. Calderbank}
\address{David M. J. Calderbank \\ Department of Mathematical Sciences\\
University of Bath\\ Bath BA2 7AY\\ UK}
\email{D.M.J.Calderbank@bath.ac.uk}
\author[P. Gauduchon]{Paul Gauduchon}
\address{Paul Gauduchon \\ Centre de Math\'ematiques\\
{\'E}cole Polytechnique \\ UMR 7640 du CNRS\\ 91128 Palaiseau \\ France}
\email{pg@math.polytechnique.fr}
\author[E. Legendre]{Eveline Legendre}
\address{Eveline Legendre\\ Universit\'e Paul Sabatier\\
Institut de Math\'ematiques de Toulouse\\ 118 route de Narbonne\\
31062 Toulouse\\ France}
\email{eveline.legendre@math.univ-toulouse.fr}
\thanks{V.A. is supported in part by an NSERC discovery grant. E.L. is partially supported by France ANR project EMARKS No ANR-14-CE25-0010. The authors are grateful to the Institute of Mathematics and Informatics of the Bulgarian Academy of Sciences, the London Mathematical Society, and the labex CIMI (Toulouse) for hospitality and financial support.}

\date{\today}
\begin{abstract} We study CR geometry in arbitrary codimension, and
introduce a process, which we call the Levi--K\"ahler quotient, for
constructing K\"ahler metrics from CR structures with a transverse torus
action.  Most of the paper is devoted to the study of Levi--K\"ahler quotients
of toric CR manifolds, and in particular, products of odd dimensional spheres.
We obtain explicit descriptions and characterizations of such quotients, and
find Levi--K\"ahler quotients of products of $3$-spheres which are extremal in
a weighted sense introduced by G.~Maschler and the first author~\cite{AM}.
\end{abstract}

\maketitle

\section*{Introduction}

In recent years there has been considerable interest in the interaction
between K\"ahler geometry and its odd-dimensional younger cousin, Sasaki
geometry. On the one hand, ideas in K\"ahler geometry, such as toric methods
or extremal metrics, have led to the development of analogues in Sasaki
geometry.  On the other hand, Sasaki manifolds have a canonical
$1$-dimensional foliation generated by the Reeb vector field, which both
provides a construction of K\"ahler metrics on the leaf space when the latter
is a manifold or orbifold, as well as a ``horizontal'' generalization of such
quotients when it is not.

Our thesis herein is that these ideas need not be limited to $1$-dimensional
foliations. Indeed, any Sasaki manifold has an underlying codimension one CR
structure, whereas CR manifolds arise naturally in arbitrary codimension. This
prompts us to introduce transverse ``Reeb foliations'' on arbitrary CR
manifolds $(N,\Ds,J)$ (a theory of such foliations has recently been developed
in~\cite{Meerss}, but here we focus on the horizontal K\"ahler geometry of
$(\Ds, J)$). However, whereas in codimension one, the exterior derivative of
the contact form equips the horizontal distribution $\Ds$ with a nondegenerate
$2$-form (which, together with the complex structure $J$ on $\Ds$, defines the
horizontal K\"ahler structure on $\Ds$), in higher codimension, the
non-integrability of $\Ds$ is measure by a $2$-form on $\Ds$ with values in
$TN/\Ds$, called the \emph{Levi form} $L_\Ds$. In order to construct a
K\"ahler metric on the leaf space of the Reeb foliation, we therefore need
also to choose a nondegenerate component of $L_\Ds$. This construction, which
we call a \emph{Levi--K\"ahler quotient}, is our main topic of study.

In particular, the Levi form of $\Ds$ must \emph{have} a nondegenerate
component. Rank $2m$ distributions $\Ds$ of this type, on manifolds $N$ of
dimension $2m+\ell$, were studied in a companion paper~\cite{tcg}, to which
the present work may be viewed as a sequel, although we do not here rely upon
knowledge of that paper, or its main results. Indeed, whereas in~\cite{tcg} we
study the general theory of toric contact manifolds in higher codimension, the
applications in K\"ahler geometry we develop herein use only the simplest
examples: toric CR submanifolds of flat space, and in particular, products of
spheres.

Our prime motivation is the construction of interesting toric K\"ahler
orbifolds $M$, which are K\"ahler orbifolds of real dimension $2m$ admitting a
isometric hamiltonian action of a real $m$-torus $\tor/2\pi\Lam$. As a
symplectic orbifold~\cite{Audin,Delzant,LT} or complex toric
variety~\cite{danilov,fulton}, $M$ is classified respectively by the image
$\Pol$ of its momentum map, which is a convex polytope in an $m$-dimensional
real affine space $\As$ modelled on $\tor^*$, or by the underlying fan $\Fan$
in $\tor$ of normal rays to the facets (codimension one faces) of
$\Delta$. Explicitly, $\Delta$ is an intersection of half-spaces $L_s\geq 0$,
where $s\in\cS$ indexes the facets of $\Delta$, and $L_s\in\torh$, the
$(m+1)$-dimensional vector space of affine functions on $\As$; the normals of
$\Fan$ are then the linear parts $u_s\in\tor$ of $L_s$ for $s\in\cS$. It is
convenient to encode these data in linear maps $\Ll\colon \R_\cS\to \torh$ and
$\ul\colon \R_\cS\to \tor$, where $\R_\cS$ is the standard real vector space
with a basis $e_s:s\in\cS$, $\Ll(e_s)=L_s$ and $\ul(e_s)=u_s$. The kernel $\g$
of $\ul$ is the Lie algebra of a subtorus $G$ of $\R_\cS/2\pi\Z_\cS$, and we
let $\lamc\in\g^*$ be the restriction of $\Ll$ to $\g$ (which takes values in
the constant affine functions, i.e., $\R$).  Equipping the complexification
$\C_\cS$ of $\R_\cS$ with its standard flat K\"ahler structure, $M$ is then
equivariantly symplectomorphic to the symplectic quotient of $\C_\cS$ by $G$
at momentum level $\lamc$~\cite{Delzant,LT}, and equivariantly biholomorphic
to a GIT quotient of $\C_\cS$ by the complexification of $G$~\cite{Cox}.

However, $M$ need not be isometric to the K\"ahler quotient of $\C_\cS$ by
$G$, and the K\"ahler quotient metric, known as the Guillemin
metric~\cite{Guillemin}, has not been found to be particularly interesting,
except in the simplest cases. We are thus motivated to make the following
observations. First, the level set $\mu_\g^{-1}(\lamc)$ of the momentum map of
$G$ is a toric CR submanifold $N_{\g,\lamc}$ of $\C_\cS$, in fact an
intersection of quadric hypersurfaces. Secondly, since $G$ preserves
$N_{\g,\lamc}$ with orbits transverse to the CR distribution $\Ds$, we may
regard the Levi form as a $\g$-valued $2$-form on $\Ds$. We find that the
$\lamc$ component is nondegenerate and induces a K\"ahler metric on $M\cong
N_{\g,\lamc}/G$---in other words, the data $(\g,\lamc)$ is exactly what we
need to define a Levi--K\"ahler quotient of $N_{\g,\lamc}$. While this metric
does not seem to be particularly interesting either, there is a third
observation that proves to be decisive: we do not need to use the same pair
$(\g,\lamc)$ to define the toric CR submanifold $N$ as we use to take the
Levi--K\"ahler quotient. Indeed, $(\g,\lamc)$ determines a Levi--K\"ahler
quotient of $N_{\g_o,\lamc_o}$ biholomorphic to $M$ provided only that the
data $(\g_o,\lamc_o)$ arises from a polytope $\Pol_o$ with the same
combinatorial type as $\Pol$.

We take advantage of these observations by using the principle that a
Levi--K\"ahler quotient of a CR manifold $N$ can be expected to have nice
curvature properties if $N$ does. The simplest examples, in codimension one,
are round CR $(2m+1)$-spheres, which are the toric CR submanifolds associated
to $m$-simplices, and are circle bundles over complex projective spaces.
Products of such spheres provide examples in higher codimension. Thus if
$\Pol$ is a polytope with the same combinatorial type as a product of
simplices, we obtain a distinguished toric K\"ahler metric on the toric
symplectic orbifold associated to $\Pol$ as a Levi--K\"ahler quotient of a
suitable product of spheres.

The main results come in three courses, which we serve up in
Sections~\ref{s:lkr-toric},~\ref{s:lkr-sph} and~\ref{s:c-lkq}, after
presenting some background and preliminary results in Section~\ref{s:lkq}.
The preliminary material reviews the notion of a CR $(2m+\ell)$-manifold of
codimension $\ell$ and studies local CR torus actions transverse to the CR
distribution. These have associated K\"ahler cones of dimension $2(m+\ell)$
and so may be viewed as a natural generalization of Sasaki structures.  Such
a transverse local action of an $\ell$-dimensional abelian Lie algebra $\g$,
together with an element $\lamc\in\g^*$ is called a \emph{positive Levi pair}
if it defines a horizontal K\"ahler structure on the CR distribution. We
obtain from this the Levi--K\"ahler quotient construction when the action of
$G$ integrates to an action of a Lie group $G$.

Section~\ref{s:lkr-toric} presents our general results on toric CR
submanifolds of $\C_\cS$ and their Levi--K\"ahler quotients, thus establishing
the observations made above. To do this, we first review the elements of toric
geometry and combinatorics, and make precise the notion of combinatorial type.
Then we show in Theorem~\ref{t:toric} that if $(\g,\lamc)$ is a positive Levi
pair associated to polytope $\Pol$ and $N$ is a toric submanifold with the
same combinatorial type, then the image of the momentum map of the horizontal
Levi--K\"ahler structure (or the Levi--K\"ahler quotient when that exists) has
image $\Pol$. Here we need a small part of the toric contact theory developed
in~\cite{tcg}, which we summarize in Theorem~\ref{t:polytope}. In the case
that $N$ is toric intersection of quadric hypersurfaces contained in a round
hypersphere, the Levi--K\"ahler structure can be made explicit, as we show in
Theorem~\ref{t:sph-red-metric}.

In Section~\ref{s:lkr-sph}, we study the construction of toric K\"ahler
metrics as Levi--K\"ahler quotients of products of spheres. Building on
Theorem~\ref{t:toric}, we characterize such quotients in
Theorem~\ref{t:product-spheres} as those associated to a polytope with the
combinatorial type of a product of simplicies. In turn,
Theorem~\ref{t:symplectic-potential} builds on Theorem~\ref{t:sph-red-metric}
by giving explicit formulae for the Levi--K\"ahler quotients of products of
spheres and their symplectic and K\"ahler potentials. In the remainder of the
section, we explore relations between this construction and other explicit
methods in K\"ahler geometry. The explicit form of certain Levi--K\"ahler
quotients of products of $3$-spheres motivates a new ansatz for toric K\"ahler
metrics whose Delzant polytope is projectively equivalent to a cube, extending
the ambitoric ansatz of Segre type~\cite{ACG1,ACG2} to arbitrary dimension.

Another source of toric K\"ahler orbifolds $M$ whose Delzant polytope has the
combinatorics of a product of simplices can be obtained from the generalized
Calabi construction, where both the base and the fibre are toric orbifolds
with Delzant polytopes having the combinatorics of product of simplices
(see~\cite{hfkg5}). This includes the complex Hirzebruch surfaces, or more
generally any holomorphic projective bundle over a projective space, and,
inductively, toric fibrations where the base and the fibre are one of the
mentioned smooth complex manifolds.  We show that in this setting, the
K\"ahler metric corresponding to the Levi--Kahler quotient of the product of
spheres associated to $M$ is obtained from the generalized Calabi
construction, where the metrics on the base and on the fibre are themselves
Levi--Kahler quotients of product of spheres.

The final tranche of results concern curvature properties of a Levi--K\"ahler
quotient $M$ of a product of spheres $N$. The product structure on $N$ induces
distributions on $M$ and we show that the curvature of $M$ has vanishing
Bochner component on each such distribution, simply because CR spheres have
vanishing Chern--Moser tensor. For $3$-dimensional CR manifolds, the vanishing
of the Chern--Moser tensor is automatic. We compute instead the scalar
curvature of a Levi--K\"ahler quotient of a product of $3$-spheres and observe
that when the polytope is projectively equivalent to a cube, the
Levi--K\"ahler quotient is extremal in a weighted sense that was introduced
(in a special case) in~\cite{AM}.

More precisely, given a ``conformal dimension'' $p\in\R$ and a positive
function $w$ on a compact symplectic orbifold $(M,\omega)$ whose hamiltonian
vector field is quasiperiodic (i.e., it belongs to the Lie algebra of a torus
$\Ab[]$ in $\mathrm{Ham}(M,\omega)$), we can generalize the approach of
Donaldson~\cite{donaldson} and Fujiki~\cite{fujiki} to Calabi's extremal
K\"ahler metrics~\cite{calabi} by using $w^{-(p-1)}$ as a weight for the
formal Frech\'et symplectic structure on the space of $\T$-invariant
compatible complex structures. Then the action of
$\mathrm{Ham}^{\Ab[]}(M,\omega)$ on this space is hamiltonian, and if we
weight the inner product on its Lie algebra of by $w^{-(p+1)}$ then the
momentum map at $J$ may be identified with a modification $s_{J,w,p}$ of the
scalar curvature of $g_J=\omega(\cdot,J\cdot)$, which is what the scalar
curvature of the conformally related metric $w^2 g_J$ would be if $M$ had
dimension $p$.

If the polytope $\Pol$ of $M$ is projectively equivalent to a cube, then
opposite facets of $\Pol$ meet in coplanar lines, so there is a unique affine
function $w$ up to scale which is positive on $\Pol$ and vanishes on the
intersection of opposite facets. We prove in Theorem~\ref{t:LK-EM} that the
unique (up to scale) toric metric on $(M,\omega)$ for which $s_{J,w,m+2}$ is
an affine function is the one arising as a Levi--K\"ahler quotient of a
product of $3$-spheres.

The Levi--K\"ahler quotient metrics of products of $\ell$ odd dimensional
spheres do not lead to new extremal K\"ahler metrics (in the classical sense)
unless $\ell\leq 2$.  In the case $\ell =1$, the quotient of a sphere are
Bryant's Bochner-flat K\"ahler metrics on weighted projective
spaces~\cite{Bryant}, which are extremal. We end the paper with some new
extremal examples obtained as the quotient of a product of two spheres.

\section{Levi--K\"ahler quotients of CR manifolds}\label{s:lkq}

\subsection{CR structures of arbitrary codimension}

\begin{defn} A \emph{CR structure $(\Ds,J)$ of rank $m$ and codimension $\ell$}
on a real $(2m+\ell)$-dimensional manifold $N$ is a real rank $2m$
distribution $\Ds\sub TN$ equipped with an almost complex structure $J\colon
\Ds \to \Ds$, which satisfies the integrability conditions
\begin{equation}\label{eq:CR-integrability}
\begin{split}
[X,Y]- [JX,JY] &\in \Gamma(\Ds),  \\
[X,JY] + [JX,Y] &= J([X,Y] - [JX,JY]), \quad\forall\, X,Y\in \Gamma(\Ds),
\end{split} 
\end{equation}
or, equivalently,
\[
[\Gamma(\Ds^{1,0}), \Gamma(\Ds^{1,0})] \sub \Gamma(\Ds^{1,0}),
\]
where $\Ds^{1,0}\sub TN\otimes \C$ is the subbundle of $(1,0)$ vectors in
$\Ds\otimes\C$.

$(N,\Ds,J)$ is then called a \emph{CR manifold} (of codimension $\ell$).
\end{defn}

The underlying rank $2m$ distribution $\Ds$ on $N$ may be viewed as a
codimension $\ell$ generalization of a contact structure on $N$~\cite{tcg}.
The fundamental invariant of $\Ds$ is its \emph{Levi form} $\Lv_\Ds\colon
\Wedge^2 \Ds\to TN/\Ds$, defined, via $X,Y\in \Gamma(\Ds)$, by the tensorial
expression
\begin{equation}\label{eq:levi-form}
\Lv_\Ds(X,Y) = - q_\Ds([X,Y])
\end{equation}
where $q_\Ds\colon TN\to TN/\Ds$ is the quotient map. The transpose of $q_\Ds$
identifies $(TN/\Ds)^*$ canonically with the annihilator $\Ds^0$ of $\Ds$,
which is a rank $\ell$ subbundle of $T^*N$. The normalization convention for
$\Lv_\Ds$ is chosen so that for any section $\al$ of $\Ds^0$, the restriction
of $\d\al$ to $\Wedge^2\Ds\sub \Wedge^2TN$ is $\al\circ \Lv_\Ds$.

The \emph{nondegeneracy locus} of $\Ds$ is the open subset $U_\Ds=\{\al\in
\Ds^0\cong (TN/\Ds)^*\st \al\circ \Lv_\Ds$ is nondegenerate$\}$ of $\Ds^0$.
If $U_\Ds\cap \Ds^0_z$ is nonempty then, since nondegeneracy is an open
condition, $\Ds^0_z$ has a basis $\al_1,\ldots \al_\ell$ in $U_\Ds$ and so
$U_\Ds\cap \Ds^0_z$ is the complement of the set where $(\sum_{i=1}^\ell
t_i\al_i)\circ\Lv_\Ds$ degenerates, which is the cone over a projective
hypersurface $V_{\Ds,z}$ of degree $m$ (the zero set of a homogeneous degree
$m$ polynomial in the $\ell$ variables $t_1,\ldots t_\ell$). In~\cite{tcg},
$V_\Ds\sub\Proj(\Ds^0)$ is called the \emph{degeneracy variety} of $\Ds$.
Therein it is shown that $U_\Ds$ is a canonical ``symplectization'' of
$(N,\Ds)$: $U_\Ds$ is the open subset of $\Ds^0$ over which the pullback of
the canonical symplectic form $\Omega$ on $T^*N$ to $\Ds^0$ is nondegenerate.

\begin{defn} Let $(N,\Ds,J)$ be a \emph{CR manifold} (of codimension $\ell$).
We say $\Ds$ is \emph{Levi nondegenerate} if $U_\Ds$ has nonempty intersection
with each fibre of $p\colon \Ds^0\to N$.  A (local) section of $U_\Ds$ is
called a (local) \emph{contact form} on $N$.

Note that the Levi form $\Lv_\Ds$ satisfies
\begin{equation*}
\Lv_\Ds(X,Y) = - \tfrac12 q_\Ds([X,Y]+[JX,JY])
\end{equation*}
and hence is $J$-invariant or ``type (1,1)'' on $\Ds$. It follows that
$h_\Ds(X,Y):= \Lv_\Ds(X,JY)$ is a section of $S^2\Ds^*\otimes TN/\Ds$. We say
$(N,\Ds,J)$ is \emph{Levi definite} if at each $z\in N$ there exists $\al\in
\Ds^0_z$ such that $\al\circ h_\Ds\in S^2\Ds^*_z$ is positive definite.
\end{defn}

Clearly Levi definite CR manifolds are Levi nondegenerate: more generally
$U_\Ds^+:=\{\al\in \Ds^0\st \al\circ h_\Ds$ is positive definite$\}$ is an
open and closed submanifold of $U_\Ds$.

\begin{exs} \begin{numlist}
\item A maximally real codimension $\ell$ submanifold of $\C^{m+\ell}$ is
a smooth submanifold $N\sub\C^{m+\ell}$ for which $\Ds:= TN \cap JTN$, where
$J$ is the standard complex structure of $\C^{m+\ell}$, has rank $2m$ (i.e.,
corank $\ell$ in $TN$). Then $(N,\Ds)$, with the induced action of $J$ on
$\Ds$, is a CR manifold of rank $m$ and codimension $\ell$. A model example,
in codimension one, is the unit sphere $\Sph^{2m+1}$ in $\C^{m+1}$.
\item If $(N_i,\Ds_i,J_i)$ are CR manifolds, with codimensions $\ell_i$,
for $i\in\{1,\ldots n\}$, then so is $(\prod_{i=1}^n N_i, \Ds_1\dsum\cdots\dsum
\Ds_n,J_1\dsum\cdots\dsum J_n)$, with codimension $\ell=\ell_1+\cdots +\ell_n$
and $U_\Ds=\prod_{i=1}^n U_{\Ds_i}$. In particular, the product of $n=\ell$
codimension one CR spheres $\Sph^{2m_1+1}\times \cdots \times \Sph^{2m_\ell+1}$
is a CR manifold with codimension $\ell$.
\end{numlist}
\end{exs}

\begin{rem} The Levi form of a CR manifold $(N,\Ds,J)$ is traditionally
defined to be the hermitian form $h_\Ds+i\Lv_\Ds\colon\Ds\times\Ds\to
\C\otimes TN/\Ds$; however it is uniquely determined (given $J$) by its real
or imaginary part, and the imaginary part is an invariant of the underlying
(real) distribution $\Ds$. The rank is usually called the \emph{CR dimension}.

Levi nondegeneracy implies that $X\mapsto\Lv_\Ds(X,\cdot)$ is an injective
bundle homomorphism from $\Ds$ to $\Hom(\Ds,TN/\Ds)$. This condition, together
with the assumption that $\Lv_\Ds$ is surjective onto $TN/\Ds$, appears in the
study~\cite{beloshapka} of CR automorphisms of real quadrics
$N_\sigma:=\{(z,w)\in \C^{m+\ell} \st \Im (w)=\Im \sigma(z,z)\}$, where
$\sigma\colon \C^m \times \C^m \to \C^{\ell}$ is hermitian and $\Im$ denotes
the imaginary part. Such quadrics are homogeneous CR manifolds of rank $m$ and
codimension $\ell$, with Levi form isomorphic to $\sigma$ (or $\Im \sigma$ in
our sense).

Levi definiteness extends the codimension one notion of strict
pseudoconvexity.
\end{rem}

\subsection{Local CR actions, generalized Sasaki structures and K\"ahler
cones}

\begin{defn} Let $(N,\Ds,J)$ be a CR manifold; then the space $\crv(N,\Ds,J)$
of \emph{CR vector fields} is the Lie subalgebra of vector fields $X$ on $N$
such that
\begin{equation}
\cL_X \Gamma(\Ds)\sub\Gamma(\Ds) \quad\text{and}\quad \cL_X J =0.
\end{equation}
A (\emph{local, effective}) \emph{CR action} of a Lie algebra $\g$ on
$(N,\Ds,J)$ is a Lie algebra monomorphism $\bK\colon\g\to\crv(N,\Ds,J)$.  For
$v\in\g$, we write $K_v$ for the induced vector field $\bK(v)$, and we define
$\kappa^\g\colon N\times\g\to TN$ by $\kappa^\g(z,v)=K_{v,z}$. Let $\Rb\g\sub
TN$ be the image of $\kappa^\g$, i.e., ${\Rb\g}_z:=\spns\{K_{v,z}\st
v\in\g\}$.  Since $\bK\colon\g\to\crv(N,\Ds,J)$ is a Lie algebra morphism,
$\Rb\g$ is an integrable distribution.
\end{defn}

\begin{ex}\label{cr-pb} Let $\pi\colon N\to M$ be a principal $G$-bundle
with connection $\eta\colon TN\to \g$, where $\dim G=\ell$ and $\dim M=2m$.
Then $\Ds:=\ker\eta$ is a rank $2m$ distribution on $N$, and $\eta$ induces a
bundle isomorphism of $TN/\Ds$ with $N\times\g$.  In this trivialization, the
Levi form of $\Ds$ is $\d\eta+\frac12[\eta\wedge\eta]_\g$, the pullback to $N$
of the curvature $F^\eta$ of $\eta$. If $M$ has an (integrable) complex
structure $J$ for which $F^\eta$ is $J$-invariant, the horizontal lift of $J$
to $\Ds$ equips $N$ with a $G$-invariant CR structure. For $G$ abelian, this
is the principal torus bundle construction of \cite{wang-ziller}.
\end{ex}
Suppose $\bK\colon\g\to\crv(N,\Ds,J)$ is a local CR action, where $(N,\Ds,J)$
is CR of codimension $\ell=\dim\g$. Abstracting the local geometry of
Example~\ref{cr-pb}, we say the action of $\g$ is \emph{transversal} if the
following condition holds.

\begin{cond}\label{cond:trans} At every point of $N$, $\Ds+\Rb\g=TN$.
Equivalently\textup:
\begin{numlist}
\item $\rank\Rb\g=\ell$ everywhere on $N$\textup;
\item $\Ds\cap\Rb\g$ is the zero section of $TN$ (and thus
  $TN=\Ds\oplus\Rb\g$).
\end{numlist}
\end{cond}
The composite $q_\Ds\circ\kappa^\g\colon N\times\g\to\Rb\g\to TN/\Ds$ is a
bundle isomorphism and so there is a canonically defined $1$-form
$\eta^\g\colon TN\to \g$, characterized by
\begin{equation*}
\ker\eta^\g=\Ds\quad \text{and}\quad \forall\, v\in \g, \quad \eta^\g(K_v)=v. 
\end{equation*}
We also denote by $\eta^\g$ the induced map from $TN/\Ds$ to $\g$.
For any $\lamc\in\g^*$, define $\eta^\lamc=\eta^{\g,\lamc}\colon
N\to\Ds^0$ by $\eta^\lamc_z(X) =\ip{\eta_z(X),\lamc}$, so that
$\eta^\lamc(K_v)=\lamc(v)$ and
$\d\eta^\lamc\restr{\Ds}=\ip{\d\eta\restr{\Ds},\lamc}=\eta^\lamc\circ\Lv_\Ds$
is the $\lamc$-component of the Levi form of $\Ds$.

If $\bK$ integrates to an action of a connected Lie group $G$ on $N$, then
Condition~\ref{cond:trans}(i) implies that the $G$-action is locally free, so
that $M:=N/G$ is a compact orbifold. Condition~\ref{cond:trans}(ii) then
ensures that $\Ds$ is isomorphic to the pullback of $TM$ to $N$, and hence
$G$-invariant data on $\Ds$ descend to $M$. Invariant components of the Levi
form provide examples of such data.

In codimension one, a transversal CR action is essentially a CR Reeb vector
field, or equivalently, a compatible Sasaki structure, which makes the
symplectic cone K\"ahler.

To generalize this to arbitrary codimension, note that the total space $\Ds^0$
of $p\colon \Ds^0\to N$ inherits from $T^*N$ a \emph{tautological $1$-form}
$\tau$: using the exact sequence
\begin{equation}\label{eq:vb-es}
0\to p^*\Ds^0\to T\Ds^0\stackrel{p_*}{\to} p^*TN\to 0,
\end{equation}
$\tau_\al=\al\circ p_*\colon T_\alpha \Ds^0\to \R$ for any $\al\in\Ds^0$.  We
set $\Omega^\Ds=\d\tau$; this is the pullback of the tautological symplectic
form on $T^*N$ to $\Ds^0$.

Any $X\in\Gamma(TN)$ has a lift to a hamiltonian vector field $\tilde X$ on
$T^*N$ with $p_*(\tilde X)=X$ and hamiltonian $f_X=\tau(\tilde X)$, i.e.,
$f_X(\alpha)=\alpha(X)$; furthermore $\{f_X,f_Y\}=f_{[X,Y]}$.  (Explicitly,
$\d f_X=-\Omega^\Ds(\tilde X,\cdot)$, where $\tilde X_\al =
\al_*(X_z)-(\cL_X\al)_z$ for any extension of $\al\in T^*_zN$ to a local
section.) If $X\in\crv(N,\Ds,J)$, then $\tilde X$ is tangent to $\Ds^0\sub
T^*N$.

\begin{obs}\label{o:lift-ham-act} Let $\bK\colon\g\to\crv(N,\Ds,J)$ be a
local CR action of $\g$ on $(N,\Ds)$ and define $\mu_\g\colon\Ds^0\to
\g^*$ by $\ip{\mu_\g(\al),v}=\al(K_v)$ for $\al\in\Ds^0$ and $v\in \g$. Then
the lift of $\bK$ to $T^*N$ preserves $\Ds^0$, and the induced local action
$\tilde\bK$ is hamiltonian on $U_\Ds$ with momentum map
$\mu_\g\restr{U_\Ds}$\textup; in particular $\ip{\d\mu_\g(\tilde
  K_v),w}=-\ip{\mu_\g,[v,w]_\g}$ for all $v,w\in\g$.
\end{obs}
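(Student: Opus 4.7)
My plan is to break the observation into its three pieces and handle each via the general theory of cotangent lifts, specialized to the subbundle $\Ds^0 \subset T^*N$.

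First, I would dispose of the statement that $\tilde\bK$ preserves $\Ds^0$. This is essentially functorial: any local diffeomorphism of $N$ preserving $\Ds$ lifts to one on $T^*N$ preserving the annihilator subbundle $\Ds^0$. Differentiating, the cotangent lift of any $X\in\crv(N,\Ds,J)$ is tangent to $\Ds^0$, which is exactly the remark already made in the paragraph preceding the observation; applying this to $X=K_v$ for $v\in\g$ gives the first claim.

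Next, I would verify the hamiltonian assertion on $U_\Ds$. On $T^*N$ with its canonical symplectic form, each cotangent-lifted vector field $\tilde X$ is hamiltonian with hamiltonian $f_X(\alpha)=\alpha(X)$, as the text recalls via the identity $\d f_X=-\Omega^\Ds(\tilde X,\cdot)$. Pulling back along the inclusion $\Ds^0\hookrightarrow T^*N$ and using that $\tilde K_v$ is already tangent to $\Ds^0$, the same identity holds on $\Ds^0$ with $\Omega^\Ds$ in place of the canonical form. Now for $\alpha\in\Ds^0$, the function $f_{K_v}(\alpha)=\alpha(K_v)=\ip{\mu_\g(\alpha),v}$ is precisely $\ip{\mu_\g,v}$ by definition, so on the open set $U_\Ds\subseteq\Ds^0$ where $\Omega^\Ds$ is nondegenerate we can invoke uniqueness of hamiltonian vector fields to conclude that $\mu_\g\restr{U_\Ds}$ is a momentum map for $\tilde\bK$.

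For the infinitesimal equivariance identity, the key input is the standard Poisson bracket relation $\{f_X,f_Y\}=f_{[X,Y]}$ for cotangent lifts, quoted in the preceding paragraph. Since $\bK\colon\g\to\crv(N,\Ds,J)$ is a Lie algebra morphism, $[K_v,K_w]=K_{[v,w]_\g}$, so $\{f_{K_v},f_{K_w}\}=f_{K_{[v,w]_\g}}=\ip{\mu_\g,[v,w]_\g}$. Unfolding the left-hand side with the convention $\d f_{K_w}=-\Omega^\Ds(\tilde K_w,\cdot)$ gives $\tilde K_v(f_{K_w})=\d f_{K_w}(\tilde K_v)=\ip{\d\mu_\g(\tilde K_v),w}$ up to the expected sign, yielding the stated formula $\ip{\d\mu_\g(\tilde K_v),w}=-\ip{\mu_\g,[v,w]_\g}$.

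The only genuine obstacle is bookkeeping of sign conventions (hamiltonian versus Poisson versus momentum-map signs); there is no real geometric difficulty, since everything reduces to the classical fact that the cotangent lift of any Lie algebra action is equivariantly hamiltonian on $T^*N$, and we merely restrict this structure to the invariant submanifold $\Ds^0$ and its symplectic open subset $U_\Ds$.
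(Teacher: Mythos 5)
Your proposal is correct and follows essentially the same route as the paper, which simply declares the observation ``immediate'' on the strength of the facts you invoke: the cotangent lift of a CR vector field is tangent to $\Ds^0$, $\tilde X$ is hamiltonian with $f_X(\al)=\al(X)$ so that $f_{K_v}=\ip{\mu_\g,v}$, and $\{f_X,f_Y\}=f_{[X,Y]}$ gives the equivariance identity. Your spelled-out restriction to $U_\Ds$ and the remark on sign conventions are just the bookkeeping the paper leaves implicit.
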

This is immediate. Now $(p,\mu_\g)\colon \Ds^0\to N\times \g^*$ is a bundle
isomorphism with inverse $\ibm_\g(z,\lamc):=\ip{\eta_z,\lamc}$: if
$\al=\ip{\eta_z,\lamc}$ for some $\lamc\in\g^*$ and $z\in N$, then
$\mu_\g(\al)=\lamc$.

\begin{lemma}\label{l:pb-sympc} Let $\tau$ be the tautological $1$-form on
$\Ds^0$. Then
\begin{equation}
(\ibm_\g^*\tau)_{(z,\lamc)}(X+a)=\ip{\eta(X),\lamc},
\end{equation}
and hence
\begin{equation}\label{eq:om-g}
  (\ibm_\g^*\Omega^\Ds)_{(z,\lamc)}(X+a,Y+b) =
  \ip{a, \eta (Y)} - \ip{ b, \eta (X)} + \ip{\d \eta (X, Y),\lamc}.
\end{equation}
\end{lemma}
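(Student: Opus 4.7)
The plan is to verify the stated expression for $\ibm_\g^*\tau$ directly from the definition of the tautological $1$-form, and then obtain $\ibm_\g^*\Omega^\Ds$ as its exterior derivative, exploiting the product structure of $N\times\g^*$ to split tangent vectors into horizontal and vertical parts.

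The first identity is essentially tautological. Since $p\circ\ibm_\g\colon N\times\g^*\to N$ is projection onto the first factor, the composite $p_*\circ(\ibm_\g)_*$ sends $X+a\in T_{(z,\lamc)}(N\times\g^*)$ to $X$. Applying the defining relation $\tau_\al=\al\circ p_*$ with $\al=\ibm_\g(z,\lamc)=\ip{\eta_z,\lamc}$ then immediately yields $(\ibm_\g^*\tau)_{(z,\lamc)}(X+a)=\ip{\eta_z(X),\lamc}$.

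For the second formula I would set $\alpha:=\ibm_\g^*\tau$ and compute $\d\alpha$ by the Cartan formula $\d\alpha(V,W)=V\cdot\alpha(W)-W\cdot\alpha(V)-\alpha([V,W])$ on vector fields adapted to the product. Extend $X,Y\in T_zN$ to local vector fields on $N$ and lift them horizontally, while viewing $a,b\in\g^*$ as constant (vertical) vector fields on $N\times\g^*$; then every Lie bracket involving a vertical field vanishes. Three contributions arise. First, for two horizontal lifts, $\alpha$ evaluated on the second reduces to the function $\ip{\eta(Y),\lamc}$, and since $\lamc$ is constant along horizontal directions, the Cartan formula collapses to the familiar $\d\eta$ identity on $N$ paired with $\lamc$, giving $\ip{\d\eta(X,Y),\lamc}$. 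Second, the mixed horizontal-vertical terms use that $\alpha((X,0))=\ip{\eta(X),\lamc}$ is linear in $\lamc$, so its vertical derivative in direction $b$ is $\ip{\eta(X),b}$; with the proper signs, antisymmetrisation produces $\ip{a,\eta(Y)}-\ip{b,\eta(X)}$. Third, two vertical fields contribute nothing since $\alpha$ annihilates vertical vectors. Summing the three contributions gives \eqref{eq:om-g}.

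There is no serious obstacle to this plan; the main care lies in keeping track of the horizontal/vertical decomposition and in recognising that the linearity of $\alpha$ in the fibre coordinate $\lamc$ is precisely what produces the cross pairings $\ip{a,\eta(Y)}$ and $\ip{b,\eta(X)}$, whereas only the horizontal--horizontal part invokes the differential on $N$ via $\d\eta$.
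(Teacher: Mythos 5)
Your proposal is correct and takes essentially the same route as the paper: the first identity is verified exactly as in the text (compose $\tau_\al=\al\circ p_*$ with $(\ibm_\g)_*$ and note $p\circ\ibm_\g=p_1$), and the second is obtained by differentiating the same $1$-form $\ibm_\g^*\tau=\ip{p_2,p_1^*\eta}$. The only cosmetic difference is that the paper computes $\d\ip{p_2,p_1^*\eta}$ by the Leibniz rule, whereas you evaluate $\d(\ibm_\g^*\tau)$ via the Cartan formula on adapted product vector fields (horizontal lifts and constant vertical fields, whose mutual brackets vanish); both computations produce the three terms of \eqref{eq:om-g}.
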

\begin{proof} Since $\tau_\al(Z)=\al(p_*(Z))$, $(\ibm_\g^*\tau)_{(z,\lamc)}(X+a)
=\tau_{\ip{\eta_z,\lamc}}\bigl((\ibm_\g)_*(X+a)\bigr) =\ip{\eta(X),\lamc}$.
Hence $\ibm_\g^*\tau=\ip{p_2,p_1^*\eta}$, where $p_1$ and $p_2$ are the first
and second projections of $N\times\g^*$. Now $\ibm_\g^*\Omega^\Ds=\ibm_\g^*\d\tau
=\d(\ibm_\g^*\tau)=\ip{\d p_2\wedge p_1^*\eta}+\ip{p_2,p_1^*\d\eta}$, which
yields~\eqref{eq:om-g}
\end{proof}

For $(z,v)\in N\times \g$, let $\cJ_{(z,v)}$ be the complex structure on
$T_{(z,v)} (N\times\g)=T_z N\dsum\g$ defined by
\begin{equation*}
\cJ_{(z,v)}(X+w)=J X^\Ds + K_{w,z}-\eta(X)
\end{equation*}
where $X^\Ds$ denotes the $\Ds_z$-component of $X\in T_z N=\Ds_z\dsum
(\Rb\g)_z$.

\begin{lemma}\label{l:sympc-int}  The almost complex structure $\cJ$ is
integrable if and only if $\g$ is abelian.
\end{lemma}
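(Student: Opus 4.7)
The plan is to apply the Newlander--Nirenberg theorem: $\cJ$ is integrable if and only if its $(+i)$-eigenspace $T^{1,0}\sub T(N\times\g)\otimes\C$ is involutive under Lie bracket. I would begin by using the decomposition $T_{(z,v)}(N\times\g)=\Ds_z\dsum\Rb\g_z\dsum\g$ to describe $\cJ$ summand by summand: it acts as $J$ on $\Ds$, as $-\eta\colon\Rb\g\to\g$ on the second summand, and as $w\mapsto K_{w,z}$ on the third. A direct computation of the $(+i)$-eigenvectors then identifies
\[
T^{1,0}_{(z,v)}=\Ds^{1,0}_z\dsum\{K_{w,z}+iw\st w\in\g\otimes\C\}.
\]

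Next I would verify involutivity on three natural families of generating sections. For $X,Y\in\Gamma(\Ds^{1,0})$, viewed as vector fields on $N\times\g$ with zero $\g$-component, the bracket $[X,Y]$ lies in $\Gamma(\Ds^{1,0})$ by the CR integrability of $(N,\Ds,J)$. For $v\in\g\otimes\C$ and $X\in\Gamma(\Ds^{1,0})$, the section $K_v+iv$ has constant $\g$-component, so $[K_v+iv,X]=[K_v,X]$; because $K_v\in\crv(N,\Ds,J)$ preserves both $\Ds$ and $J$, this bracket again lies in $\Gamma(\Ds^{1,0})$. These first two cases hold unconditionally, so they place no constraint on $\g$.

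The decisive case is the bracket of two sections of the form $K_v+iv$. Since the constant fields on $\g$ commute with each other and with all lifts of vector fields from $N$, for $v,w\in\g\otimes\C$ we obtain
\[
[K_v+iv,\,K_w+iw]=[K_v,K_w]=K_{[v,w]_\g},
\]
where the second equality uses that $\bK$ is a Lie algebra morphism. For $K_{[v,w]_\g}$ to lie in $T^{1,0}$ it would have to equal $K_u+iu$ for some $u\in\g\otimes\C$; matching $\g$-components forces $u=0$, and then injectivity of $\bK$ forces $[v,w]_\g=0$. Hence $\cJ$ is integrable if and only if $\g$ is abelian, with the converse direction following at once from the same three computations.

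The only subtlety is getting $T^{1,0}$ right and being careful with the lifts: the horizontal lifts of $K_v$ on $N$ and the constant vertical fields $v$ on $\g$ must be kept strictly separate, and their mutual commutation invoked explicitly when computing brackets on $N\times\g$. Once this bookkeeping is in place, the lemma reduces cleanly to the Lie-algebra morphism property of $\bK$ combined with the CR integrability of $(N,\Ds,J)$.
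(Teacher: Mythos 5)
Your proof is correct and follows essentially the same route as the paper: the paper checks vanishing of the Nijenhuis tensor on the same spanning families (fields with constant $\eta$-component and the constant fields from $\g$), and in both arguments the only obstruction is the bracket $[K_v,K_w]=K_{[v,w]_\g}$, which vanishes precisely when $\g$ is abelian, using injectivity of $\bK$ together with transversality $\Ds\cap\Rb\g=0$ when matching components in your decomposition. The only difference is one of formulation: you verify involutivity of the $+i$-eigenspace $T^{1,0}=\Ds^{1,0}\oplus\{K_w+iw \st w\in\g\otimes\C\}$, whereas the paper computes $N_\cJ$ directly on real vector fields.
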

\begin{proof} Let $X,Y$ be vector fields on $N$ with $\eta(X)$ and $\eta(Y)$
  constant and let $u,v\in \g^*$; as vector fields on $N\times \g$, these are
  constant in the $\g$ direction. First observe that
\begin{align*}
N_\cJ(X,Y) &=[X,\cJ Y] + [\cJ X,Y] - \cJ([X,Y] - [\cJ X,\cJ Y])\\
&=[X,JY^\Ds]+[JX^\Ds,Y]- J [X,Y]^\Ds + \eta([X,Y]) + \cJ [JX^\Ds,JY^\Ds]=0,
\end{align*}
since $[X,Y]^\Ds=[X^\Ds,Y^\Ds]+[K_{\eta(X)},Y^\Ds]+[X^\Ds,K_{\eta(Y)}]$.  Next
\begin{align*}
\cJ N_\cJ(u,Y)&=\cJ([u,\cJ Y]+[\cJ u,Y]) + [u,Y]-[\cJ u,\cJ Y]\\
&=J [K_{u},Y]^\Ds-\eta([K_{u},Y])
- [K_{u}, JY^\Ds-\eta(Y)] = 0.
\end{align*}
Finally,
\begin{align*}
\cJ N_\cJ(u,v)&=
\cJ([u,\cJ v]+[\cJ u, v]) + [u,v]-[\cJ u,\cJ v]\\
&= [K_{u},K_{v}] = K_{[u,v]},
\end{align*}
which vanishes for all $u,v\in\g$ iff $\g$ is abelian.
\end{proof}

Using $\eta$ to identify $\Rb\g$ with $N\times\g$, we observe that
\begin{equation} \label{eq:TN-decomp}
T_{(z,\lamc)} (N \times\g^*) \cong \Ds_z \oplus \g \oplus \g^*
\quad\text{and}\quad
T_{(z,v)} (N \times \g) \cong  \Ds_z \oplus \g \oplus \g;
\end{equation}
in these terms, $(\ibm_\g^*\Omega^\Ds)_{(z,\lamc)}$ is the sum of
$\ip{\d\eta_z,\lamc}$ and the standard symplectic structure on $\g \oplus
\g^*$, while $\cJ_{(z,v)}$ is the sum of the complex structure $J$ on $\Ds_z$
and the standard complex structure on $\g\oplus\g$. Thus if $\g$ is abelian
and we identify $N\times\g$ with $N\times\g^*$ using a symmetric positive
definite bilinear form $\gip$ on $\g$, we obtain a K\"ahler structure on the
open subset of $(z,\lamc)\in N\times\g^*$ with $\ibm_\g(z,\lamc)\in U_\Ds^+$.

\begin{prop} Let $(N,\Ds,J)$ be a codimension $\ell$ CR manifold, and $\g$ a
transversal CR action of an \textup($\ell$-dimensional\textup) abelian Lie
algebra $\g$ with a positive definite inner product.  Then $U_\Ds^+\sub
N\times \g^*$ has a canonical K\"ahler metric on which $\g$ defines a local
isometric hamiltonian action whose momentum map is the projection $p_2\colon
N\times\g^*\to \g^*$.
\end{prop}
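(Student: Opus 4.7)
The plan is to transfer the symplectic and complex structures already identified on $\Ds^0$ and $N\times\g$ (via the bundle isomorphism $\ibm_\g$ and the inner product $\gip\colon\g\cong\g^*$) to a single model $N\times\g^*$, and then verify that the resulting triple $(\omega,\cJ,g)$ defines a K\"ahler structure on $U_\Ds^+$ with momentum map $p_2$.

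First, I would equip $N\times\g^*$ with the $2$-form $\omega:=\ibm_\g^*\Omega^\Ds$ computed in Lemma~\ref{l:pb-sympc}, and the almost complex structure $\cJ$ of Lemma~\ref{l:sympc-int} transported from $N\times\g$ via the identification $\g^*\cong\g$ provided by $\gip$. Using the decomposition~\eqref{eq:TN-decomp}, both tensors split as sums of a piece on $\Ds_z$ and a piece on $\g\oplus\g^*$: on the $\g\oplus\g^*$ factor, $\omega$ is the standard symplectic form and $\cJ$ is the standard complex structure (dualized by $\gip$), while on $\Ds_z$, $\omega$ restricts to $\ip{\d\eta_z,\lamc}=\eta^\lamc\circ\Lv_\Ds$ and $\cJ$ to $J$.

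Next, I would verify the K\"ahler conditions one at a time. Integrability of $\cJ$ is immediate from Lemma~\ref{l:sympc-int} because $\g$ is abelian. Closedness of $\omega$ is automatic since $\omega=\d(\ibm_\g^*\tau)$. For the $\cJ$-invariance $\omega(\cJ\cdot,\cJ\cdot)=\omega(\cdot,\cdot)$, I would check it block-by-block: on $\g\oplus\g^*$ it holds by the standard compatibility of the canonical symplectic and complex structures; on $\Ds_z$ it reduces to the $J$-invariance of $\Lv_\Ds$ noted in the text; and there are no cross terms, since both $\omega$ and $\cJ$ preserve the splitting~\eqref{eq:TN-decomp}. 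Finally, for positive definiteness of $g(X,Y):=\omega(X,\cJ Y)$ on $U_\Ds^+$, on $\g\oplus\g^*$ one gets $\gip$ (or its dual) which is positive by hypothesis, and on $\Ds_z$ one gets $\eta^\lamc\circ h_\Ds$, which is positive definite precisely when $\ibm_\g(z,\lamc)=\ip{\eta_z,\lamc}\in U_\Ds^+$. So $g$ is Riemannian on $U_\Ds^+$ and $(g,\cJ,\omega)$ is K\"ahler there.

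For the hamiltonian action, I would invoke Observation~\ref{o:lift-ham-act}: the lift of $\bK$ to $\Ds^0$ is hamiltonian for $\Omega^\Ds$ with momentum map $\mu_\g$. Transporting through $\ibm_\g$, the induced local $\g$-action on $N\times\g^*$ preserves both $\omega$ and (being a product with the trivial action on $\g^*$) $\cJ$, hence is a local isometric hamiltonian action, and its momentum map is $\mu_\g\circ\ibm_\g=p_2$. I expect the main technical check to be the blockwise verification of $\cJ$-invariance of $\omega$ and positivity of $g$ on the $\Ds$ factor; everything else is formal once the splittings are aligned by $\gip$.
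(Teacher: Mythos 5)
Your proposal is correct and follows essentially the same route as the paper: the paper's proof likewise combines Lemma~\ref{l:pb-sympc} and Lemma~\ref{l:sympc-int} with the blockwise description of $\ibm_\g^*\Omega^\Ds$ and $\cJ$ in the splitting~\eqref{eq:TN-decomp} (with $\gip$ aligning $\g$ and $\g^*$), and then identifies the hamiltonian vector field of $\ip{p_2,v}$ with the pullback of $K_v$, which preserves $\cJ$ and hence the metric. You merely spell out the compatibility and positivity checks that the paper leaves implicit.
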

\begin{proof} Lemmas~\ref{l:pb-sympc} and~\ref{l:sympc-int} imply that $U_\Ds^+$
is K\"ahler. The hamiltonian vector field generated by the component
$\ip{p_2,v}$ of $p_2$ is the pullback of $K_v$, which clearly preserves the
complex structure $\cJ$, hence the metric $h$ on $N\times\g^*$.
\end{proof}
\begin{defn} Let $(N,\Ds,J)$ be Levi definite, and let $\g$ be
an ($\ell$-dimensional) abelian Lie algebra.  Then a \emph{$\g$-Sasaki
  structure} on $N$ is a transversal CR action of $\g$ together with a
positive definite inner product $\gip$; given such an action, we say
$(N,\Ds,J,\g,\gip)$ is a \emph{codimension $\ell$ Sasaki manifold} with
\emph{generalized K\"ahler cone} $U_\Ds^+$.
\end{defn}

\subsection{CR torus actions and Levi--K\"ahler quotients}\label{s:toric-CR}

\begin{defn} A local CR action $\bK\colon\ab[N]\to \crv(N,\Ds,J)$ of an abelian
Lie algebra $\ab[N]$ on a CR manifold $(N,\Ds,J)$ is called a \emph{local CR
  torus action}, and is said to be a \emph{CR torus action} of $\Ab[N]=
\ab[N]/2\pi\Lam_N$ if it integrates to an effective (i.e., faithful) action of
$\Ab[N]$. If $\bK\colon\g\to\crv(N,\Ds,J)$ satisfies
Condition~\ref{cond:trans}, we refer to $\Rb\g$ and its integral submanifolds
as the associated \emph{Reeb distribution} and \emph{Reeb foliation}
transverse to $\Ds$.
\end{defn}

Given a local CR torus action $\bK\colon\ab[N]\to \crv(N,\Ds,J)$, let
\begin{align*}
\kappa&:=\kappa^{\ab[N]}\colon N\times\ab[N]\to TN, && \text{with}&
\kappa(z,v)&=K_{v,z},\quad\text{and}\\
\mu&:=\mu_{\ab[N]}\colon\Ds^0\to \ab[N]^*,&& \text{with}&
\ip{\mu(\al),v}&=\al(K_v),
\end{align*}
so that $(p,\mu)\colon\Ds^0\to N\times\ab[N]^*$ is the pointwise transpose of
$q_\Ds\circ\kappa\colon N\times\ab[N]\to TN/\Ds$.

\begin{defn}\label{d:lkq} An $\ell$-dimensional
subalgebra $\iota\colon\g\into\ab[N]$ and an element $\lamc\in\g^*\setminus 0$
together form a \emph{Levi pair} $(\g,\lamc)$ for a local CR torus action
$\bK$ if:
\begin{bulletlist}
\item $\g$ acts transversally on $N$ via $\bK$, i.e.,
  $\Rb\g:=\spns\{K_{v,z}\st v\in\g\}$ satisfies Condition~\ref{cond:trans}.
\end{bulletlist}
Let $\eta\colon TN\to \g$ be the connection $1$-form of $\g$,
$\eta^\lamc:=\ip{\eta,\lamc}$, and $h_{\Ds,\lamc}:=
\d\eta^\lamc\restr{\Ds}(\cdot,J\cdot)$. Then $(\Ds,h_{\Ds,\lamc},J)$ is called
a \emph{Levi structure} and we say that $(g,\lamc)$ or $(\Ds,h_{\Ds,\lamc},J)$
is
\begin{bulletlist}
\item \emph{nondegenerate} if $\eta^\lamc$ is a contact form, i.e.,
  $h_{\Ds,\lamc}$ is nondegenerate on $\Ds$;
\item \emph{positive} if $h_{\Ds,\lamc}$ is positive definite on $\Ds$.
\end{bulletlist}
We say $(N,\Ds,J,\bK)$ is \emph{Reeb type} if it admits a nondegenerate Levi
pair, and if positive, we say that $(N,\Ds,J,\g,\lamc)$ or
$(N,\Ds,J,h_{\Ds,\lamc}$) is \emph{Levi--K\"ahler}.

If $\bK$ is a CR torus action of $\Ab[N]$ and $\g$ is the Lie algebra of a
closed subgroup $G$ of $\Ab[N]$, then $M/G$, with the K\"ahler metric induced
by $(h_{\Ds,\lamc},J,\d\eta^\lamc\restr{\Ds})$ is called the
\emph{Levi--K\"ahler quotient} of $(N,\Ds,J)$ by $(\g,\lamc)$.
\end{defn}

If $N$ is compact, $\{\lamc\in\g^*\setminus 0\st \eta^\lamc\in\Gamma(U_\Ds)\}$
is an open cone $\cC_\g\sub \g^*$.

Let $(\g,\lamc)$ be a Levi pair.  For any $v\in\ab[N]$,
$(\d\eta^\lamc)(K_v,\cdot)= -\d(\eta^\lamc(K_v))$.  We may thus view
$\eta^\lamc(K_v)= \ip{\mu(\eta^\lamc),v}$ as the ``horizontal momentum'' of
$K_v$ with respect to the Levi structure $(\Ds,\d\eta^\lamc\restr{\Ds})$.
Observe that if $v\in\g$, $\eta^\lamc_z(K_v)= \ip{v,\lamc}$, which vanishes
for $v\in\ker\lamc\sub\g$.  Hence $z\mapsto \mu(\eta^\lamc_z)\in \ab[N]^*$
takes values in $(\ker\lamc)^0\cong (\ab[N]/\ker\lamc)^*$.

\begin{strgm}\label{str:setup} For any pair $(\g,\lamc)$ with $\g\sub\ab[N]$
and $\lamc\in\g^*\setminus 0$, the quotient $\ab[N]/\ker\lamc$ is an extension
by $\R$ of the quotient $\ab[N]/\g$.  To allow $(\g,\lamc)$ to vary, it is
convenient to fix this extension $\torh\to\tor$ (where $\torh$ and $\tor$ are
abelian Lie algebras of dimensions $m+1$ and $m$); then the commutative
diagram
\begin{diagram}[size=1.5em,nohug]
0 &\rTo & \g &\rTo^\iota & \ab[N] & \rTo^{\ul_N} & \tor &\rTo &0\\
  &     & \dTo^\lamc&   & \dTo^{\Ll_N} &   & \dEq & \\
0 &\rTo & \R&\rTo^{\afs}&\torh & \rTo^\d & \tor &\rTo &0.
\end{diagram}
of short exact sequences associates pairs $(\g,\lamc)$, with $\ab[N]/
\ker\lamc\cong\torh$, to surjective linear maps $\Ll_N\colon\ab[N]\to \torh$
(thus $\g$ is the kernel of $\ul_N:=\d\circ\Ll_N$, and $\lamc$ is induced by
$\Ll_N\restr\g$).

Let $\As\sub\torh^*$ be the affine subspace $(\afs\transp)^{-1}(1)$ modelled
on $\tor^*$; then $\torh$ may be identified with the affine linear functions
$\ell\colon\As\to\R$, whence $\d\ell\in \tor$ is the linear part of
$\ell\in\torh$.
\end{strgm}

By Observation~\ref{o:lift-ham-act}, $\bK$ lifts to a hamiltonian action on
$U_\Ds$ with momentum map $\mu\restr{U_\Ds}$. If $(\g,\lamc)$, defined by
$\Ll_N\colon \ab[N]\to\torh$, is a Levi pair, then the map $\mu^\lamc\colon
N\to\As\sub\torh^*$, determined uniquely by the formula
\begin{equation}\label{eq:momentum}
\ip{\mu^\lamc(z),\Ll_N(v)}= \eta^\lamc_z(K_v)
\end{equation}
for all $z\in N$ and $v\in \ab[N]$, will be called the \emph{horizontal
  momentum map} of $(\Ds,\d\eta^\lamc\restr{\Ds})$. Equivalently the diagram
\begin{diagram}[size=1.5em,nohug]
N & \rTo^{\eta^\lamc}    & U_{\Ds} \\
\dTo^{\mu^\lamc}&        & \dTo_\mu \\
\torh^*&\rTo^{\Ll_N\transp}& \ab[N]^*
\end{diagram}
commutes, i.e., $\Ll_N\transp\circ\mu^\lamc= \mu\circ \eta^\lamc$.

\begin{obs} Let $(N,\Ds,J,\bK)$ be a CR manifold of Reeb type, and
let $(\g,\lamc)$ be a nondegenerate Levi pair, where $\g$ is the Lie algebra
of a subtorus $G$ of $\Ab[N]$. Then $M:=N/G$, equipped with the $2$-form
induced by $\d\eta^\lamc\restr{\Ds}$, is the symplectic quotient
${\mu_\g}^{-1}(\lamc)/G$ of $U_\Ds$ by the lifted $G$-action\textup; it is
therefore a compact symplectic orbifold with a hamiltonian action of
$\T=\Ab[N]/G$ whose momentum map is induced by the $G$-invariant map
$\mu^\lamc\colon N\to\As\sub\torh^*$ defined in~\eqref{eq:momentum}.
\end{obs}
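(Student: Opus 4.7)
The plan is to realize $M$ as the Marsden--Weinstein reduction of $U_\Ds$ at level $\lamc$ by the lifted $G$-action, and then read off all claimed structures from the standard picture.

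First, I would identify $N$ with $\mu_\g^{-1}(\lamc)\sub U_\Ds$. For $v\in\g$ one has $\eta^\lamc_z(K_v)=\ip{v,\lamc}$, so $\mu_\g\circ\eta^\lamc\equiv\lamc$; combined with the bundle isomorphism $(p,\mu_\g)\colon\Ds^0\to N\times\g^*$ (available because $(\g,\lamc)$ is transversal) and the nondegeneracy assumption (which places the image in $U_\Ds$), this shows $\eta^\lamc$ is a diffeomorphism of $N$ onto $\mu_\g^{-1}(\lamc)\sub U_\Ds$. Next, I would pull back the symplectic form and descend it. Applying Lemma~\ref{l:pb-sympc} (with $\ab[N]$ replaced by the transversal action $\g$) and restricting to the constant section $\lamc$ of $\g^*$ gives $(\eta^\lamc)^*\Omega^\Ds=\d\eta^\lamc$. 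To show this descends to $M$, I check that $\d\eta^\lamc$ is $G$-basic: invariance comes from $G$-equivariance of $\eta$ together with $G$ fixing $\lamc\in\g^*$, giving $\cL_{K_v}\eta^\lamc=0$ for $v\in\g$, and Cartan's formula then yields $\iota_{K_v}\d\eta^\lamc=-\d\ip{v,\lamc}=0$, so $\d\eta^\lamc$ is horizontal. Since $\lamc$ is a regular value of $\mu_\g$ (by nondegeneracy) and $G$ acts locally freely (by Condition~\ref{cond:trans}(i)), Marsden--Weinstein reduction identifies the descended $2$-form $\d\eta^\lamc\restr\Ds$ with the reduced symplectic form on $\mu_\g^{-1}(\lamc)/G$. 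Compactness of $M$ follows from that of $N$ (tacitly assumed) and of the torus $G$, with the orbifold structure coming from local freeness.

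For the hamiltonian action of $\T=\Ab[N]/G$: since $\Ab[N]$ is abelian, $G$ is normal and the $\Ab[N]$-action descends. For $v\in\ab[N]$ the function $f_v(z):=\eta^\lamc_z(K_v)$ is $G$-invariant, and $\cL_{K_v}\eta^\lamc=0$ (by commutativity of $\Ab[N]$) gives $\iota_{K_v}\d\eta^\lamc=-\d f_v$, exhibiting $f_v$ as a momentum function on $M$ for the induced vector field. For $v\in\ker\lamc\sub\g$ one has $f_v\equiv0$, so the assignment $v\mapsto f_v(z)$ factors through $\ab[N]/\ker\lamc\cong\torh$ via $\Ll_N$; this is precisely the defining relation~\eqref{eq:momentum} for $\mu^\lamc$, and its image lies in $\As$ because $\ip{\mu^\lamc(z),\afs(1)}=\lamc(v)=1$ for any $v\in\g$ with $\lamc(v)=1$. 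I expect the main bookkeeping challenge to be tracking the placement of $\mu^\lamc$ in $\As\sub\torh^*$ as dictated by Stratagem~\ref{str:setup}; the remaining content is classical symplectic reduction once the diffeomorphism $\eta^\lamc\colon N\to\mu_\g^{-1}(\lamc)$ is in place.
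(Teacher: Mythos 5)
Your proof is correct and follows exactly the route the paper intends: the Observation is stated without proof precisely because it is the assembly of Observation~\ref{o:lift-ham-act} (the lifted hamiltonian action with momentum map $\mu_\g$), Lemma~\ref{l:pb-sympc} restricted to the slice $N\times\{\lamc\}$ (giving $(\eta^\lamc)^*\Omega^\Ds=\d\eta^\lamc$), the identification $\eta^\lamc\colon N\cong\mu_\g^{-1}(\lamc)\sub U_\Ds$ via $(p,\mu_\g)$, and standard Marsden--Weinstein reduction, with the residual $\T$-momentum map read off from~\eqref{eq:momentum} as you do. Your bookkeeping of the factorization through $\ker\Ll_N=\ker\lamc$ and the normalization placing the image in $\As$ matches the paper's Stratagem~\ref{str:setup}, so nothing further is needed.
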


Let $\Rb{}=\Rb{\ab[N]}=\im\kappa$, $\gM:=\im(p,\mu)\sub N\times\ab[N]^*$ and
$\Theta:=\gM^0=\ker(q_\Ds\circ\kappa)= \kappa^{-1}(\Ds)\sub N\times\ab[N]$.
If $\rank\Ds=2m$, then $\rank\Rb{}\cap\Ds\leq m$, and hence $\dim\ab[N]\leq
m+\ell$.

\begin{prop}\label{p:legendrian} Let $\bK\colon\ab[N]\to \crv(N,\Ds,J)$ be
a local CR torus action. Then $\Rb{}\cap\Ds$ is an integrable distribution,
i.e., $\Lv_\Ds(X,Y)=0$ for all $X,Y\in \Rb{}\cap\Ds$.
\end{prop}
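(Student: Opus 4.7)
My plan is to reduce to a pointwise verification at each $z \in N$ and apply Cartan's formula to $\d\tilde\alpha(K_u, K_v)$, where $K_u|_z, K_v|_z \in \Ds_z$ with $u, v \in \ab[N]$ and $\tilde\alpha \in \Gamma(\Ds^0)$ is a local extension of an arbitrary $\alpha \in \Ds^0_z$. I expect the two driving ingredients to be: (a) $\ab[N]$ is abelian, giving $[K_u, K_v] = K_{[u,v]_{\ab[N]}} = 0$; and (b) $K_u$ is a CR vector field, so that $\cL_{K_u}$ preserves $\Gamma(\Ds)$ and dually $\Gamma(\Ds^0)$.

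The main conceptual obstacle is the mismatch between the tensorial definition of $\Lv_\Ds$, which uses arguments in $\Gamma(\Ds)$, and the vector fields $K_u, K_v$, which are transverse to $\Ds$ generically. This is resolved by invoking the tensoriality of $\d\tilde\alpha$: for any $\tilde X, \tilde Y \in \Gamma(\Ds)$ with $\tilde X|_z = K_u|_z$ and $\tilde Y|_z = K_v|_z$, the normalisation convention for the Levi form and the pointwise nature of $\d\tilde\alpha$ together yield
\[
\ip{\alpha, \Lv_\Ds(K_u|_z, K_v|_z)} = \d\tilde\alpha(\tilde X, \tilde Y)|_z = \d\tilde\alpha(K_u, K_v)|_z.
\]

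From there, Cartan's formula expands
\[
\d\tilde\alpha(K_u, K_v) = K_u(\tilde\alpha(K_v)) - K_v(\tilde\alpha(K_u)) - \tilde\alpha([K_u, K_v]),
\]
and the two ingredients above kill all three terms. The bracket term vanishes by (a). For the first term, the Leibniz identity $K_u(\tilde\alpha(K_v)) = (\cL_{K_u}\tilde\alpha)(K_v) + \tilde\alpha([K_u, K_v])$ combined with (a) reduces it to $(\cL_{K_u}\tilde\alpha)(K_v)$, which vanishes at $z$ because by (b) $\cL_{K_u}\tilde\alpha \in \Gamma(\Ds^0)$ (one checks $(\cL_{K_u}\tilde\alpha)(Y) = K_u(\tilde\alpha(Y)) - \tilde\alpha([K_u,Y]) = 0$ for $Y \in \Gamma(\Ds)$) while $K_v|_z \in \Ds_z$; the same argument disposes of the second term. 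Since $\alpha \in \Ds^0_z$ was arbitrary, this gives $\Lv_\Ds(K_u|_z, K_v|_z) = 0$.
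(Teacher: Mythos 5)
Your argument is correct and takes essentially the same route as the paper: both evaluate $\d\alpha(K_u,K_v)$ via the Cartan/Lie-derivative identity, use that $\ab[N]$ is abelian so $[K_u,K_v]=K_{[u,v]}=0$, and use that CR vector fields preserve $\Ds^0$ so the surviving terms $(\cL_{K_u}\alpha)(K_v)$ and $(\cL_{K_v}\alpha)(K_u)$ vanish against vectors of $\Ds$. The only cosmetic difference is that you reduce pointwise (every vector of $(\Rb{}\cap\Ds)_z$ being some $K_{u,z}$), whereas the paper works with general sections $\sum_i f_i K_{v_i}$ and invokes tensoriality of $\d\alpha$; the content is identical.
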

\begin{proof} For any $v,w\in\ab[N]$ and any section $\al$ of $\Ds^0$,
  $\d\alpha(K_v,K_w)=(\cL_{K_v}\al)(K_w)-(\cL_{K_w}\al)(K_v)$.  Hence if
  $X=\sum_i f_i K_{v_i}$ and $Y=\sum_j g_j K_{w_j}$ are sections of $\Rb{}
  \cap\Ds$ (for functions $f_i$ and $g_j$ on $N$), then
\begin{equation*}\textstyle
(\al\circ\Lv_\Ds)(X,Y)=\d\alpha(X,Y) =\sum_i f_i (\cL_{K_{v_i}}\al)(Y)-\sum_j g_j
(\cL_{K_{w_j}}\al)(X)=0
\end{equation*}
since $K_v$ preserves $\Ds^0$ for any $v\in\ab[N]$.
\end{proof}

\begin{rem} If $(\g,\lamc)$ is a Levi pair then $(p,\mu_\g)
\colon\Ds^0\to N\times\g^*$, with $\mu_\g:=\iota\transp\circ\mu$, is an
isomorphism. In particular, $(p,\mu)$ injects, i.e., $\gM$ is a rank $\ell$
subbundle of $N\times\ab[N]^*$ (with $\gM^*\cong (N\times\ab[N])/\Theta\cong
TN/\Ds$). Equivalently, the transpose $q_\Ds\circ\kappa$ surjects (pointwise),
i.e., $\Rb{}\cap\Ds$ has codimension $\ell$ in $\Rb{}$. Conversely, this
suffices for the local existence of a transversal subalgebra
$\iota\colon\g\into\ab[N]$, hence also a Levi pair: $U_\Ds$ is open with
nonempty fibres, so we can find $\lamc\in\g^*$ such that $\eta^{\lamc}$ is
locally a contact form.
\end{rem}
\begin{defn} We say $(N,\Ds,J,\bK)$ is \emph{locally Reeb type} if $q_\Ds
\circ\kappa$ surjects and \emph{toric} if it is locally Reeb type with
$\dim\ab[N]=m+\ell$.
\end{defn}
On any open set where $\rank\Rb{}=m+\ell$, $(N,\Ds,\bK)$ is locally Reeb type.

\begin{prop} Let $(N,\Ds,J)$ be a toric CR manifold under $\Ab[N]$ and
let $N^o$ be the open subset on which the action of $\Ab[N]$ is free, and
$U_\Ds^o$ its inverse image in $U_\Ds\sub \Ds^0$.

Then $U_\Ds$ is a toric symplectic manifold with momentum map $\mu\colon
U_\Ds\to \ab[N]^*$ defined by $\ip{\mu(\al),v}=\al(K_v)$, where $\al\in
U_\Ds\sub T^*N$ and $K_v:=\bK(v)$ for $v\in \ab[N]$.  Further, there are
angular coordinates $\angU\colon U_\Ds^o\to \Ab[N]=\ab[N]/2\pi\Lam_N$, unique up
to an additive constant, with $\Omega^\Ds=\ip{\d \mu\wedge \d\angU}$ and
$\ker\d\angU=p_*^{-1}(J(\Rb{}\cap\Ds))$ \textup(restricting $p\colon\Ds^0\to
N$ to $U_\Ds^o$\textup).
\end{prop}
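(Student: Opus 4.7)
The plan is to prove this in three steps: (1) verify that $U_\Ds$ is toric symplectic under $\Ab[N]$; (2) exhibit a canonical Lagrangian distribution on $U_\Ds^o$ complementary to the $\Ab[N]$-orbit distribution; and (3) use this Lagrangian splitting to construct the angular coordinates $\angU$ and verify the Darboux formula.

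For (1), $U_\Ds$ is open in $\Ds^0$, which has dimension $(2m+\ell)+\ell=2(m+\ell)=2\dim\ab[N]$. By Observation~\ref{o:lift-ham-act} the $\Ab[N]$-action lifts Hamiltonianly to $U_\Ds$ with momentum map $\mu$, and effectiveness is inherited from the action on $N$, so $U_\Ds$ is toric symplectic. Because $\ab[N]$ is abelian the Poisson brackets $\{f_{K_v},f_{K_w}\}=f_{[K_v,K_w]}=0$ vanish, hence $\Ab[N]$-orbits are isotropic. On $U_\Ds^o$ the lifted action is free (since $g\cdot\al=\al$ forces $g\cdot p(\al)=p(\al)$), so the orbits are Lagrangian and $\ker\d\mu$ coincides with the orbit distribution.

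For (2), toricity gives $\rank\Rb{}=m+\ell$ on $N^o$ and $q_\Ds\circ\kappa\colon N\times\ab[N]\to TN/\Ds$ surjects, so $\rank(\Rb{}\cap\Ds)=m$ and $p_*^{-1}(J(\Rb{}\cap\Ds))$ has rank $m+\ell$ on $U_\Ds^o$. I would then verify that this distribution is Lagrangian by computing the three components of $\Omega^\Ds$ on it: the $\ker p_*$–$\ker p_*$ pairing vanishes because cotangent fibres are isotropic in the canonical form; for a vertical $\beta\in\Ds^0$ and a horizontal lift $\tilde X$ of $X\in J(\Rb{}\cap\Ds)\sub\Ds$, one finds $\Omega^\Ds(\beta,\tilde X)=\beta(X)=0$ since $\beta\in\Ds^0$; and for horizontal lifts of $X,Y\in J(\Rb{}\cap\Ds)$, $\Omega^\Ds(\tilde X,\tilde Y)_\al=\al(\Lv_\Ds(X,Y))$, which vanishes because $\Lv_\Ds$ is type $(1,1)$ and vanishes on $\Rb{}\cap\Ds$ by Proposition~\ref{p:legendrian}. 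Transversality with the orbit distribution reduces under $p_*$ to $(\Rb{}\cap\Ds)\cap J(\Rb{}\cap\Ds)=0$, which follows pointwise from the nondegeneracy of $\Lv_\Ds^\al$ on $\Ds$ at points of $U_\Ds$: the Lagrangian $\Rb{}\cap\Ds$ of $(\Ds,\Lv_\Ds^\al)$ is complementary to its $J$-rotate.

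For (3), the two transverse Lagrangian distributions yield action–angle coordinates by the Arnold–Liouville theorem for toric Hamiltonian actions: locally there is a unique-up-to-constant $\angU\colon U_\Ds^o\to\Ab[N]$ with $\d\angU(\tilde K_v)=v$ and $\ker\d\angU=p_*^{-1}(J(\Rb{}\cap\Ds))$, giving $\Omega^\Ds=\ip{\d\mu\wedge\d\angU}$. Because $\ker\d\angU$ is globally defined and the $\Ab[N]$-action is free on $U_\Ds^o$, the local $\angU$'s patch to a global $\Ab[N]$-valued map modulo an additive constant in $\Ab[N]$. The main obstacle is the horizontal–horizontal Lagrangian computation in step (2), which requires translating the abstract identity $\Omega^\Ds(\tilde X,\tilde Y)_\al=\al(\Lv_\Ds(X,Y))$ into a careful local-coordinate argument; the transversality of $\Rb{}\cap\Ds$ and $J(\Rb{}\cap\Ds)$ is secondary but must be checked using only the nondegeneracy (not definiteness) of $\Lv_\Ds^\al$ at each $\al\in U_\Ds$.
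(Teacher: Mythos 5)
Your route is genuinely different from the paper's: you work upstairs on $U_\Ds^o$, exhibiting $p_*^{-1}(J(\Rb{}\cap\Ds))$ as an invariant Lagrangian complement to the orbit distribution and then invoking action--angle theory, whereas the paper works downstairs on $N^o$: it shows $\Rb{}\cap\Ds$, and hence $J(\Rb{}\cap\Ds)$, is integrable, that $TN^o=\Rb{}\oplus J(\Rb{}\cap\Ds)$, and that the $\ab[N]$-valued $1$-form $\beta$ with $\ker\beta=J(\Rb{}\cap\Ds)$ and $\beta(K_v)=v$ is closed, then integrates $\beta$ modulo $2\pi\Lam_N$ and pulls back to $U_\Ds^o$. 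Your isotropy computation in step (2) is correct and has the merit of making the identity $\Omega^\Ds=\ip{\d\mu\wedge\d\angU}$ explicit, which the paper leaves implicit. However, there are two genuine gaps. The first is your justification of the transversality $(\Rb{}\cap\Ds)\cap J(\Rb{}\cap\Ds)=0$: for a merely nondegenerate $J$-invariant $2$-form, a Lagrangian subspace need \emph{not} be complementary to its $J$-rotate. In $\C^2$ with a Hermitian form $h$ of signature $(1,1)$ and $\omega=\Im h$, the real span of a null vector $X$ and $JX$ is a $J$-invariant Lagrangian plane, so $W\cap JW=W\neq 0$. Definiteness of $\al\circ h_\Ds$ would give your claim, but you explicitly disclaim using it; with nondegeneracy alone the pointwise statement is false, so the splitting $TN^o=\Rb{}\oplus J(\Rb{}\cap\Ds)$ must be obtained from the CR/torus structure, not from linear algebra at a single $\al\in U_\Ds$.

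The second gap is in step (3): to obtain $\angU$ with the \emph{prescribed} kernel $\ker\d\angU=p_*^{-1}(J(\Rb{}\cap\Ds))$ you need this distribution to be integrable (equivalently, the form $\beta$ above to be closed); being an invariant Lagrangian complement to the orbits is not sufficient, and Arnold--Liouville does not hand you angle coordinates adapted to a given complement. For instance, on $T^*\T^n$ the invariant distribution spanned by $\del_{x_i}+\sum_j A_{ji}(x)\,\del_{\theta_j}$ is Lagrangian whenever the matrix $A(x)$ is symmetric, but it is integrable only if $\del A_{lj}/\del x_i=\del A_{li}/\del x_j$ for all $i,j,l$. The integrability of $J(\Rb{}\cap\Ds)$ --- deduced in the paper from Proposition~\ref{p:legendrian}, the $J$-invariance of the Levi form and the CR integrability conditions --- is precisely the crux of the paper's proof, and your proposal never establishes it. Both gaps can be repaired by importing the paper's argument (integrability of $\Rb{}\cap\Ds$ and $J(\Rb{}\cap\Ds)$, the splitting of $TN^o$, closedness of $\beta$), at which point your upstairs Lagrangian computation becomes a pleasant way to finish the verification of $\Omega^\Ds=\ip{\d\mu\wedge\d\angU}$.
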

\begin{proof} The first part is immediate from
Observation~\ref{o:lift-ham-act}. By Proposition~\ref{p:legendrian},
$\Rb{}\cap\Ds$ is an integrable rank $m$ subbundle of $\Ds$, hence so is
$J(\Rb{}\cap\Ds)$ by the $J$-invariance of the Levi form, and
$TN^o=\Rb{}\oplus J(\Rb{}\cap\Ds)$. The $1$-form $\beta\colon
TN^o\to \ab[N]$ defined by $\ker\beta=J(\Rb{}\cap\Ds)$ and
$\beta(K_v)=v$ is therefore closed. It is not exact, but by definition of
$N^o$, the local primitives fit together to give a primitive with values in
$\ab[N]/2\pi\Lam_N$ unique up to a constant, whose pullback to $U_\Ds^o$ yields
$\angU$.
\end{proof}
In terms of the short exact sequence~\eqref{eq:vb-es} restricted to $U_\Ds^o$,
we have
\begin{diagram}[size=1.5em,nohug]
0 &\rTo & p^*\Ds^0 &\rTo &\ker\d\angU &\rTo & p^*J(\Rb{}\cap\Ds)&\rTo &0\\
  &     & \dEq     &     & \dTo    &         & \dTo    & \\
0 &\rTo & p^*\Ds^0 &\rTo & TU_\Ds^o &\rTo^{p_*} & p^*TN^o & \rTo &0,
\end{diagram}
where for any $\al\in U_\Ds^o$, $(\d\mu)_\al(\ker\d\angU)=\ab[N]^*$, and
$(\d\mu)_\al(p^*\Ds^0)=\mu(\Ds^0_{p(\al)})=\gM_{p(\al)}$. This identifies
$(\d\mu)_\al(p^*J(\Rb{}\cap\Ds))$ with $\Theta^*_{p(\al)}$.

A subalgebra $\iota\colon\g\into \ab[N]$ satisfying Condition~\ref{cond:trans}
splits the exact sequence
\begin{diagram}[size=1em,nohug]
0 &\rTo & \gM & \rTo & N^o\times \ab[N]^* & \rTo &\Theta^* &\rTo & 0
\end{diagram}
i.e., $\ker\iota\transp\cong\tor^*$ is transverse to $\gM_z$ for all $z\in
N^o$. Thus $\mu(U_\Ds^o)\sub\ab[N]^*$ is foliated by its intersection with the
$m$-dimensional family $\gM_z$ of $\ell$-dimensional linear subspaces of
$\ab[N]^*$.

\begin{rem}\label{r:Kq} The Levi--K\"ahler quotient of $N$ by $(\g,\lamc)$ is
the K\"ahler quotient of any $\Ab[N]$-invariant, $\Omega^\Ds$ compatible
metric $\hat g$ on $U^+_\Ds$ whose pullback to $\mu_\g^{-1}(\lamc)\cong N$ is
the orthogonal sum of a metric on $\Rb\g$ and the metric $h_{\Ds,\lamc}$ on
$\Ds$.  We may assume $\hat g$ has angular coordinates $\d\angU$ on
$U_\Ds^{o,+}=U_\Ds^o\cap U_\Ds^+$, so that it is determined uniquely there by
the induced $\Ab[N]$-invariant metric on $\ker\d\angU\cong N^o\times
\ab[N]^*$, which descends to a metric $\bG$ on $\mu(U_\Ds^{o,+})\sub
\ab[N]^*$. Since $h_{\Ds,c\lamc}=c\, h_{\Ds,\lamc}$ for $c\in\R^+$, we assume
$\bG$ is homogeneous of degree $1$ on $\ab[N]^*$, i.e., as an
$S^2\ab[N]$-valued function on $\mu(U_\Ds^{o,+})$, it is homogeneous of degree
$-1$. Examples of such metrics include the generalized K\"ahler cone metrics.

If $(\g,\lamc)$ is given by $\Ll_N\colon\ab[N]\to\torh$, then the
Levi--K\"ahler quotient metric depends only on the pullback of $\bG$ to
$\Ll_N\transp(\As)=(\iota\transp)^{-1}(\lamc)$, an affine subspace transverse
to $\gM$.
\end{rem}

\section{Levi--K\"ahler reduction in toric geometry}\label{s:lkr-toric}

\subsection{Polytopes, fans, combinatorics, and toric contact
manifolds} \label{s:toric-contact}

Suppose that $(N,\Ds,J,\bK)$ is a toric CR manifold of rank $m$ and
codimension $\ell$, under a (real) torus $\Ab[N]=\ab[N]/2\pi\Lam_N$ with
(abelian) Lie algebra $\ab[N]=\Lam_N\otimes_\Z\R$.

The theory of effective actions of tori~\cite{Audin,GGK} implies that for any
subtorus $H\leq\Ab[N]$,
\begin{equation*}
N_{(H)} = \{ z\in N \st H = \Stab_{\Ab[N]}(z) \}\sub
N^H = \{ z\in N \st H \sub \Stab_{\Ab[N]}(z)\}
\end{equation*}
is an open submanifold of a closed submanifold of $N$, and if $N_{(H)}$ is
nonempty then $N_{(H)}$ is dense in $N^H$. The connected components of
$N_{(H)}$ and their closures in $N$ are called \emph{open} and \emph{closed
  orbit strata} of $(N,\bK)$.  Let $\Cb_N$ be the set of closed orbit strata,
partially ordered by inclusion, and let $N_s:s\in\cS$ index the closed orbit
strata stabilized by a circle. We refer to $\Cb_N$ as the \emph{combinatorics}
of $N$; it is a ``poset over $\cS$''.

\begin{defn} A \emph{poset} (partially ordered set) \emph{over a set} $\cS$ is
a set $\Cb$ equipped with a partial ordering (reflexive antisymmetric
transitive relation) and a map $\cS\to\Cb$. A morphism $\Cb\to\Cb'$ of posets
over $\cS$ is an order preserving map whose composite with the map $\cS\to\Cb$
is the map $\cS\to\Cb'$. We say $\Cb$ and $\Cb'$ have the \emph{same
  combinatorial type} if they are isomorphic as posets over $\cS$. The
combinatorics arising in toric geometry are typically isomorphic to subposets
of the power set $P(\cS)$ or its opposite $P(\cS)^{op}$, which are posets over
$\cS$ under inclusion or reverse inclusion respectively, with the map from
$\cS$ being the singleton map $s\mapsto \{s\}$.
\end{defn}

To illustrate this, we start, as in Stratagem~\ref{str:setup}, with an exact
sequence
\[
0\to \R\stackrel{\afs}{\to}\torh\stackrel{\d}{\to}\tor\to 0
\]
of vector spaces, viewed as an extension of abelian Lie algebras with
$\dim\tor=m$, and let $\As:= (\afs\transp)^{-1}(1)$ be the corresponding
$m$-dimensional affine subspace of $\torh^*$. Recall that a convex polytope
$\Pol$ in $\As$ is a subset of the form
\begin{equation*}
\Pol:=\{\xi\in \As\st \forall s\in\cS,\enskip L_s(\xi)\geq 0\}
\end{equation*}
where $\cS$ is a finite set, and $L_s\in\torh$ (an affine function on $\As$)
for each $s\in \cS$.

\begin{rem} The \emph{combinatorics} $\Cb_\Pol$ of $\Pol$ is the poset
over $\cS$ of closed faces of $\Pol$. More precisely, for $\xi\in \As$, let
$S_\xi= \{s\in\cS\st L_s(\xi)=0\}$; we assume that $\Pol\sub\As$ has nonempty
interior $\mathring{\Pol}$ (so for $\xi\in\mathring{\Pol}$, $S_\xi=\emptyset$)
and that for any $s\in\cS$, there exists $\xi\in\Pol$ with $S_\xi=\{s\}$
(otherwise we may discard $s$ without changing $\Pol$). The map sending
$S\sub\cS$ to $\Fa_S:=\{\xi\in\Pol\st S\sub S_\xi\}=\{\xi\in\Pol\st \forall
s\in S,\enskip L_s(\xi)=0\}$ restricts to an isomorphism from $\{S_\xi\in
P(\cS)^{op}\st \xi\in\Pol\}$ to $\Cb_\Pol$ over $\cS$. Any closed face is thus
the intersection of the \emph{facets} $\Fa_s:=\Fa_{\{s\}}$ containing it:
$\Fa_S=\bigcap_{s\in S}\Fa_s$.  We assume that the empty face is an element of
$\Cb_\Pol$, so that $\Fa_S\in\Cb_\Pol$ for all $S\in P(\cS)$.
\end{rem}

Given a compact convex polytope $\Pol\sub\As$, the positive span $\R^+\Pol$ is
a cone in $\torh^*$; the \emph{dual cone} to $\Pol$ is
\begin{equation*}
\Pol^*:=\{L\in\torh\st \forall\xi\in\Pol,\enskip L(\xi)\geq 0\},
\end{equation*}
and its projection onto $\tor$ defines a decomposition $\Fan$ of $\tor$,
called the associated (complete) \emph{fan}, into a union of polyhedral cones
\begin{equation*}
\cC_S:=\{\d L\in \tor \st \forall\xi\in\Pol,\enskip L(\xi)\geq 0
\text{ with equality for all } \xi\in\Fa_S\}=\spns\{u_s\st s\in S\}
\end{equation*}
corresponding to the faces $\Fa_S$ of $\Pol$. These cones form a poset
$\Cb_\Fan$ over $\cS$ under inclusion, and a (complete) fan $\Fan$ is uniquely
determined by its combinatorics $\Cb_\Fan$ and its rays (one dimensional
cones) $\cC_s:=\cC_{\{s\}}$. When $\Fan$ is constructed from $\Pol$ as above
then $\Cb_\Fan$ is canonically isomorphic to $\Cb_\Pol^{op}$ over $\cS$, and
in particular there is a canonical bijection between the facets of $\Pol$ and
the rays $\cC_s:=\cC_{\{s\}}$ of $\Fan$.

The rays of $\Fan$ determine $u_s:s\in\cS$ up to positive scale, and similarly
$\Pol$ determines $L_s:s\in\cS$ up to positive scale. Given a choice of these
scales, we say $\Pol$ is a \emph{labelled polytope} with \emph{affine normals}
$L_s\in\torh$ and \emph{inward normals} $u_s:=\d L_s\in\tor$ for $s\in\cS$,
and that $\Fan$ is a \emph{labelled fan} with generators $u_s\in\tor$ for
$s\in\cS$.

\begin{defn} A (complete) fan $\Fan$ is \emph{simplicial} if the rays in any
cone are linearly independent. A (compact) convex polytope $\Pol$ is
\emph{simple} if its fan is simplicial. In terms of a labelling, this means
for all $S\in\Cb_\Fan$, $u_s:s\in S$ is linearly independent, or (for
polytopes) $\forall\xi\in\Pol$, $\cB_\xi := (u_s:s\in S_\xi)$ is linearly
independent. This condition only depends on the vertices $\xi$ of $\Pol$,
where it means that $\cB_\xi$ is a basis; in particular, each vertex is
$m$-valent.
\end{defn}

Returning to $(N,\Ds,J,\bK)$, the underlying contact manifold $(N,\Ds)$ is
toric under $(\bK,\Ab[N])$ in the sense of~\cite{tcg}, where the following
result is established.

\begin{thm} \label{t:polytope} Let $(N,\Ds,\bK)$ be a \textup(compact,
connected\textup) toric contact manifold under $\Ab[N]$.  Then the stabilizers
in $\Ab[N]$ of points in $N$ are connected \textup(i.e., subtori\textup) and
the fibres of the momentum map $\mu$ on $U_\Ds$ are $\Ab[N]$-orbits. For any
nondegenerate Levi pair $(\g,\lamc)$ the signs of the primitive generators
$v_s\in\ab[N]$ of the circle stabilizers of $N_s:s\in\cS$ may be chosen
uniquely such that the image of the horizontal momentum map $\mu^\lamc\colon
N\to\As\sub\torh^*$ is the compact simple convex polytope $\Pol$ in $\cA$
defined by the affine functions $L_s:=\Ll_N(v_s)$ and $\mu^\lamc$ is a
submersion over the interior of each face.
\end{thm}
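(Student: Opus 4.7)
The plan is to deduce all four assertions from the toric symplectic structure on the symplectization $U_\Ds\sub \Ds^0$, reducing them to established toric/contact convexity results. By Observation~\ref{o:lift-ham-act}, the lift $\tilde\bK$ of $\bK$ to $U_\Ds$ is hamiltonian with momentum map $\mu$; since fibrewise scaling on $\Ds^0$ by $\R^+$ commutes with $\tilde\bK$ and rescales $\Omega^\Ds$ by the same factor, $\mu$ is $\R^+$-equivariant, so $\mu(U_\Ds)$ is a cone in $\ab[N]^*$. The hypothesis $\dim\ab[N] = m+\ell = \tfrac12\dim U_\Ds$ then makes $(U_\Ds,\Omega^\Ds,\tilde\bK)$ a (non-compact) toric symplectic manifold.

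For the first two assertions, I would apply the Atiyah--Guillemin--Sternberg--Delzant theory in the form developed by Lerman and others for symplectic cones over compact contact toric manifolds. The Reeb-type hypothesis provides a section $\eta^\lamc\colon N\to U_\Ds$ whose image is compact; every $\Ab[N]$-orbit in $U_\Ds$ meets the cone $\R^+\cdot\eta^\lamc(N)$, which reduces connectedness of stabilizers and the statement that fibres of $\mu$ are single orbits to their compact-symplectic counterparts. This is the core technical content of~\cite{tcg}, which I would invoke.

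For the polytope image of $\mu^\lamc$, the key observation is that for $v\in\g$, $(\mu\circ\eta^\lamc)(z)(v) = \eta^\lamc_z(K_v) = \ip{v,\lamc}$, so $\mu\circ\eta^\lamc$ takes values in the affine subspace $(\iota\transp)^{-1}(\lamc) = \Ll_N\transp(\As)\sub \ab[N]^*$, transverse to $\tor^*\sub \ab[N]^*$. Comparing with~\eqref{eq:momentum} identifies $\mu^\lamc$ with $(\Ll_N\transp)^{-1}\circ\mu\circ\eta^\lamc$. Since $\eta^\lamc$ is an $\Ab[N]$-equivariant diffeomorphism from $N$ onto $\mu_\g^{-1}(\lamc)\cap U_\Ds$, the image $\mu^\lamc(N)$ is $(\Ll_N\transp)^{-1}\bigl(\mu(U_\Ds)\cap\Ll_N\transp(\As)\bigr)$, an affine slice of the momentum cone, hence automatically a compact convex polytope $\Pol\sub\As$ by convexity of the cone and compactness of $N$.

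Finally, the facets of $\Pol$ are labelled by the circle stabilizers of $\Ab[N]$ acting on $N$, each primitive generator $v_s\in\Lam_N$ being determined uniquely up to sign by its closed orbit stratum $N_s$; the sign is fixed by requiring $L_s:=\Ll_N(v_s)$ to be non-negative on $\Pol$, equivalently by requiring the horizontal momentum $\eta^\lamc(K_{v_s})$ of $K_{v_s}$ to attain its minimum $0$ along $N_s$. Simplicity of $\Pol$ and the submersion property on the relative interior of each face follow from the standard fact that at a point with stabilizer Lie algebra $\h\sub\ab[N]$, $\d\mu$ has image $\h^0$. The main obstacle, deferred to~\cite{tcg}, is the convexity and codimension-$\ell$ local normal form for the non-compact cone $U_\Ds$, which in higher codimension requires a genuine extension of the classical symplectic toric machinery beyond the case $\ell=1$.
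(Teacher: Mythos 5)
The first thing to say is that the paper itself contains no proof of this theorem: it is stated as a result ``established in''~\cite{tcg}, so the only available comparison is between your outline and the way the statement is set up here and proved in the companion paper. In that respect your overall organisation is the right one: lift the action to the symplectization $U_\Ds$, which is toric of dimension $2(m+\ell)$ with momentum map $\mu$; identify $\mu^\lamc$ with the slice of $\mu$ along the affine subspace $\Ll_N\transp(\As)=(\iota\transp)^{-1}(\lamc)$ via $\Ll_N\transp\circ\mu^\lamc=\mu\circ\eta^\lamc$; fix the signs of the $v_s$ by nonnegativity of $L_s\circ\mu^\lamc=\eta^\lamc(K_{v_s})$; and get the submersion statement from $\im\d\mu_\al=(\stab)^0$ once the stabilizers along strata are known. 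You also correctly identify that the convexity/polyhedrality of the momentum image and the codimension-$\ell$ local normal form (hence simplicity, connectedness of stabilizers, and fibres of $\mu$ being orbits) are the real content, and you defer these to~\cite{tcg}, exactly as the paper does. So as an outline it is consistent, but it is not an independent proof: every substantive input is carried by the citation.

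There is, however, one concretely false step. You claim that every $\Ab[N]$-orbit in $U_\Ds$ meets the cone $\R^+\cdot\eta^\lamc(N)$, and use this to reduce the connectedness of stabilizers and the statement that $\mu$-fibres are orbits to a compact situation. Since $\Ab[N]$ is abelian, $\mu$ and hence $\mu_\g=\iota\transp\circ\mu$ are $\Ab[N]$-invariant, so each orbit lies in a single level set $\mu_\g^{-1}(\lamc')$; under the identification $(p,\mu_\g)\colon\Ds^0\cong N\times\g^*$ the set $\R^+\cdot\eta^\lamc(N)$ is precisely $\mu_\g^{-1}(\R^+\lamc)$. For $\ell\geq 2$ the nondegeneracy locus meets each fibre of $\Ds^0$ in an open ($\ell$-dimensional) cone, so there are orbits whose $\mu_\g$-value lies off the ray $\R^+\lamc$, and these never meet $\R^+\cdot\eta^\lamc(N)$ (a dimension count gives the same conclusion: the orbit space of $U_\Ds$ modulo $\Ab[N]$ and scaling has dimension $m+\ell-1$, while that of $\R^+\cdot\eta^\lamc(N)$ has dimension $m$). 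Consequently the proposed reduction only sees the single momentum level $\mu_\g\in\R^+\lamc$, whereas the assertions about stabilizers and $\mu$-fibres concern all of $U_\Ds$; they genuinely require the convexity and local normal form results of~\cite{tcg} (the higher-codimension analogue of Lerman's contact toric theory), and cannot be obtained by slicing against one nondegenerate $\lamc$. Relatedly, ``an affine slice of a cone'' only gives a compact convex set; that it is a polytope cut out by the finitely many $L_s$, and simple, again comes from the normal form, not from convexity and compactness alone.
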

In particular, $\mu^\lamc$ induces a poset isomorphism over $\cS$ of $\Cb_N$
with $\Cb_\Pol$.
\begin{cor} $N$ and $\Pol$ \textup(i.e., $\Cb_N$ and $\Cb_\Pol$\textup) have
the same combinatorial type.
\end{cor}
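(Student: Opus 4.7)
The corollary is essentially a direct consequence of the last sentence of Theorem~\ref{t:polytope}, which already asserts that $\mu^\lamc$ induces a poset isomorphism $\Cb_N\cong\Cb_\Pol$ over $\cS$. Since ``having the same combinatorial type'' is defined precisely as being isomorphic as posets over $\cS$, my plan is to unpack why the theorem's ``in particular'' clause yields the corollary, and then state the corollary as an immediate consequence.

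First I would describe the candidate map $\Cb_N\to\Cb_\Pol$. Given a closed orbit stratum $N_{H}\sub N$ (the closure of $N_{(H)}$ for some subtorus $H\leq\Ab[N]$), I would take its image under $\mu^\lamc$. Using the fact from Theorem~\ref{t:polytope} that fibres of $\mu$ on $U_\Ds$ are $\Ab[N]$-orbits, together with the commutativity $\Ll_N\transp\circ\mu^\lamc=\mu\circ\eta^\lamc$, it follows that $\mu^\lamc$ is constant on $\Ab[N]$-orbits (since $\Ll_N\transp$ is injective on $\As$), so the fibres of $\mu^\lamc$ are $\Ab[N]$-orbits as well. Combined with the submersion property over each face's interior, this identifies $\mu^\lamc(N_H)$ with a single closed face of $\Pol$ and produces an order-preserving bijection $\Cb_N\to\Cb_\Pol$ (inclusions of strata correspond to inclusions of faces, hence to reverse inclusion of index sets in $P(\cS)^{op}$).

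Next I would verify that this bijection commutes with the two given maps $\cS\to\Cb_N$ and $\cS\to\Cb_\Pol$; this is where the choice of affine functions $L_s:=\Ll_N(v_s)$ enters. For $z\in N_s$, the primitive generator $v_s$ stabilizes $z$, so $K_{v_s,z}=0$ and hence
\[
\ip{\mu^\lamc(z),L_s}=\ip{\mu^\lamc(z),\Ll_N(v_s)}=\eta^\lamc_z(K_{v_s})=0,
\]
by the defining formula~\eqref{eq:momentum} of $\mu^\lamc$. Thus $\mu^\lamc(N_s)\sub\{\xi\in\Pol\st L_s(\xi)=0\}=\Fa_s$, and since both are faces of the same codimension (by the connectedness of stabilizers and the submersion property), they agree.

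The main obstacle is not logical but expository: all the substantive content, including the nontrivial fact that stabilizers are connected and that $\mu^\lamc(N)=\Pol$ with $\mu^\lamc$ a submersion over open faces, is packaged inside Theorem~\ref{t:polytope} (drawn from~\cite{tcg}). Given that theorem, the corollary reduces to observing that the poset isomorphism it supplies is by construction a morphism of posets over $\cS$, i.e., respects the distinguished elements $N_s\leftrightarrow\Fa_s$, which is exactly the calculation above. Thus the proof will be a one- or two-line deduction from the theorem.
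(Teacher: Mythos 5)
Your proposal is correct and follows the paper's route: the paper offers no separate argument, treating the corollary as immediate from the clause after Theorem~\ref{t:polytope} that $\mu^\lamc$ induces a poset isomorphism of $\Cb_N$ with $\Cb_\Pol$ over $\cS$, which is exactly the definition of having the same combinatorial type. Your extra unpacking (fibres of $\mu^\lamc$ being $\Ab[N]$-orbits and the computation $L_s(\mu^\lamc(z))=\eta^\lamc_z(K_{v_s})=0$ on $N_s$, so $N_s$ maps to $\Fa_s$) is a faithful elaboration of what the theorem, imported from~\cite{tcg}, already packages.
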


\begin{rem} The primitive generators $v_s:s\in\cS$ need not be linearly
independent in $\ab[N]$, nor even distinct, although after taking a quotient
of $N$ by a subtorus acting freely, we may assume that they span.
\end{rem}

Suppose now that $G$ is a closed subgroup of $\Ab[N]$ with Lie algebra $\g$.
Such a subgroup exists if and only if the lattice of circle subgroups $\Lam_N$
in $\ab[N]$ is mapped to a (rank $m$) lattice $\Lam$ in $\tor$, which holds if
and only if $u_s:s\in\cS$ span a lattice in $\tor$.

\begin{defn} If $\Lam\sub\tor$ is a lattice, then a polytope $\Pol$ or
fan $\Fan$ labelled by $L_s$ \textup(with $u_s=\d L_s$\textup) or
$u_s:s\in\cS$ is \emph{rational} with respect to $\Lam$ if for all $s\in\cS$,
$u_s\in\Lam$.
\end{defn}

We are now ready to study the Levi--K\"ahler quotient $N/G$, which is a
compact toric K\"ahler orbifold $M$ of real dimension $2m$ under an $m$-torus
$\T$ with Lie algebra $\tor$ and hamiltonian generators $\torh$.  Indeed, with
respect to the symplectic form on $N/G$ induced by $\d\eta^\lamc|_\Ds$,
$\mu^\lamc\colon N\to\As\sub\torh^*$ descends to a (natural) momentum map for
the action of $\T=\Ab[N]/G$ on $N/G$, whose image is the the rational simple
convex polytope $\Pol\sub\As$. Rational Delzant theory~\cite{Delzant,LT}
asserts that any such toric K\"ahler orbifold $M$ is determined up to
symplectomorphism or biholomorphism by its labelled polytope or fan. The
construction of $M$ from these data is relevant here, so we review it now.

Let $\Z_\cS$ be the free abelian group generated by $\cS$, let
$\ab=\Z_\cS\otimes_\Z \R$ and $\C_\cS=\Z_\cS\otimes_\Z \C$ be the
corresponding free vector spaces over $\R$ and $\C$, and let
$\Ab=\ab/2\pi\Z_\cS$ and $\Ab^\C=\C_\cS/2\pi\Z_\cS\cong\C_\cS\mult$ be the
corresponding real and complex tori.  Denote the generators of
$\Z_\cS\sub\ab\sub \C_\cS$ by $e_s:s\in \cS$, and observe that $\Ab$ and
$\Ab^\C$ act diagonally on $\C_\cS$, via $[\sum_s t_s e_s]\cdot (\sum_s z_s
e_s)=\sum_s\exp (i t_s) z_s e_s$. The action of $\Ab$ on $\C_\cS$ is
hamiltonian (with respect to the standard symplectic form $\omega_\cS$ on
$\cS$) and has a momentum map $\mm\colon\C_\cS\to \ab^*$ defined by
\begin{equation}\label{eq:std-momentum}
\ip{\mm(z),e_s}=\sigma_s(z)=\tfrac{1}{2}|z_s|^2,
\end{equation}
where $z_s\colon \C_\cS\to \C$ denote the standard (linear) complex
coordinates on $\C_\cS$.

The labellings $s\to L_s$ and $s\mapsto u_s$ of $\Pol$ and $\Fan$ induce, and
are defined by (without loss, surjective) linear maps $\Ll\colon\ab\to\torh$
and $\ul\colon\ab\to\tor$ with $\Ll(e_s)=L_s$ and $\ul(e_s)=u_s$ for all
$s\in\cS$. Let $\tilde\g$ be the kernel of $\ul$; then $\Ll$ determines a
linear form $\tilde\lamc\in\tilde\g^*$ completing the following diagram:
\begin{equation}\label{eq:gc}
\begin{diagram}[size=1.5em,nohug]
0 &\rTo & \tilde\g &\rTo^{\tilde\iota} & \ab &\rTo^{\ul} & \tor &\rTo &0\\
  &   &\dTo^{\tilde\lamc}&    & \dTo^{\Ll} &   & \dEq & \\
0 &\rTo & \R &\rTo^{\afs} & \torh & \rTo^\d & \tor &\rTo &0.
\end{diagram}
\end{equation}
When $\Pol$ (or $\Fan$) is rational, then $\ul$ maps $\Z_\cS$ to $\Lam$ and
hence defines a map from $\Ab=\ab/2\pi\Z_\cS$ to $\tor/2\pi\Lam$ whose kernel
is a closed subgroup $\tilde G$ of $\Ab$ with Lie algebra $\tilde\g$.

The combinatorics $\Cb_\Fan$ of $\Fan$ (or $\Pol$) define an open subset
$\C_\cS\ins\sub \C_\cS$ as the union of $z\in \C_\cS$ for which
$S_z:=\{s\in\cS\st z_s=0\}$ is in $\Cb_\Fan$. In other words, $\C_\cS\ins$ is
the union of the $\Ab^\C$-orbits $\C_{\cS,S}:=\{z\in \C_\cS\st z_s=0 \text{
  iff } s\in S\}$ over $S\in\Cb_\Fan$. Thus the set of $\Ab^\C$ orbits in
$\C_\cS\ins$ is isomorphic to $\Cb_\Fan$, and $S\sub S'$ iff the closure of
$\C_{\cS,S}$ contains $\C_{\cS,S'}$.

\begin{lemma}\label{l:stabG} Let $(\Fan,\ul)$ be a simplicial fan with
combinatorics $\Cb_\Fan$.  Then $\tilde\g\sub\ab$ acts locally freely on
$\C_\cS\ins$. If in addition $(\Fan,\ul)$ is rational, and $\tilde G$ is the
corresponding closed subgroup of $\Ab$, then for any $S\in\Cb_\Fan$, the
stabilizer in $\tilde G$ of any $z\in \C_{\cS,S}$ is $\Stab_{\tilde G}(z)\cong
(\Lam\cap\spns_\R\{u_s\st s\in S\})/\spns_\Z \{u_s\st s\in S\}$.
\end{lemma}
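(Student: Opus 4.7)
The plan is to reduce both assertions to a computation of the pointwise stabilizer of $z\in \C_{\cS,S}$ first in the ambient torus $\Ab$ (or its Lie algebra $\ab$), and then to intersect with $\tilde G$ (respectively $\tilde\g$). The simpliciality of $\Fan$ will be used twice: once to conclude that the infinitesimal stabilizer of $\tilde\g$ is zero, and once to show that $\ul$ restricts to a linear isomorphism on the relevant coordinate subspace, from which the description of $\Stab_{\tilde G}(z)$ will fall out.

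First I would work at the infinitesimal level. A vector $v=\sum_{s}t_{s}e_{s}\in\ab$ acts on $z=\sum_{s}z_{s}e_{s}\in\C_\cS$ with derivative $\sum_{s} it_{s}z_{s}e_{s}$, so $v\in\stab_{\ab}(z)$ iff $t_{s}=0$ for every $s$ with $z_{s}\neq 0$; equivalently, $\stab_{\ab}(z)=\spns_\R\{e_{s}:s\in S_{z}\}$. For $z\in\C_\cS\ins$ we have $S_{z}\in\Cb_\Fan$, and simpliciality makes $\{u_{s}:s\in S_{z}\}$ linearly independent in $\tor$. Hence $\stab_{\ab}(z)\cap\ker\ul=0$, i.e., $\tilde\g$ acts locally freely on $\C_\cS\ins$.

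Exponentiating the same observation, $[v]\in\Stab_{\Ab}(z)$ iff $v$ may be written as an element of $\spns_\R\{e_{s}:s\in S\}+2\pi\Z_\cS$. Since $\Z_\cS\cap \spns_\R\{e_{s}:s\in S\}=\spns_\Z\{e_{s}:s\in S\}$, this gives
\[
\Stab_{\Ab}(z)\cong \spns_\R\{e_{s}:s\in S\}\big/2\pi\spns_\Z\{e_{s}:s\in S\}.
\]
Intersecting with $\tilde G=\{[v]\in\Ab:\ul(v)\in 2\pi\Lam\}$ selects the cosets whose representative $v=\sum_{s\in S}t_{s}e_{s}$ satisfies $\sum_{s\in S}t_{s}u_{s}\in 2\pi\Lam$.

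The decisive step is again simpliciality: because $\{u_{s}:s\in S\}$ is linearly independent, $\ul$ restricts to a linear isomorphism $\spns_\R\{e_{s}:s\in S\}\to \spns_\R\{u_{s}:s\in S\}$ which sends the standard integral lattice $\spns_\Z\{e_{s}:s\in S\}$ bijectively onto $\spns_\Z\{u_{s}:s\in S\}$. Transferring $\Stab_{\tilde G}(z)$ through this isomorphism then identifies it with
\[
\bigl(2\pi(\Lam\cap\spns_\R\{u_{s}:s\in S\})\bigr)\big/ 2\pi\spns_\Z\{u_{s}:s\in S\}=\bigl(\Lam\cap\spns_\R\{u_{s}:s\in S\}\bigr)\big/\spns_\Z\{u_{s}:s\in S\},
\]
as claimed. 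The main obstacle is really just bookkeeping: one has to keep track of two lattices inside $\spns_\R\{e_{s}:s\in S\}$ and the factors of $2\pi$, but once the isomorphism coming from simpliciality is in place this is routine.
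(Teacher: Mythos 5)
Your proof is correct and follows essentially the same route as the paper: compute the stabilizer of $z\in\C_{\cS,S}$ in $\ab$ (resp.\ $\Ab$) as the span of $\{e_s\st s\in S\}$ (resp.\ its image mod $2\pi\Z_\cS$), and use simpliciality, i.e.\ linear independence of $u_s:s\in S$, first to kill the infinitesimal stabilizer in $\tilde\g$ and then to transfer the group stabilizer through $\ul$ onto $(\Lam\cap\spns_\R\{u_s\st s\in S\})/\spns_\Z\{u_s\st s\in S\}$. Your write-up just makes explicit the lattice bookkeeping that the paper compresses into ``the result follows.''
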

\begin{proof} Let $S\in \Cb_\Fan$ and $z\in \C_{\cS,S}$, so that $z_s=0$ iff $s
\in \cS$. Then any element of the stabilizer of $z$ in $\ab$ has the form
$v=\sum_{s\in S}t_se_s$, which belongs to $\tilde\g$ iff $\sum_{s\in S} t_s
u_s=0$.  However, since $(\Fan,\ul)$ is simplicial, $u_s:s\in S$ is linearly
independent, hence $v=0$.

For the second part, an element $[v]=[\sum_{s\in S}t_se_s]$ of the stabilizer
of $z$ in $\Ab=\ab/2\pi\Z_\cS$ is in $\tilde G$ iff $\sum_{s\in S} t_s
u_s\in\Lam$, and is the identity element iff $t_s\in\Z$ for all $s\in S$.
The result follows.
\end{proof}

The Delzant--Lerman--Tolman correspondence and the relation between symplectic
and complex (GIT) quotients~\cite{Delzant,LT} now assert that:
\begin{numlist}
\item as a complete toric variety, $M$ is a complex (GIT) quotient
  $\C_{\cS}\ins/\tilde G^\C$ of $\C_\cS$ by $\tilde G^\C$;
\item as a compact toric symplectic orbifold, $M$ is a symplectic quotient
  $\tilde N/\tilde G$---where $\tilde N =
  (\tilde\iota\transp\sigma)^{-1}(\tilde\lamc)$---of $\C_\cS$ by $\tilde G$ at
  momentum level $\tilde\lamc\in\tilde\g^*$.
\end{numlist}
Here the labellings of the polytope and fan are related to the orbifold
structure groups of $M$ by Lemma~\ref{l:stabG}. Note that the complex
structure in (i) is biholomorophic to the complex structure of the K\"ahler
quotient in (ii), which is the quotient of the induced CR structure on $\tilde
N\sub \C_\cS$. However, $M$ is not typically \emph{isometric} to the K\"ahler
quotient of $\C_\cS$ by $G$, which is called the \emph{Guillemin
  metric}~\cite{Guillemin,Abreu0,Abreu1} of $\Pol$.

In particular, the Levi--K\"ahler quotient
$(N/G,\d\eta^\lamc\restr{\Ds},J|_\Ds)$ is symplectomorphic to the toric
symplectic orbifold obtained from $(\Pol,\Ll)$ by the Delzant--Lerman--Tolman
construction, but also biholomorphic to the Levi--K\"ahler quotient of the
toric CR submanifold $\tilde N$ of flat space by $(\tilde\g,\tilde\lamc)$. In
general $N$ and $\tilde N$ have different dimensions, but there is a map from
$\Lam$ to $\Lam_N$ sending $e_s$ to $v_s$, and if the latter form a basis for
$\Lam_N$, then we may identify $\Ab[N]$ with $\Ab$, $\Ll_N$ with $\Ll$, and
hence $(\tilde\g,\tilde\lamc)$ with $(\g,\lamc)$. This motivates the study of
Levi--K\"ahler quotients of toric CR submanifolds of flat space, which will
occupy us for the remainder of the paper.

\subsection{Toric CR submanifolds of flat space}\label{s:toric-CR-sub}

Let $\cS$ be a $d$ element set, and define $\Z_\cS$, $\ab$, $\C_\cS$, $\Ab$
and $\Ab^\C$ as in~\S\ref{s:toric-contact}. Let
$\bK\colon\ab\to\ham(\C_\cS,\omega_\cS)$ be the (local) hamiltonian action,
and let $\ang\colon\C\mult_\cS\to\Ab$ be angular coordinates conjugate to the
momentum components~\eqref{eq:std-momentum} on the open set $\C\mult_\cS$ of
$\C_\cS$ where $z_s\neq 0$ for all $s\in \cS$; thus $\d\ang\colon
T\C\mult_\cS\to\ab$ satisfies $\d\ang(K_v)=v$ and $\d\ang(JK_v)=0$ for all
$v\in\ab$. The flat K\"ahler metric in action-angle coordinates on
$\C\mult_\cS$ is then
\begin{equation}\label{eq:flat-metric}
g_\cS = \sum_{s\in \cS}\Bigl(\frac{{\d\sigma_s}^2}{2\sigma_s}
+2\sigma_s {\d\ang_s}^2\Bigr),
\quad\omega_\cS = \sum_{s\in \cS} \d\sigma_s\wedge\d\ang_s,
\quad \d^c\sigma_s:=J\d\sigma_s=2\sigma_s \d\ang_s.
\end{equation}
In particular, the metric $\bH(v,w):=g_\cS(K_v,K_w)$ on the $\Ab$-orbits is
given by the smooth function $\bH=2\delta\mm \colon\C_\cS\to S^2\ab^*$, where
$\delta\colon\ab^*\to S^2\ab^*\sub\ab^*\otimes\ab^*$ is the coproduct dual to
componentwise multiplication in $\ab$; thus if we write $v=\sum_{s\in\cS} v_s
e_s$ and $w=\sum_{s\in\cS} w_s e_s$ then $\bH_z(v,w)=\sum_{s\in\cS}
2\sigma_s(z) v_s w_s$, which is positive definite for $z\in\C\mult_\cS$.  Note
the following crucial property of the flat K\"ahler metric on $\C_\cS$:
$\d^c\sigma_s(K_v)=2\sigma_s v_s$ for all $v\in\ab$, i.e.,
\begin{equation}\label{eq:crucial}
\d^c\mm(K_v)=\bH(v)=2(\delta\mm)(v),
\end{equation}
where we use the natural inclusion $S^2\ab^*\sub\Hom(\ab,\ab^*)$ to evaluate
$\bH=2\delta\mm$ on $v$.

We now restrict attention to Levi--K\"ahler quotients in the following
setting.
\begin{defn}  A \emph{toric CR submanifold} of $\C_\cS$ is a
compact connected CR submanifold $(N,\Ds,J)$ which is invariant and locally
Reeb type under the action of $\Ab$.
\end{defn}
We assume that for any $S\sub\cS$, the intersection of $N$ with $\Ab^\C$ orbit
$\C_{\cS,S}$ is connected; these intersections are then the orbit strata, and
the combinatorics $\Cb_N$ of $N$ may be identified with the poset of
$S\sub\cS$ such that $N\cap \C_{\cS,S}$ is nonempty. We also assume that for
all $s\in\cS$, $\{s\}\in\Cb_N$, i.e., $N_s:=N\cap\C_{\cS,\{s\}}=\{z\in N\st
z_s=0\}$ is nonempty, with generic stabilizer $\mathopen<\exp(t
e_s)\mathclose>$ (if this did not hold, the $\mathopen<\exp(t e_s)\mathclose>$
circle action on $N$ would be free, and we could take a quotient).

We refer to a Levi--K\"ahler quotient $M$ of a toric codimension $\ell$ CR
submanifold $(N,\Ds,J)$ in $\C_\cS$ by a positive Levi pair $(\g,\lamc)$, where
$\g$ is the Lie algebra of an $\ell$-dimensional subtorus $G\sub\Ab$, as a
(codimension $\ell$) \emph{Levi--K\"ahler reduction} of $\C_\cS$.

The data $(N,\Ds,J)$ and $(\g,\lamc)$ are linked by Condition~\ref{cond:trans},
which may be viewed as a constraint on $(N,\Ds,J)$ given $(\g,\lamc)$ or vice
versa. We specify the choice of $(\g,\lamc)$ as in~\S\ref{s:toric-CR} via a
surjective linear map $\Ll\colon \ab\to \torh$, or equivalently, an indexed
family $L_s:s\in\cS$ of vectors in $\torh$ which span (where $L_s=\Ll(e_s)$).
In other words, for toric CR submanifolds $(N,\Ds,J)$ of $\C_\cS$, a pair
$(\g,\lamc)$ is associated canonically, via the set-up~\eqref{eq:gc}, with a
(not necessarily compact or nonempty) convex polytope $\Pol_{\g,\lamc}=
\{\xi\in\As\st \forall\,s\in\cS,\enskip L_s(\xi)\geq 0\}$ labelled (formally)
by $L_s:s\in\cS$ (although some facets $\Fa_s$ could be empty a priori). We
denote the combinatorics of $\Pol_{\g,\lamc}$ by $\Cb_{\g,\lamc}$.

\begin{lemma} Let $N$ be a toric CR submanifold of $\,\C_\cS$ satisfying
Condition~\textup{\ref{cond:trans}} relative to $(\g,\lamc)$. Then there is a
smooth pointwise surjective function $\chi_{N,\g}\colon N\to \Hom(\ab^*,\g)$
such that for all $z\in N$, $\eta_z=\chi_{N,\g}(z)\circ\d^c\mm_z$ and
$\d\eta_z\restr{\Ds_z}=\chi_{N,\g}(z)\circ\d\d^c\mm_z\restr{\Ds_z}$.
\end{lemma}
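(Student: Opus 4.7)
My plan is to construct $\chi_{N,\g}$ pointwise using a geometric factorization argument, then extend smoothly to $\ab^*$.

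The key pointwise input is $T_zN\cap\ker\d^c\mm_z\sub\Ds_z$ for every $z\in N$. To prove it, let $S_z=\{s\in\cS\st z_s=0\}$; the stabilizer subtorus $\Ab[S_z]\leq\Ab$ fixes $z$ and gives a weight decomposition $T_z\C_\cS=T_z\overline{\C_{\cS,S_z}}\oplus\bigoplus_{s\in S_z}E_z^{(s)}$, each $E_z^{(s)}\cong\C$ a complex coordinate line; compatibly, $\ker\d^c\mm_z=J\Rb\ab_z\oplus\bigoplus_{s\in S_z}E_z^{(s)}$. Since $T_zN$ is $\Ab[S_z]$-invariant, each $T_zN\cap E_z^{(s)}$ is either $0$ or all of $E_z^{(s)}$; in the latter case $E_z^{(s)}$ is $J$-invariant, hence contained in $T_zN\cap JT_zN=\Ds_z$. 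For the other piece, $W\in T_zN\cap J\Rb\ab_z$ gives $JW\in-\Rb\ab_z\sub T_zN$, so $W\in\Ds_z$; combining yields the claim.

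It follows that $\d^c\mm_z\restr{T_zN}$ factors through $T_zN/\Ds_z\cong\g$ (the isomorphism being $\eta_z$), with image $\bH_z(\g)\oplus\d^c\mm_z(\Ds_z)$---the direct sum following from the kernel statement together with~\eqref{eq:crucial} and the positive-definiteness of $\bH_z\restr{\g}$ ensured by Condition~\ref{cond:trans}(i). On this subspace $\chi_{N,\g}(z)$ is forced: $\bH_z(v)\mapsto v$ and $\d^c\mm_z(\Ds_z)\mapsto 0$; surjectivity onto $\g$ is automatic. Extending to $\ab^*$ by zero on a smoothly varying complement, chosen for instance via a fixed $\Ab$-invariant inner product, produces a smooth $\chi_{N,\g}$ satisfying the first identity.

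For the second identity, exterior-differentiating the first gives $\d\eta=\d\chi_{N,\g}\wedge\d^c\mm\restr{N}+\chi_{N,\g}\cdot\d\d^c\mm\restr{N}$ on $N$, so the desired formula on $\Ds$ is equivalent to $(\d\chi_{N,\g}\wedge\d^c\mm\restr{N})\restr{\Ds}=0$. This vanishing is not automatic; it constrains the extension chosen in the previous paragraph. Differentiating $\chi_{N,\g}(z)(\d^c\mm_z(Y_z))=0$ for $Y\in\Gamma(\Ds)$ along $X\in\Gamma(\Ds)$ produces the relation $(X\chi_{N,\g})(\d^c\mm(Y))=-\chi_{N,\g}(X\cdot\d^c\mm(Y))$, and using the $\Ab$-equivariance of the extension together with $\d\d^c\mm(X,Y)=X\cdot\d^c\mm(Y)-Y\cdot\d^c\mm(X)-\d^c\mm([X,Y])$ and the first identity applied to $[X,Y]\in TN$, the antisymmetrization in $X,Y\in\Ds_z$ cancels the derivative terms. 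I expect this final verification to be the hardest step, hinging on the flatness of $\C_\cS$ through the specific form $\bH=2\delta\mm$ of~\eqref{eq:crucial}.
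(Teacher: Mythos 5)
Your pointwise analysis contains a genuinely nice observation --- the inclusion $T_zN\cap\ker\d^c\mm_z\sub\Ds_z$, proved via the weight decomposition, is correct and does show that the assignment $\d^c\mm_z(V)\mapsto\eta_z(V)$ is well defined on the image $\d^c\mm_z(T_zN)=\bH_z(\g)\oplus\d^c\mm_z(\Ds_z)$ (note, though, that your phrase ``$\d^c\mm_z\restr{T_zN}$ factors through $T_zN/\Ds_z$'' is not what you mean: $\d^c\mm_z$ does not vanish on $\Ds_z$). But two essential steps are missing. First, smoothness: the rank of $\d^c\mm_z\restr{T_zN}$ drops on the strata where some $z_s=0$, and ``extend by zero on an orthogonal complement'' is then not even continuous. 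Already for $\Sph^3\sub\C^2$ with $\g$ the diagonal, the forced map on the open stratum sends both $e_1^*$ and $e_2^*$ to $\tfrac12 v$, whereas at a point with $z_2=0$ your recipe sends $e_2^*$ to $0$. The paper sidesteps this by writing $N=(F\circ\mm)^{-1}(0)$ for a defining function $F$ on $\ab^*$ and taking $\chi_{N,\g}=(\d F\circ\bH\circ\iota)^{-1}\circ\d F$, which is manifestly smooth across the strata and independent of the choice of $F$.

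Second, and more seriously, the final cancellation is asserted rather than proved. You correctly reduce the second identity to $(\d\chi_{N,\g}\wedge\d^c\mm)\restr{\Wedge^2\Ds}=0$, i.e.\ to the symmetry of $(X\cdot\chi_{N,\g})(\d^c\mm(Y))$ in $X,Y\in\Gamma(\Ds)$, but the sketch you give merely re-uses the two identities that produced this term, so it is circular, and $\Ab$-equivariance of an extension cannot supply it: at points where $\d^c\mm_z(T_zN)=\ab^*$ (which happens on the open stratum of a product of spheres, and of any toric hypersurface) the value $\chi_{N,\g}(z)$ is completely forced by the first identity, so there is no extension left to adjust and the required symmetry is a genuine constraint on $N$. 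In the defining-function picture it amounts to the vanishing on $\Wedge^2\Ds$ of the antisymmetrization of $\Hess F(\d\mm(X),\d^c\mm(Y))$, which is not a formal consequence of \eqref{eq:crucial} or of toric invariance; it is immediate precisely when the defining equations are affine in $\mm$ --- the intersections of quadrics and products of spheres used in the rest of the paper --- since then $\d_X\chi_{N,\g}=-G^{-1}(\d_X G)\circ\chi_{N,\g}$ with $G=\d F\circ\bH\circ\iota$, so each term $(\d_X\chi_{N,\g})(\d^c\mm(Y))$ vanishes separately on $\Ds$. So your proposal establishes neither the smoothness of $\chi_{N,\g}$ nor the second identity, and the latter cannot be repaired by a cleverer choice of extension alone; you need the defining-function description and the specific structure that kills the Hessian-type term, which is exactly where the paper's own argument does its work.
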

\begin{proof} The CR submanifold $N$ may be written (at least locally, and in
our examples globally) $N=(F\circ\mm)^{-1}(0)$ where $F\colon\ab^*\to W$ is a
smooth function with values in an $\ell$-dimensional vector space $W$, for
which $0$ is a regular value. Hence $T_z N=\ker \d F_{\mm(z)} \circ \d\mm_z$
and so $\Ds$ is the kernel of the pullback $\bnu$ of $\d F\circ\d^c\mm$ to
$N$, with $\bnu_z(K_v)=\d F_{\mm(z)}(\bH_z(v))$ for $v\in\ab$ and $z\in N$. By
Condition~\ref{cond:trans}, $\d F\circ\bH\circ\iota\colon N\to \Hom(\g,W)$ is
a pointwise isomorphism, and $\eta=(\d F\circ\bH\circ\iota)^{-1}\bnu$.

We may now set $\chi_{N,\g}=(\d F\circ\bH\circ\iota)^{-1}\circ\d F$; this
formula may only be valid locally, but the result is independent of the choice
of $F\colon \ab^*\to W$, so $\chi_{N,\g}$, with $\eta=\chi_{N,\g}\circ\d^c\mm$
is globally defined.  Since $\bnu\restr\Ds=0$, $\d\eta\restr\Ds= (\d
F\circ\bH\circ\iota)^{-1}\d\bnu\restr\Ds$. Now $\d(\d F\circ\d^c\mm)=(\Hess
F)(\d\mm\wedge \d^c\mm) +\d F\circ \d\d^c\mm$. Pulling back to $N$ and
restricting to $\Ds$, the first term vanishes (since $F\circ\mm$ is constant on
$N$). Hence $\d\eta\restr\Ds=\chi_{N,\g}\circ\d\d^c\mm\restr\Ds$.
\end{proof}
The \emph{characteristic function} of $(N,\g,\lamc)$ is the (nowhere
vanishing) function $\chi=\chi_{N,\g,\lamc}\colon N\to\ab$ with
$\chi_{N,\g,\lamc}(z)=\lamc\circ\chi_{N,\g}(z)$. Hence
$\eta^\lamc:=\ip{\eta,\lamc}=\ip{\d^c\mm,\chi_{N,\g,\lamc}}$ and the
horizontal momentum map $\mu^\lamc\colon N\to \cA\sub\torh^*$
satisfies\textup:
\begin{equation}\label{eq:M-momentum}
\ip{\mu^\lamc(z),\Ll(v)}=\eta^\lamc_z(K_v)=\bH_z(v,\chi_{N,\g,\lamc}(z))
=2\ip{\delta\mm(z),v\otimes\chi_{N,\g,\lamc}(z)}.
\end{equation}
Thus $\eta^\lamc=\sum_{s\in\cS} \chi_s\, \d^c \sigma_s$ and
$\ip{\mu^\lamc,\Ll(v)}=\sum_{s\in\cS} 2\sigma_s \chi_s v_s$, i.e.,
$L_s(\mu^\lamc)=2\sigma_s\chi_s$. Since $\d\d^c\sigma_s=2\d\sigma_s\wedge
\d\ang_s$, the induced metric on $\Ds$ (over $\C\mult_\cS\cap N$) is
\begin{equation}\label{eq:Ds-metric}
h_{\Ds,\lamc} = \sum_{s\in\cS} \frac{L_s(\mu^\lamc)}{\sigma_s} \Bigl(
\frac{{\d\sigma_s}^2}{2\sigma_s} +2\sigma_s {\d\ang_s}^2 \Bigr)\Restr{\Ds}
=\sum_{s\in\cS} 2\chi_s \Bigl(
\frac{{\d\sigma_s}^2}{2\sigma_s} +2\sigma_s {\d\ang_s}^2 \Bigr)\Restr{\Ds}.
\end{equation}

Theorem~\ref{t:polytope} shows that if $(\g,\lamc)$ is a nondegenerate Levi
pair, then the image of the horizontal momentum map $\mu^\lamc$ is the compact
simple convex polytope $\Pol$, defined by $\pm L_s:s\in\cS$ for some choice of
signs, and also that $\Pol$ has the same combinatorial type as $N$. Now if
$\Pol=\Pol_{\g,\lamc}$ (i.e., all signs are positive) then
equation~\eqref{eq:Ds-metric} shows that $(\g,\lamc)$ is a \emph{positive} Levi
pair. This motivates the introduction of the following constraint.

\begin{cond}\label{cond:comb} $\Pol_{\g,\lamc}$ is a compact convex polytope
with the same combinatorial type as $N$ (as a subset of $P(\cS)$).
\end{cond}

\begin{thm} \label{t:toric} Let $N$ be a toric submanifold of $\C_\cS$ and
suppose $(\g,\lamc)$ is a Levi pair. Then $\im\mu^\lamc=\Pol_{\g,\lamc}$ if and
only if $(\g,\lamc)$ is a positive Levi pair satisfying
Condition~\textup{\ref{cond:comb}}.
\end{thm}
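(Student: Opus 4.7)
\emph{Plan.} The proof rests on the two formulas $L_s(\mu^\lamc) = 2\sigma_s \chi_s$ from \eqref{eq:M-momentum} and the expression \eqref{eq:Ds-metric} displaying $h_{\Ds,\lamc}$, on $\C\mult_\cS \cap N$, as a $\chi_s$-weighted combination of the per-coordinate summands of the flat K\"ahler metric $g_\cS$ on $\C_\cS$. Both sides of the biconditional will be matched to the single statement that each $\chi_s$ is nonnegative on $N$ and strictly positive wherever $\sigma_s > 0$.

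For the forward direction, assume $\im \mu^\lamc = \Pol_{\g,\lamc}$. Compactness of $\Pol_{\g,\lamc}$ follows from compactness of $N$, and pulling back the defining inequalities $L_s \geq 0$ under $\mu^\lamc$ yields $\chi_s \geq 0$ on $\{\sigma_s > 0\} \cap N$ via the first identity, with strict positivity over the preimage of the interior $\mathring\Pol_{\g,\lamc}$. Inserting this into \eqref{eq:Ds-metric} exhibits $h_{\Ds,\lamc}$ there as a strictly positive linear combination of $\Ds$-restrictions of the per-factor summands of $g_\cS$; since these summands are mutually orthogonal and assemble to the positive definite flat metric, the combination is positive definite on $\Ds$. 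The positivity extends by continuity to all of $N$, with positive definiteness at each boundary stratum $N_s$ secured by a local analysis that resolves the $1/\sigma_s$ singularity. Condition \ref{cond:comb} is then read off from the map $S \mapsto \mu^\lamc\bigl(\bigcap_{s \in S} N_s\bigr)$, which furnishes an order-preserving bijection $\Cb_N \to \Cb_{\g,\lamc}$ over $\cS$.

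For the reverse direction, assume $(\g,\lamc)$ is positive and satisfies Condition \ref{cond:comb}. Positivity forces nondegeneracy, so Theorem \ref{t:polytope} provides $\Pol := \im \mu^\lamc$ as a compact simple convex polytope cut out by $\{\epsilon_s L_s\}_{s \in \cS}$ for a unique choice of signs $\epsilon_s \in \{\pm 1\}$, with $\mu^\lamc$ inducing a poset isomorphism $\Cb_N \cong \Cb_\Pol$ over $\cS$. By Condition \ref{cond:comb}, $\Pol_{\g,\lamc}$ has the same combinatorics as $\Cb_N$, hence as $\Pol$, so both are compact simple polytopes in $\cA$ sharing the same facet lattice labeled by $\cS$. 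A short combinatorial step finishes the argument: at any vertex $\xi_S$ of $\Pol$ (so $|S| = m$), the equations $L_s(\xi_S) = 0$ for $s \in S$ confine $\xi_S$ to a single point of $\cA$, which coincides with the corresponding vertex of $\Pol_{\g,\lamc}$; simplicity then gives $L_s(\xi_S) > 0$ for $s \notin S$, and the requirement $\epsilon_s L_s(\xi_S) \geq 0$ from $\xi_S \in \Pol$ forces $\epsilon_s = +1$ for those $s$. Ranging $S$ over all vertex labels of $\Pol$ covers every $s \in \cS$, yielding $\epsilon_s = +1$ uniformly and $\Pol = \Pol_{\g,\lamc}$.

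The main obstacle I anticipate is the extension of positive definiteness to the boundary strata $N_s$ in the forward direction: the formula \eqref{eq:Ds-metric} develops $1/\sigma_s$ singularities there, which must be reconciled with the fact that $h_{\Ds,\lamc}$ is a smooth tensor on $\Ds$ over all of $N$. The resolution relies on rewriting $h_{\Ds,\lamc}$ in coordinates intrinsic to $N$ near $N_s$, exploiting the transversality of $N$ to the coordinate hyperplane $\{z_s = 0\}$ encoded in the stratification hypothesis $\{s\} \in \Cb_N$.
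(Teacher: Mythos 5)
Your skeleton is the same as the paper's: the forward implication is driven by the identities $L_s(\mu^\lamc)=2\sigma_s\chi_s$ from \eqref{eq:M-momentum} together with \eqref{eq:Ds-metric}, and the converse by positivity $\Rightarrow$ nondegeneracy $\Rightarrow$ Theorem~\ref{t:polytope}, followed by an argument fixing the signs $\eps_s$. Your vertex argument for the converse is correct and usefully expands what the paper leaves implicit (the paper simply asserts that, given positivity, Theorem~\ref{t:polytope} makes $\im\mu^\lamc=\Pol_{\g,\lamc}$ equivalent to Condition~\ref{cond:comb}); just note that what you actually need is that every $s\in\cS$ lies \emph{outside} the label set of some vertex of $\im\mu^\lamc$ (true because each facet of a full-dimensional compact polytope misses some vertex), not that the vertex labels cover $\cS$.

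The genuine gap is in the forward direction, precisely at the point you flag as your ``main obstacle'' and then do not resolve. Continuity from $\C\mult_\cS\cap N$ only gives positive \emph{semi}-definiteness of $h_{\Ds,\lamc}$ along a stratum $N\cap\C_{\cS,S}$, and the identity $L_s(\mu^\lamc)=2\sigma_s\chi_s$ is vacuous there for $s\in S$: both sides vanish identically on the stratum whatever the sign of $\chi_s$, so the hypothesis $\im\mu^\lamc=\Pol_{\g,\lamc}$ gives, through this identity alone, no control on $\chi_s$ at such points. Yet this is exactly the information required: at $z$ with $z_s=0$ for $s\in S$ the coordinate directions $\C_S$ lie inside $\Ds_z$ (they are killed by $\d\mm_z$ and are $J$-invariant), the form $\d\eta^\lamc(\cdot,J\cdot)$ is block diagonal with respect to the splitting of $\Ds_z$ into $\C_S$ and the stratum's CR distribution, and the $\C_S$-block is $\sum_{s\in S}2\chi_s(z)$ times the flat metric on the corresponding factors; so positive definiteness on $\Ds_z$ is \emph{equivalent} to strict positivity of precisely those $\chi_s(z)$, $s\in S$, which your limit from $\{\sigma_s>0\}$ cannot produce. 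Your promised ``local analysis resolving the $1/\sigma_s$ singularity'' is only announced, so the step that makes the forward implication nontrivial is missing. The paper's own device is different from a limiting argument: it applies \eqref{eq:Ds-metric} to each orbit stratum (every closed stratum is again a toric CR submanifold, obtained by intersecting with coordinate subspaces, and the same momentum/metric computation runs there), thereby establishing positive definiteness over the interior of every face of $\Pol_{\g,\lamc}$ directly, and hence everywhere on $N$ since these interiors exhaust $\Pol_{\g,\lamc}$. To complete your proof you must either carry out such a stratumwise argument or otherwise prove $\chi_s>0$ along the strata $N_s$ over the interiors of the corresponding faces.
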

\begin{proof} If $\im\mu^\lamc=\Pol_{\g,\lamc}$, then~\eqref{eq:Ds-metric}, applied
to each orbit stratum, shows that $h_{\Ds,\lamc}$ is positive definite over the
interior of each face of $\Pol_{\g,\lamc}$, hence everywhere. Thus $(\g,\lamc)$
is a positive Levi pair. Under this assumption, Theorem~\ref{t:polytope} shows
that $\im\mu^\lamc=\Pol_{\g,\lamc}$ if and only if Condition~\ref{cond:comb} holds.
\end{proof}

\subsection{Levi--K\"ahler reduction for quadrics}\label{s:lkr-quadrics}

We now specialize to the case that $N$ is an intersection of quadrics.  For
$N$ to be toric with codimension $\ell$, it is then the level set of an
$\ell$-dimensional family of components of $\mm\colon \C_\cS\to \ab^*$, hence
of the form $(F\circ\mm)^{-1}(0)$, with $F=\iota_o\transp-\lamc_o\colon \ab^*\to
\g_o^*$, where $\iota_o\colon\g_o\into\ab$ is an inclusion of an
$\ell$-dimensional subspace, and $\lamc_o\in \g_o^*$ is in the image of the
positive quadrant of $\ab^*$.

Thus $N=\mu_o^{-1}(\lamc_o)$, where $\mu_o=\iota_o\transp\mm$, is defined by the
same sort of data $(\g_o,\lamc_o)$ as the data $(\g,\lamc)$ which determines the
Levi--K\"ahler structure on $N$. These data may therefore be fixed in the same
way as $(\g,\lamc)$ using a diagram of linear maps
\begin{diagram}[size=1.5em,nohug]
0 &\rTo & \g_o &\rTo^{\iota_o} & \ab & \rTo^{\ul^o} & \tor &\rTo &0\\
  &     & \dTo^{\lamc_o}&   & \dTo^{\Ll^o} &   & \dEq & \\
0 &\rTo & \R&\rTo^{\afs}&\torh & \rTo^\d & \tor &\rTo &0.
\end{diagram}
We write $N_{\g_o,\lamc_o}$ or $N_{\Ll^o}$ for the CR submanifold
corresponding to these data.  We shall assume that $\Pol_{\g_o,\lamc_o}$ is a
compact convex polytope, so that it satisfies Condition~\ref{cond:comb}: the
image of $\mm\colon N\to \ab^*$ thus lies in the nonnegative quadrant of
$\ab^o:=\{\xi\in\ab^*\st \forall\, v\in \g_o, \enskip \xi_s v_s =
0\;\forall\,s\in \cS \Rightarrow v=0\}$, and $u^o_s:s\in\cS$ are the normals
of a complete fan.

Since $F$ is affine linear, $\d F$ is constant, equal to $\iota_o\transp$, and so
$\bnu_z(K_v)=\iota_o\transp(\bH_z(v))$. Hence $\gM_z=\im(\bH_z\circ\iota_o
\colon\g_o\to \ab^*)$ and so $\gM=\mm^*\gM^o$ where $\gM^o\to\ab^o$ has fibre
$\gM^o_\xi= \{(\xi_s v_s)_{s\in\cS}\in \ab^*\st v\in \g_o\}$.

We can satisfy Condition~\ref{cond:trans} by letting $(\g,\lamc)$ equal
$(\g_o,\lamc_o)$.
\begin{prop} On $N_{\g_0,\lamc_0}$, $(\g_o,\lamc_o)$ is a positive Levi pair.
\end{prop}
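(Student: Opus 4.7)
My plan is to apply Theorem~\ref{t:toric}, which reduces the positivity of $(\g_o,\lamc_o)$ to the image equality $\im\mu^{\lamc_o}=\Pol_{\g_o,\lamc_o}$, once $(\g_o,\lamc_o)$ has been shown to be a (nondegenerate) Levi pair satisfying Condition~\ref{cond:comb}. I would verify Condition~\ref{cond:comb} via the Delzant--Lerman--Tolman correspondence of~\S\ref{s:toric-contact}, which matches $\Cb_N$ with $\Cb_{\g_o,\lamc_o}$. For Condition~\ref{cond:trans}, the compactness of $\Pol_{\g_o,\lamc_o}$ forces $\lamc_o$ to be a regular value of $\mu_o=\iota_o\transp\mm$, so $\g_o$ acts locally freely on $N=\mu_o^{-1}(\lamc_o)$ and $\rank\Rb{\g_o}=\ell$. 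The definition $\eta=\chi_{N,\g_o}\circ\d^c\mm$ together with~\eqref{eq:crucial} gives $\eta(K_v)=v$ for $v\in\g_o$, so $\eta|_{\Rb{\g_o}}$ is fibrewise an isomorphism and $\Rb{\g_o}\cap\Ds=0$.

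Granted nondegeneracy, I would then invoke Theorem~\ref{t:polytope} to identify $\im\mu^{\lamc_o}$ as a compact simple convex polytope $\Pol^\star\sub\As$ of combinatorial type $\Cb_N$ labelled by $\epsilon_sL_s$ for some signs $\epsilon_s\in\{\pm1\}$. The decisive observation is that for each $s\in\cS$ the closed orbit stratum $N_s=\{z\in N:z_s=0\}$ satisfies $\sigma_s\equiv 0$, so~\eqref{eq:M-momentum} yields $L_s\circ\mu^{\lamc_o}=2\sigma_s\chi_s\equiv 0$ on $N_s$, placing the $s$-facet of $\Pol^\star$ in the hyperplane $\{L_s=0\}$---precisely the $s$-facet hyperplane of $\Pol_{\g_o,\lamc_o}$. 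Hence $\Pol^\star$ and $\Pol_{\g_o,\lamc_o}$ are compact simple convex polytopes in $\As$ sharing the same combinatorial type and the same $\cS$-labelled facet hyperplanes; since each vertex of a simple polytope is the unique intersection of the $m$ facet hyperplanes recorded by its combinatorial label, the two polytopes share the same vertex set and therefore coincide. Theorem~\ref{t:toric} then delivers positivity.

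The main obstacle I anticipate is establishing the nondegeneracy of the Levi pair $(\g_o,\lamc_o)$, which is required to apply Theorem~\ref{t:polytope}. I would handle this by working directly with the characteristic function $\chi_s(z)=\bigl(\iota_o(\iota_o\transp\bH_z\iota_o)^{-1}\lamc_o\bigr)_s$: verifying $\chi_s>0$ at a single convenient reference point on $N^o$ (for instance a critical point of $\prod_s\sigma_s$ on $N$, where the orthogonal-projection computation simplifies) and using continuity, connectedness of $N^o$, and the algebraic constraint $\sum_sL_s(\mu^{\lamc_o})v_s=\ip{v,\lamc_o}$ for $v\in\g_o$ coming from~\eqref{eq:M-momentum} to propagate the positivity. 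This in turn ensures both nondegeneracy of $h_{\Ds,\lamc_o}$ via~\eqref{eq:Ds-metric} and the image identification required above.
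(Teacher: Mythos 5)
Your route is genuinely different from the paper's (which proves the statement by a short convex-duality computation), but as proposed it leaves the actual content unproved. All the difficulty of the Proposition is concentrated in showing that the characteristic function $\chi^o(z)=\iota_o(\iota_o\transp\circ\bH_z\circ\iota_o)^{-1}(\lamc_o)$ has positive components, and your treatment of this step --- check $\chi_s>0$ at one convenient point of $N^o$ and ``propagate'' by continuity and connectedness, aided by the identity $\sum_{s\in\cS} L_s(\mu^{\lamc_o})v_s=\lamc_o(v)$ for $v\in\g_o$ --- does not work as stated: continuity and connectedness transport a sign only if you already know $\chi_s$ never vanishes, which is essentially the nondegeneracy you are trying to establish (and which Theorem~\ref{t:polytope} requires as a hypothesis before you may even speak of the polytope $\im\mu^{\lamc_o}$), while the displayed identity is just one linear relation among the $d$ quantities $2\sigma_s\chi_s$ per element of $\g_o$ and excludes nothing. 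The analogous propagation in the proof of Theorem~\ref{t:product-spheres} succeeds only because of the grouped structure $L_{ir}(\mu^\lamc)=2\chi_i\sigma_{ir}$, where all facets of one spherical factor share a single $\chi_i$; for a general intersection of quadrics each $s$ has its own $\chi_s$ and that mechanism is unavailable. Note also that if you could prove $\chi_s>0$ everywhere, then \eqref{eq:Ds-metric} already gives positive definiteness of $h_{\Ds,\lamc_o}$ and the Proposition outright, so the detour through Theorems~\ref{t:polytope} and~\ref{t:toric} and the facet-hyperplane matching would be superfluous. The paper closes exactly this gap directly: completeness of the fan makes the image of the positive quadrant of $\ab^*$ under $\iota_o\transp$ a strictly convex cone $\cC$ containing $\lamc_o$, and since $\bH_z$ is diagonal and positive semidefinite, $(\iota_o\transp\circ\bH_z\circ\iota_o)^{-1}(\lamc_o)$ lies in the dual cone, i.e.\ $\iota_o$ of it has positive components.

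Two subsidiary points. Your verification of Condition~\ref{cond:trans}(ii) is circular: $\eta$ and $\chi_{N,\g_o}$ are defined only once Condition~\ref{cond:trans} holds (the construction of $\chi_{N,\g}$ inverts $\d F\circ\bH\circ\iota$ using transversality), so $\eta(K_v)=v$ cannot be invoked to prove transversality; the honest check is that for $v\in\g_o$, $K_{v,z}\in\Ds_z$ forces $\sum_{s\in\cS} 2\sigma_s(z)v_s^2=0$, hence $v=0$ because $\mm(N)$ lies in the nonnegative quadrant of $\ab^o$, the standing assumption of \S\ref{s:lkr-quadrics}. Similarly, compactness of $\Pol_{\g_o,\lamc_o}$ alone does not make $\lamc_o$ a regular value of $\mu_o$: regularity needs linear independence of the normals $u^o_s$, $s\in S_z$, on each stratum, which comes from the combinatorial (simplicity) hypothesis rather than from compactness.
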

\begin{proof} Since $u^o_s:s\in\cS$ are the normals of a complete fan, $\{
\alpha\in\tor^*\st\forall\,s\in\cS,\enskip \alpha(u^o_s)\geq 0\}=\{0\}$.
Hence $(\ul^o)\transp(\tor^*)$ meets the positive quadrant of $\ab^*$ only at
$0$, so the image in $\g_o^*$ of this positive quadrant is a strictly convex
cone $\cC$ whose dual cone $\cC_*$ is the intersection of $\g_o$ with the
inverse image (under $\iota_o$) of the positive quadrant in $\ab$.  Since
$\bH_z$ is diagonal and positive definite, it maps the positive quadrant of
$\ab$ onto the positive quadrant of $\ab^*$. Thus
$\iota_o\transp\circ\bH_z\circ \iota_o$ maps $\cC_*$ onto $\cC$. Since
$\lamc_o\in\cC$, $\chi^o(z):=\iota_o
(\iota_o\transp\circ\bH_z\circ\iota_o)^{-1}(\lamc_o)$ has positive components,
and hence $h_{\Ds,\lamc_o}$ is positive definite.
\end{proof}
If the fan associated to $(\g_o,\lamc_o)$ is rational, $(M,J)$ is the
underlying complex orbifold of the Delzant--Guillemin K\"ahler quotient of
$\C_\cS$ by $(\g_o,\lamc_o)$. Its K\"ahler form belongs to the same K\"ahler
class as the Levi--K\"ahler quotient, but will not be the same in general.

\begin{rem} By continuity, we also obtain a positive definite metric for $(
\g,\lamc)$ in an open neighbourhood of $(\g_o,\lamc_o)$.  In particular, we
can fix $\g=\g_o$ and vary $\lamc$ to obtain an $\ell$-dimensional family of
Levi--K\"ahler quotients on the same complex orbifold. As $H^2_{dR}(M)$ is
$\ell$-dimensional, it is natural to ask whether $\lamc$ effectively
parametrizes the K\"ahler cone of $(M,J)$.
\end{rem}

The characteristic function $\chi=\chi_{N,\g,\lamc}$ of $N=N_{\g_o,\lamc_o}$
with respect to an arbitrary Levi pair $(\g,\lamc)$ is given by $\chi(z)
=(\iota\transp\circ\bH_z\circ\iota_o)^{-1}(\lamc)\in\g_o$, where we tacitly
omit the inclusion $\iota_o\colon\g_o\sub\ab$.

\begin{rem} The function $\chi\colon N\to \g_o$ is determined by $\bH(\chi)
\restr\g=\lamc$; since $\bH=2\delta\mm$, it is linear in $\mm$, which implies
$\chi$ is a rational function of $\mm$. Now for any $v\in\g_o$ and $z\in N$,
$\frac12\bH(\sum_{s\in\cS} e_s)= \sum_{s\in\cS}\ip{\sigma_s(z),v}=\lamc_o(v)$,
and so the characteristic function $\chi^o$ of the canonical Levi pair
$(\g_o,\lamc_o)$ is characterized by
$\iota_o\transp\bH_z\bigl(\chi^o(z)-\frac12\sum_{s\in\cS} e_s\bigr)=0$.
\end{rem}
\begin{prop} If $\sum_{s\in\cS} u^o_s=0$ then $2\chi^o_s=1$ for all $s\in\cS$.
\end{prop}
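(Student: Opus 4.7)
The plan is to recognize that the hypothesis $\sum_{s\in\cS} u^o_s=0$ is exactly the condition needed for $\tfrac12\sum_{s\in\cS} e_s$ itself to be the characteristic function $\chi^o$. Since $u^o_s=\ul^o(e_s)$, the hypothesis rewrites as $\ul^o\bigl(\sum_{s\in\cS}e_s\bigr)=0$, i.e.\ $\sum_{s\in\cS}e_s\in\ker\ul^o=\g_o$. Therefore the element $w:=\tfrac12\sum_{s\in\cS} e_s$ lies in $\g_o$, and its $s$-th coordinate in the basis $(e_s)_{s\in\cS}$ of $\ab$ is exactly $\tfrac12$, which is the value we want to obtain for $\chi^o_s$.

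Next, I would invoke the characterization of $\chi^o$ recorded in the preceding remark: it is the element of $\g_o$ satisfying
\[
\iota_o\transp\bH_z\Bigl(\chi^o(z)-\tfrac12\sum_{s\in\cS} e_s\Bigr)=0.
\]
Clearly the candidate $\chi^o(z):=w=\tfrac12\sum_{s\in\cS}e_s\in\g_o$ satisfies this equation trivially, because the argument of $\bH_z$ already vanishes. So once uniqueness of $\chi^o$ in $\g_o$ is secured, we are done.

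The one point requiring a short justification is uniqueness of the solution in $\g_o$: I need the composite $\iota_o\transp\circ\bH_z\circ\iota_o\colon\g_o\to\g_o^*$ to be an isomorphism. But this is precisely what was established in the preceding proposition (showing $(\g_o,\lamc_o)$ is a positive Levi pair): there it was proved that $\iota_o\transp\circ\bH_z\circ\iota_o$ sends the strictly convex cone $\cC_*\sub\g_o$ onto its dual, and in particular is positive definite, hence invertible. Thus the candidate agrees with $\chi^o$, and $2\chi^o_s=1$ for every $s\in\cS$.

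There is no real obstacle here: the whole content of the proposition lies in spotting that the distinguished vector $\tfrac12\sum e_s\in\ab$ happens to sit inside $\g_o$ under the stated combinatorial hypothesis, and then reading off the characterization of $\chi^o$ from the remark. I would write the argument in just a few lines.
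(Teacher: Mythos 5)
Your proof is correct and is essentially the paper's own argument: the hypothesis $\sum_{s\in\cS}u^o_s=0$ means $\sum_{s\in\cS}e_s\in\ker\ul^o=\g_o$, and then the characterization $\iota_o\transp\bH_z\bigl(\chi^o(z)-\tfrac12\sum_{s\in\cS}e_s\bigr)=0$ from the preceding remark forces $\chi^o=\tfrac12\sum_{s\in\cS}e_s$. Your extra remark on uniqueness (invertibility of $\iota_o\transp\circ\bH_z\circ\iota_o$, already implicit in the transversality condition and the preceding proposition) is a fine, harmless elaboration of what the paper leaves tacit.
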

\begin{proof} Since $\sum_{s\in\cS} e_s\in \g_o$, $\chi^o(z)=
\frac12\sum_{s\in\cS} e_s$ for all $z\in N$ by the characterization.
\end{proof}
If this assumption holds, we say $N$ is \emph{spherical}: $N$ is then
contained in a hypersphere in $\C_\cS$. (The equivariant topology of such
manifolds has been studied in~\cite{BM}, but here we focus on the geometry of
their Levi--K\"ahler quotients.) In the spherical case,
$\sigma_s=\ip{\mu^{\lamc_o},L^o_s}$, and the canonical Levi--K\"ahler quotient
metric agrees with the Delzant--Guillemin K\"ahler quotient. In particular,
the reduced metric on $\As\sub\torh^*$ is $\sum_{s\in\cS}(\d L^o_s)^2/2L^o_s$,
which is the pullback by $(\Ll^o)\transp$ of the metric $h_o=\sum_{s\in\cS}{\d
  \zeta_s}^2/2\zeta_s$ on $\ab^*$ , where we write $\zeta_s$ for the linear
function $\zeta_s(\xi)=\xi_s$ on $\ab^*$ corresponding to $e_s\in\ab$ (thus
$\d\zeta_s$ is $e_s$, viewed as a constant $1$-form).

In order to compute the reduced metric for any Levi pair $(\g,\lamc)$, we
observe that the bundle $\gM^o\sub T\ab^o=\ab^o\times\ab^*$ (with
$\gM=\mm^*\gM^o$) is the orthogonal complement to
$\ab^o\times(\ul^o)\transp(\tor^*)$ with respect to $h_o$:
\begin{equation*}
\sum_{s\in\cS}\frac{\d\zeta_s((\xi_sv_s)_{s\in\cS}) \d\zeta_s(w\circ\ul^o)}
{\zeta_s(\xi)}=\sum_{s\in\cS} v_s (w\circ\ul^o)_s= w(\ul^o(v))=0.
\end{equation*}
Hence by Remark~\ref{r:Kq}, we have the following result.

\begin{thm}\label{t:sph-red-metric} Let $(\g,\lamc)$ be a positive Levi pair
on a spherical quadric $N=N_{\g_o,\lamc_o}$. Then the reduced metric of the
Levi--K\"ahler structure is the pullback by $\Ll\transp\colon\Pol_{\g,\lamc}\to
\ab^o$ of the restriction of $h_o$ to $\ab^o\times(\ul^o)\transp(\tor^*)\sub
T\ab^o$ \textup(extended by zero on $\gM^o\sub T\ab^o$\textup).
\end{thm}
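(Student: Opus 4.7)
My plan is to invoke Remark~\ref{r:Kq} together with the $h_o$-orthogonality computation from the paragraph immediately preceding the theorem statement. By Remark~\ref{r:Kq}, the Levi--K\"ahler reduced metric on $M = N/G$ equals the pullback via $\Ll\transp \colon \As \to \ab^*$ of the restriction to the affine slice $\Ll\transp(\As) = (\iota\transp)^{-1}(\lamc)$ of a metric tensor $\bG$ on $\mu(U_\Ds^{o,+}) \sub \ab^*$. This $\bG$ arises as the horizontal part of an admissible $\Ab$-invariant, $\Omega^\Ds$-compatible extension $\hat g$ of the Levi--K\"ahler structure to $U_\Ds^+$, and only its restriction to the slice is relevant for the quotient metric.

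For the canonical Levi pair $(\g_o, \lamc_o)$, the spherical hypothesis $\sum_{s\in\cS} u^o_s = 0$ forces $2\chi^o_s = 1$, which by~\eqref{eq:Ds-metric} gives $g_\cS|_\Ds = h_{\Ds, \lamc_o}$, so the flat K\"ahler metric $g_\cS$ on $\C_\cS$ itself serves as an admissible $\hat g$. The action-angle decomposition~\eqref{eq:flat-metric} exhibits $g_\cS = \mm^* h_o + \bH(\d\ang,\d\ang)$ with $h_o = \sum_{s\in\cS} \d\zeta_s^2/(2\zeta_s)$, so $\mm$ is a Riemannian submersion and the induced $\bG$ on $\ab^*_+$ is precisely $h_o$. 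Combined with the $h_o$-orthogonal splitting $T_\zeta\ab^o = \gM^o_\zeta \oplus (\ul^o)\transp(\tor^*)$ just verified, this yields the claimed formula when $\g = \g_o$: the linear part of $\Ll\transp(\As)$ is $\ul\transp(\tor^*) = (\ul^o)\transp(\tor^*)$, so pulling back $h_o$ by $\Ll\transp$ is the same as pulling back its restriction to $(\ul^o)\transp(\tor^*)$ extended by zero on $\gM^o$.

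For a general positive Levi pair $(\g, \lamc)$, the admissible $\hat g$ cannot be $g_\cS$ since $h_{\Ds, \lamc} \neq h_{\Ds, \lamc_o}$ when $\lamc \neq \lamc_o$; instead, one constructs $\hat g$ via the generalized K\"ahler cone on $U_\Ds^+ \cong N \times C_+$ with a suitable inner product on $\g_o$. The induced $\bG$ differs from $h_o$, yet the spherical rigidity ensures that its restriction to the slice $\Ll\transp(\As)$ modulo $\gM$ still coincides with the $(\ul^o)\transp(\tor^*)$-component of $h_o$, which is exactly what the ``extension by zero on $\gM^o$'' in the theorem statement encodes. Combining with the orthogonal decomposition gives the formula in full generality. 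The principal obstacle is this last step: the tensor $\bG$ depends genuinely on the Levi pair, and verifying that its slice behaviour is universally that of $h_o$ extended by zero on $\gM^o$ is a nontrivial consequence of the spherical hypothesis, used through the freedom in $\hat g$ afforded by Remark~\ref{r:Kq} and the degree-one homogeneity of $\bG$.
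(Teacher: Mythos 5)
Your set-up (Remark~\ref{r:Kq} plus the $h_o$-orthogonality of $\gM^o$ and $\ab^o\times(\ul^o)\transp(\tor^*)$, and the observation that sphericity gives $2\chi^o_s=1$ so that the flat metric realizes the canonical quotient with $\bG=h_o$) is indeed the frame the paper uses. But your third paragraph concedes rather than proves the actual content of the theorem: for a general positive Levi pair $(\g,\lamc)$ you assert that ``spherical rigidity ensures'' that the slice restriction of $\bG$ is universally $h_o$ restricted to $(\ul^o)\transp(\tor^*)$ and extended by zero on $\gM^o$ --- but that assertion \emph{is} the statement to be proved. You neither verify that your candidate $\hat g$ (a generalized K\"ahler cone metric built from an inner product on $\g_o$) satisfies the admissibility condition of Remark~\ref{r:Kq} for the pair $(\g,\lamc)$ (namely that its pullback to $\mu_\g^{-1}(\lamc)\cong N$ is $h_{\Ds,\lamc}$ on $\Ds$, orthogonally to $\Rb\g$), nor compute its $\bG$ on the slice directions $\ul\transp(\tor^*)$. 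Note also that the flat metric itself is \emph{not} admissible for general $(\g,\lamc)$: on the level set one picks up $\d\chi$-terms and the orthogonality $\Rb\g\perp\Ds$ fails, so some argument replacing it is genuinely required. Even your intermediate claim ``when $\g=\g_o$'' overreaches slightly, since $2\chi_s=1$ only holds for $\lamc=\lamc_o$; for other $\lamc$ the characteristic function changes and $g_\cS|_\Ds\neq h_{\Ds,\lamc}$ on $N$.

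The missing mechanism is the one the paper exploits (and spells out in the proof of Theorem~\ref{t:symplectic-potential}): on $N$ one has $\Ll\transp\circ\mu^\lamc=(2\chi)\cdot\mm$ componentwise, with the characteristic function $\chi$ taking values in $\g_o$; the $h_o$-duals of the $\gM^o$-directions are $\sum_s v_s\,\d\zeta_s$ with $v\in\g_o$, whose pullback by $\mm$ vanishes on $TN$ because $\iota_o\transp\mm$ is constant on $N$; and the $\gM^o$-projected part of $h_o$ is invariant under exactly these $\g_o$-rescalings relating $\mm$ to $\Ll\transp\circ\mu^\lamc$, so the $\chi$-dependence (including the $\d\chi$ contributions) cancels and the horizontal metric $h_{\Ds,\lamc}$ on $J(\Rb{}\cap\Ds)$ pushes down to the stated degenerate form of $h_o$ on $\Pol_{\g,\lamc}$. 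Appealing only to degree-one homogeneity of $\bG$ (i.e.\ overall scaling) is too weak for this: one needs the behaviour under the full $\ell$-parameter family of $\gM^o$-rescalings, and verifying that cancellation is precisely the work your proposal leaves undone.
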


\begin{ex}\label{ex:bochner-flat} The weighted projective space $\C P^m_{\bf a}$
of weight ${\bf a}=(a_0, \ldots, a_m)\in \N^{m+1}$ has the structure of a
toric symplectic orbifold whose Delzant polytope is a labelled simplex
$(\Pol,u)$. The corresponding momentum level set $N_\lamc \sub \C^{m+1}$ is CR
$G$-equivariantly isometric to the sphere $\Sph^{2m+1}\sub \C^{m+1}$, acted by
the ${\bf a}$-weighted diagonal $\Sph^1$ action. By a result of
S.~Webster~\cite{webster}, Levi--K\"ahler reduction defines on $\C P^m_{\bf
  a}$ a homothety class of {\it Bochner-flat} K\"ahler metrics~\cite{Bryant,
  david-gauduchon}, which are {\it extremal} K\"ahler metrics~\cite{calabi}.
The Bochner-flat metric coincides with the Guillemin symplectic-K\"ahler
reduction if and only if ${\bf a}=(1, \ldots, 1)$, i.e., only on $\C P^m$, see
e.g., \cite{Abreu1}. Thus one can obtain Levi--K\"ahler quotients of the same
(flat) CR structure on $\Sph^{2m+1}\sub \C^{m+1}$ on any labelled rational
simplex, by varying the subgroup $G\cong \Sph^1$ within a fixed maximal torus
$\T^{m+1}$ in the group $\mathrm{Aut}_{\mathrm{CR}}(\Sph^{2m+1})
=\mathrm{PU}(m+1,1)$ of CR transformations of $\Sph^{2m+1}$.

Locally, the construction is defined by a one-dimensional subspace $\g \sub
\ab$, generated by a non-zero element $v\in \ab$, with corresponding vector
field $K_v$ transverse to the CR distribution on $\Sph^{2m+1}$, and the choice
of a contact form $\eta^v$ with $\eta^v(K_v)=1$ and $\ker\eta^v=\Ds$.  In this
case $(K_v, \eta_v, \Ds, J)$ defines a {\it Sasaki structure} compatible
with the standard CR structure $(\Ds,J)$ on $\Sph^{2m+1}$, see
e.g.~\cite{BGS}. The horizontal K\"ahler geometry $(\d\eta^v, \Ds, J)$ may be
described by a compatible toric metric over a (perhaps not rational) labelled
simplex (see \cite{Eveline2}).  The fact that $\g$ is the Lie algebra of a
subgroup $G\leq\T^{m+1}$ implies a rationality condition on $\g$, hence on the
corresponding labelled simplex.
\end{ex}

\section{Levi--K\"ahler reduction for products of spheres}\label{s:lkr-sph}

Our main motivation for the study of toric Levi--K\"ahler quotients is the
construction of K\"ahler metrics on toric varieties with ``nice'' curvature
properties. For this we observe that CR submanifolds of $\C_\cS$ have local
invariants, and so one approach to constructing Levi--K\"ahler quotients with
nice curvature is to start from a nice CR submanifold $N$ of $\C_\cS$. In
particular, when $N$ is a product of spheres, it is flat as a CR manifold.
Hence we might hope that Levi--K\"ahler quotients of products of spheres have
interesting curvature properties.

\subsection{Products of simplices and products of spheres}

We specialize the set-up of~\S\ref{s:lkr-toric} as follows. Fix positive
integers $\ell$ and $m_1,m_2,\ldots m_\ell$, and let $\cI=\{1,2,\ldots\ell\}$,
$I_i=\{0,1,\ldots m_i\}$, $\cS=\{(i,r)\st i\in\cI$ and $r\in I_i\}$. Let
$m=\sum_{i\in\cI} m_i$ and $d=m+\ell$ as usual.  Thus $\C_\cS\cong
\C^{m_1+1}\times \C^{m_2+1}\times\cdots \times \C^{m_\ell+1} \cong \C^d$ and
$\ab$ has a natural subspace $\g_o= \{x\in \ab\st x_{iq}=x_{ir}$ for all
$i\in\cI$ and $q,r\in I_i\}$. We denote by $x_i$ the common value of the
$x_{ir}$ and thus identify $\g_o$ with $\R^\ell$.  On $\g_o$ we have a natural
linear form $\lamc_o$ sending $(x_1,x_2,\ldots x_\ell)$ to
$x_1+x_2+\cdots+x_\ell\in \R$, and we let $\Ll^o\colon \ab\to
\torh=\ab/\ker\lamc_o$ and $\ul^o\colon \ab\to \tor=\ab/\g_o$ be the quotient
maps.

Under the canonical identification of $\ab^*$ with $\ab$, $\torh^*$ is
isomorphic to the subspace of $\xi=(\xi_s)_{s\in \cS}$ such that $\sum_{r\in
  I_i} \xi_{ir}$ is independent of $i$, this constant being the natural
projection $\torh^*\to \R$. Hence $\tor^*$ is a product (over $i\in\cI$) of
the codimension one linear subspaces of $\R^{m_i+1}$ where the coordinate sum
is zero, and $\cA$ is the corresponding product of affine subspaces $\cA_i$
where the coordinate sum is one.

\begin{notn}[The faces of $\Sig$] \label{facesSIGMA} The polytope $\Sig$
in $\torh^*$ defined by $\Ll^o$ is a product of simplices $\Sig_i$ in the
affine spaces $\cA_i$. In the following, we sometimes write $i(s)$ and $r(s)$
for the components of $s\in\cS$.
\begin{bulletlist}
\item The facets of $\Sig_i$ are $\Fa^i_r=\{\xi_{ir} =0\}\cap \Sig_i$ for $r\in
  I_i$.
\item The vertices of $\Sig_i$ may also be indexed by $r\in I_i$: we let
  $p^i_r$ be the unique vertex of $\Sig_i$ that is not in $\Fa^i_r$.
\item The vertices of $\Sig$ are thus indexed by $(r_1,\ldots r_\ell)\in
  I_1\times \cdots\times I_\ell$:
\[
p_{(r_1,\ldots r_\ell)} = (p^1_{r_1},\ldots p^\ell_{r_\ell}).
\]
\item Each facet $\Fa^i_r$ (for $r\in I_i$) of the simplex $\Sig_i$ determines
    a facet
\[
\Fa_{ir}= \Sig_1\times\cdots\times\Sig_{i-1}\times \Fa^i_r \times\Sig_{i+1}
\times\cdots\times\Sig_\ell
\]
of $\Sig$ and a corresponding inward normal $u^o_{ir}=\d L^o_{ir}$.
\end{bulletlist}
\end{notn}

The corresponding CR submanifold of $\C_\cS$ is
\begin{equation*}
N=N_{\Ll^o}=\{z\in \C_\cS\st {\textstyle\sum_{r\in I_i} \sigma_{ir}}=1
\text{ for all }i\in \cI\},
\end{equation*}
where $\sigma_{ir}=\frac12|z_{ir}|^2$. Thus $N\cong \Sph^{2m_1+1}\times \cdots
\times \Sph^{2m_\ell+1}$. As in~\S\ref{s:lkr-quadrics}, $N$ is the level set, at
the regular value $\lamc_o$, of the momentum map $\mu_o=\iota_o\transp\mm$. Thus
\begin{equation*}
\mu_o (z)=(\mm^1(z), \ldots \mm^{\ell}(z)), \quad\text{where}\quad \mm^i(z)=
\sum_{r\in I_i}\sigma_{ir}(z)=\sum_{r\in\ I_i} \tfrac12 |z_{ir}|^2,
\end{equation*}
and we denote $z=(z^1, \ldots z^{\ell})$ with $z^i=(z_{i0}, \ldots, z_{im_i})$
the linear coordinates of $\C_\cS$.  These data are associated to the Delzant
construction for the product $\Sig= \Sig_1\times\cdots\times \Sig_{\ell}
\sub\cA$ of standard Delzant simplices $\Sig_i\sub\cA_i$.  More specifically,
$\g_o$ is the Lie algebra of a subtorus $G_o$ of $\Ab$, which acts freely on
$N$ preserving the CR structure $(\Ds, J)$, with quotient space $(M,J) = \C
P^{m_1} \times \cdots \times \C P^{m_{\ell}}$.  The Lie algebra $\g_o$ of
$G_o$ defines a Reeb foliation on $N\sub \C_\cS$ with induced horizontal Levi
structure consisting of scales of product of Fubini--Study metrics.

\begin{thm}\label{t:product-spheres}
Let $N= \Sph^{2m_1+1} \times \cdots \times \Sph^{2m_{\ell}+1} \sub \C_\cS$ be a
product of standard CR spheres. Then for a pair $(\g,\lamc)$, defined by
$\Ll\colon\ab\to\torh$, with associated polytope $\Pol_{\g,\lamc}\sub\As$, the
following are equivalent.
\begin{numlist}
\item $(\g,\lamc)$ is a positive Levi pair, i.e., defines a Levi--K\"ahler
  structure on $N$.
\item $(\g,\lamc)$ is a Levi pair \textup(i.e., $\g$ satisfies
  Condition~\textup{\ref{cond:trans}}\textup) whose horizontal momentum map 
$\mu^\lamc\colon N\to \As$ has $\im\mu^\lamc=\Pol_{\g,\lamc}$.
\item $(\g,\lamc)$ satisfies Condition~\textup{\ref{cond:comb}}, i.e.,
  $\Pol_{\g,\lamc}$ is a compact convex polytope with the same combinatorial
  type as $\Sig$.
\end{numlist}
\end{thm}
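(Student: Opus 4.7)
The plan is to leverage Theorem~\ref{t:toric}, which asserts, for any toric CR submanifold $N$ of $\C_\cS$ and Levi pair $(\g,\lamc)$, the equivalence of~(ii) with the conjunction of~(i) and~(iii). Hence the theorem reduces to proving the two implications (i)~$\Rightarrow$~(iii) and (iii)~$\Rightarrow$~(i), which are specific to products of spheres and both rest on a structural identity. By the spherical quadric formulation of~\S\ref{s:lkr-quadrics}, the characteristic function $\chi := \chi_{N,\g,\lamc}$ takes values in $\g_o \cong \R^\ell$, so $\chi_s(z)$ depends only on the factor index $i = i(s)$; write $\chi_i := \chi_{(i,r)}$. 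Summing $L_s(\mu^\lamc) = 2\sigma_s\chi_s$ from~\eqref{eq:M-momentum} over $s = (i,r)$ with $r \in I_i$, and using $\sum_{r \in I_i} \sigma_{ir} = 1$, yields the central formula
\begin{equation*}
2\chi_i(z) = L_i^{\mathrm{tot}}(\mu^\lamc(z)), \qquad L_i^{\mathrm{tot}} := \textstyle\sum_{r \in I_i} L_{ir}.
\end{equation*}

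For (i)~$\Rightarrow$~(iii), I would test $h_{\Ds,\lamc}$ on nonzero vectors $X \in \Ds$ supported in a single sphere factor $\Sph^{2m_i+1}$, available since $m_i \geq 1$. Formula~\eqref{eq:Ds-metric} then reduces to $h_{\Ds,\lamc}(X,X) = 2\chi_i \cdot g_{\mathrm{flat},i}(X,X)$, with $g_{\mathrm{flat},i}$ the flat metric on the factor, forcing $\chi_i > 0$. Hence $\chi_s > 0$ and $L_s(\mu^\lamc) = 2\sigma_s\chi_s \geq 0$ on $N$, giving $\mu^\lamc(N) \subseteq \Pol_{\g,\lamc}$. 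By Theorem~\ref{t:polytope} the image $\mu^\lamc(N)$ is a compact simple polytope of combinatorial type $\Cb_N = \Cb_\Sig$, defined by affine functions $\varepsilon_s L_s$ with signs $\varepsilon_s \in \{\pm 1\}$; the inclusion $\mu^\lamc(N)\subseteq\Pol_{\g,\lamc}$ rules out $\varepsilon_s = -1$, so $\mu^\lamc(N) = \Pol_{\g,\lamc}$ and (iii) follows.

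For (iii)~$\Rightarrow$~(i), Condition~\ref{cond:comb} first forces $L_i^{\mathrm{tot}} > 0$ on $\Pol_{\g,\lamc}$: vanishing of the non-negative sum $\sum_r L_{ir}$ at a polytope point would place it in $\bigcap_{r \in I_i}\Fa_{ir}$, an intersection of facets that is empty in $\Sig$ and therefore in $\Pol_{\g,\lamc}$ by the combinatorial equivalence. Thanks to the central formula, positivity of $h_{\Ds,\lamc}$ now reduces to showing $\mu^\lamc(N) \subseteq \Pol_{\g,\lamc}$. Since $\mu^\lamc$ is $\Ab$-invariant, it descends to a map $\bar\mu \colon N/\Ab = \Sig \to \As$ characterized implicitly by $L_s(\bar\mu(\sigma)) = \sigma_s L_{i(s)}^{\mathrm{tot}}(\bar\mu(\sigma))$ for all $s \in \cS$; my plan is to identify $\bar\mu$ as the inverse of the explicit assignment $\varphi \colon \Pol_{\g,\lamc} \to \Sig$, $\xi \mapsto (L_s(\xi)/L_{i(s)}^{\mathrm{tot}}(\xi))_{s \in \cS}$, which is well-defined by positivity of $L_i^{\mathrm{tot}}$ on $\Pol_{\g,\lamc}$ and lands in $\Sig$ since the coordinates sum to $1$ in each factor. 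This yields $\mu^\lamc(N) = \bar\mu(\Sig) = \Pol_{\g,\lamc}$ and hence (i).

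The principal obstacle is verifying that $\bar\mu$ and $\varphi$ are mutually inverse: this amounts to showing that the $m \times m$ implicit linear system defining $\bar\mu(\sigma)$ has a unique solution in $\As$ for every $\sigma \in \Sig$, and that the solution matches the face correspondence between $\Sig$ and $\Pol_{\g,\lamc}$ supplied by Condition~\ref{cond:comb}. This is a finite-dimensional linear algebra exercise requiring careful bookkeeping of how the combinatorial hypothesis constrains the affine functions $(L_s)_{s \in \cS}$ on each face of $\Sig$, with the vertex case serving as the inductive base: at the vertex $(r_1,\ldots,r_\ell)$ of $\Sig$, the defining equations collapse to $L_{ir}(\bar\mu) = 0$ for $r \neq r_i$, whose unique solution is the vertex $p_{(r_1,\ldots,r_\ell)} \in \Pol_{\g,\lamc}$ by Condition~\ref{cond:comb}.
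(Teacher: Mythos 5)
Your reduction via Theorem~\ref{t:toric} and your treatment of (i)$\Rightarrow$(iii) are sound and essentially the paper's own: testing $h_{\Ds,\lamc}$ on vectors tangent to a single sphere factor is exactly Lemma~\ref{l:positivity}, and the sign choice in Theorem~\ref{t:polytope} is then forced by $L_{ir}(\mu^\lamc)=2\chi_i\sigma_{ir}\geq 0$, which is~\eqref{eq:PS-momentum}. The gaps are in (iii)$\Rightarrow$(i). First, statement (iii) assumes only Condition~\ref{cond:comb}; it does \emph{not} assume that $\g$ acts transversally. Yet your whole argument is phrased in terms of $\chi$, $\eta^\lamc$ and $\mu^\lamc$, none of which exist until Condition~\ref{cond:trans} is known. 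Deducing transversality from the combinatorial hypothesis is a genuine step (Lemma~\ref{l:cond-trans} in the paper): one must show that the $d\times d$ matrix built from the momenta $\sigma_s$ and the normals $u_s$ is invertible at every $z\in N$, which the paper does by expanding the determinant along the first $\ell$ rows and showing that all the minors $\det B_{(r_1,\ldots,r_\ell)}$ contribute with a common sign, using the Delzant condition at the vertices of $\Pol_{\g,\lamc}$, the combinatorial equivalence with $\Sig$, and the relations $\sum_{r\in I_j}u^o_{jr}=0$. Your proposal never addresses this, and without it the ``central formula'' $2\chi_i=\sum_{r\in I_i}L_{ir}(\mu^\lamc)$ is not even well posed under hypothesis (iii).

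Second, having reduced (iii)$\Rightarrow$(i) to $\im\mu^\lamc\subseteq\Pol_{\g,\lamc}$, you propose to prove this by showing that $\varphi(\xi)=\bigl(L_s(\xi)/\sum_{r\in I_{i(s)}}L_{i(s)r}(\xi)\bigr)_{s\in\cS}$ and the descended map $\bar\mu\colon\Sig\to\As$ are mutually inverse, but you yourself flag the required verification --- unique solvability in $\As$ of the implicit linear system for every $\sigma\in\Sig$, together with the matching of faces --- as ``the principal obstacle'', checking only the vertex case. That verification is precisely the hard content of the implication (the relevant matrix is essentially the one from Lemma~\ref{l:cond-trans}, now with coefficients degenerating on lower strata), so as written the proof is incomplete rather than merely terse. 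For comparison, the paper avoids this linear algebra with a soft argument: by~\eqref{eq:PS-momentum} the image of $\mu^\lamc$ contains every vertex of $\Pol_{\g,\lamc}$, and for each $i$ the sign of $L_{ir}(\mu^\lamc(z))$ is independent of $r\in I_i$, so $\im\mu^\lamc$ lies in the union of the sign chambers $\Pol_\cJ$; connectedness of $N$ then forces $\im\mu^\lamc\subseteq\Pol_{\g,\lamc}$, and if some $\chi_i$ vanished at a point, all the facets $L_{ir}=0$, $r\in I_i$, would have a common point, contradicting Condition~\ref{cond:comb}. To complete your write-up you should either import that connectedness argument or actually carry out the deferred determinant computation; in either case the transversality step must be supplied first.
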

The proof makes use a couple of Lemmas.

\begin{lemma}\label{l:cond-trans} If $(\g,\lamc)$ satisfies
Condition~\textup{\ref{cond:comb}}, then it satisfies
Condition~\textup{\ref{cond:trans}}.
\end{lemma}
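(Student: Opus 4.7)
The plan is to reduce both halves of Condition~\ref{cond:trans} to a single injectivity claim and then prove it via a sign-consistency argument along edges of $\Pol_{\g,\lamc}$. Using the angular coordinates of~\eqref{eq:flat-metric}, $JK_v = -\sum_s 2\sigma_s v_s\,\del/\del\sigma_s$ while $T_zN = \bigcap_{i \in \cI} \ker\d(\sum_{r \in I_i}\sigma_{ir})$, so $K_v|_z \in \Ds_z$ iff $\sum_{r \in I_i}\sigma_{ir}(z)v_{ir} = 0$ for each $i$. Introducing the linear map $\phi_\xi \colon \ab \to \g_o^*$ given by $\phi_\xi(v)_i = \sum_{r \in I_i}\xi_{ir}v_{ir}$ for $\xi \in \Sig$, both parts of Condition~\ref{cond:trans}---part (i) being the special case where $\xi$ is a vertex of $\Sig$ (at which $\xi_s \in \{0,1\}$)---reduce to injectivity of $\phi_\xi|_\g$ for every $\xi \in \Sig = \mm(N)$; since $\dim\g = \ell = \dim\g_o^*$, this amounts to showing $\det(\phi_\xi|_\g) \neq 0$.

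Fix a basis of $\g$ and let $V$ be the resulting $d \times \ell$ matrix with rows indexed by $\cS$ in lexicographic order. Factoring $\phi_\xi = \iota_o\transp \circ D(\xi)$, with $D(\xi)$ the diagonal endomorphism $e_s \mapsto \xi_s e_s$ on $\ab$, the Cauchy--Binet formula gives
\[
\det(\phi_\xi|_\g) \;=\; \sum_T (\det V_T)\,{\textstyle\prod_{s \in T}}\xi_s,
\]
summed over \emph{transversal} subsets $T \subset \cS$ (those meeting each block $\{(i,r) : r \in I_i\}$ in exactly one element); here $V_T$ is the $\ell \times \ell$ minor of $V$ with rows indexed by $T$. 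Condition~\ref{cond:comb} places these transversal subsets in bijection with the vertices $q_{T^c}$ of the simple polytope $\Pol_{\g,\lamc}$, and simplicity at each vertex forces $\det V_T \neq 0$.

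The principal step is to show that all $\det V_T$ share a common sign, which I would establish by induction along edges of $\Pol_{\g,\lamc}$. Two transversal $T, T'$ differ in a single element $s$ vs $s' \in \{(i_0, r) : r \in I_{i_0}\}$ precisely when $q_{T^c}$ and $q_{T'^c}$ are joined by an edge. Since $T^c \cap T'^c \in \Cb_\Sig$ is a codimension-one face, Condition~\ref{cond:comb} makes $\{u_t : t \in T^c \cap T'^c\}$ linearly independent in $\tor$, so the $(\ell+1) \times \ell$ matrix $W$ with rows indexed by $T \cup \{s'\}$ has rank $\ell$. Its unique (up to scale) row-null covector takes the form $a_t = \alpha(u_t)$ for some $\alpha \in \tor^*$ annihilating $\{u_t : t \in T^c \cap T'^c\}$, hence $\alpha$ spans the 1-dimensional edge direction of $\Pol_{\g,\lamc}$ between $q_{T^c}$ and $q_{T'^c}$. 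Convexity forces $u_s(\alpha)$ and $u_{s'}(\alpha)$ to have opposite signs, so $a_s a_{s'} < 0$, and Cramer's rule applied to the $\ell \times \ell$ minors of $W$ yields $\det V_T / \det V_{T'} = -a_{s'}/a_s > 0$ (the single sign $(-1)^{p(s') - p(s)} = -1$ coming from the lex-adjacency of $s$ and $s'$ in $T \cup \{s'\}$, since no other element of $\{(i_0, r) : r \in I_{i_0}\}$ lies in $T$). Thus $\det V_T$ and $\det V_{T'}$ agree in sign, and connectedness of the 1-skeleton of $\Sig$ gives a common sign throughout.

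To finish, every $\xi \in \Sig$ has $S(\xi) \in \Cb_\Sig$, so one can pick a transversal $T$ disjoint from $S(\xi)$; then $\prod_{s \in T}\xi_s > 0$ and combined with the uniform sign of $\det V_T$ this gives $\det(\phi_\xi|_\g) \neq 0$ on all of $\Sig$, establishing Condition~\ref{cond:trans}. The principal obstacle is the sign-consistency step, where one must reconcile the Cramer-rule signs from algebraic minor expansions of $W$ with the convex-geometric signs coming from the edge directions of $\Pol_{\g,\lamc}$---ultimately an orientation-compatibility statement between the Plücker description of $\g \subset \ab$ and the convex structure of $\Pol_{\g,\lamc}$.
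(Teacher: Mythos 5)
Your proposal is correct, and its skeleton matches the paper's: both arguments reduce Condition~\ref{cond:trans} to the nonvanishing, for each point of the momentum image, of a determinant that expands as a sum over transversal subsets $T\sub\cS$ of nonnegative products $\prod_{s\in T}\xi_s$ times ``vertex minors'', and both then hinge on these minors carrying a coherent sign. The packaging differs: you apply Cauchy--Binet to $\iota_o\transp\circ D(\xi)$ composed with a kernel-basis matrix $V$ of $\g$, whereas the paper Laplace-expands the full $d\times d$ matrix $A_z$ of $v\mapsto\bigl(\phi_{\mm(z)}(v),\ul(v)\bigr)$; by complementary-minor duality for $0\to\g\to\ab\to\tor\to0$ these are the same computation, your ``common sign of $\det V_T$'' being the paper's ``$(-1)^{\sum_j r_j}\det B_{(r_1,\ldots,r_\ell)}$ of constant sign''. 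The genuine divergence is in the sign-coherence step: the paper asserts that the statement for $\Pol_{\g,\lamc}$ follows from the model case $\Sig$, which it verifies using $\sum_{r\in I_j}u^o_{jr}=0$, while you prove it intrinsically on $\Pol_{\g,\lamc}$ by walking along edges---adjacent vertices give an $(\ell+1)\times\ell$ submatrix of $V$ whose left null covector is $t\mapsto\alpha(u_t)$ with $\alpha$ the edge direction, convexity forces the two relevant entries to have opposite signs, and the minor/null-covector relation (with the adjacency sign $-1$) transfers the sign from $\det V_T$ to $\det V_{T'}$. This buys a self-contained justification of exactly the step the paper leaves terse, at the cost of a longer argument. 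Two small blemishes, neither fatal: the face $\Fa_{T^c\cap T'^c}$ is an edge (dimension one), not ``codimension one''; and the aside identifying part (i) of Condition~\ref{cond:trans} with the vertex case is unnecessary (and as stated needs a word of justification), since injectivity of $\phi_\xi|_\g$ for every $\xi\in\Sig$ already yields both parts: $K_{v,z}=0$ and $K_{v,z}\in\Ds_z$ each imply $\phi_{\mm(z)}(v)=0$.
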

\begin{proof} Condition~\ref{cond:trans}(i) holds since it only depends on
the combinatorics of $\Pol$.

Suppose that $\g$ does not satisfy Condition~\ref{cond:trans}(ii). Then there
exist $z\in N$ and $v=(x_s)\in \ab\setminus 0$ such that
$K_v(z)\in\Ds=\bigcap_{i\in\cI} \ker\d^c\mm^i$ and $v\in \g$, that is
\begin{equation}\label{syst(ii)}
\sum_{s\in\cS} x_s u_s =0\in \tor \;\;\mbox{ and }\;\;
\sum_{r\in I_i} x_{ir} \sigma_{ir} =0 \qquad \mbox{ for } i\in\cI,
\end{equation}
where $\sigma_s=\frac12 |z_s|^2$. As equations on $v=(x_s)$ for fixed $z$,
this system is a linear map from $\ab$ to $\R^\ell\oplus\tor$, which both have
dimension $d$.  We may write the $d\times d$-matrix $A=A_z$ of this linear map
as follows: for $j\in \cI$ the $j$-th row is
$\sigma_s\delta_{i(s)j}$, while the lower part $B$ of the matrix is the
$m\times d$-matrix, whose $s$-th column is $u_s$ for $s\in\cS$ (written with
respect to some basis of $\tor$). We compute the determinant of $A=A_z$ by
expanding along the first $\ell$ rows. The nonzero terms are all obtained
by choosing, for each $j\in \cI$, $r_j\in I_j$ to obtain a minor
\begin{equation*}
\pm\sigma_{1 r_1}\sigma_{2 r_2}\cdots \sigma_{\ell r_\ell} \det B_{(r_1,\ldots r_\ell)},
\end{equation*}
where $B_{(r_1,\ldots r_\ell)}$ is the submatrix of $B$ obtained by removing
the columns $u_{1 r_1},\ldots u_{\ell r_\ell}$. Up to an overall sign
(depending on $|I_j|$) each such minor contributes to $\det A$ with sign
$(-1)^{\sum_{j\in\cI} r_j}$. Hence to show $\det A\neq 0$, it suffices to
show that (for a fixed basis of $\tor$) $(-1)^{\sum_{j\in\cI} r_j} \det
B_{(r_1,\ldots r_\ell)}>0$ for all $(r_1,\ldots r_\ell)\in I_1\times\cdots\times
I_\ell$, because $\sigma_s\geq0$ and the products $\prod_{j\in\cI} \sigma_{jr_j}$
do not all vanish at the same time.

Since $\Pol$ has the same combibatorial type as $\Sig$, we know that the
colomns of $B_{(r_1,\ldots r_\ell)}$ are inward normals of the facets meeting at
the vertex $p_{(r_1,\ldots r_\ell)}$, see Notation~\ref{facesSIGMA}. These form a
basis by the Delzant condition on $\Pol$, and so it suffices to show that
$(-1)^{\sum_{j=1}^lr_j}$ times the wedge product of the columns of
$B_{(r_1,\ldots r_\ell)}$ has sign independent of $(r_1,\ldots r_\ell)$. This will
hold for $\Pol$ if it holds for $\Sig$, so it suffices to check that for
each $j\in \cI$, $(-1)^{r_j} u^o_{j0}\wedge\cdots \hat
u^o_{jr_j}\wedge\cdots u^o_{j m_j}$ (with the $u^o_{jr_j}$ factor omitted) is
independent of $r_j\in I_j$. Since $\sum_{r\in I_j} u^o_{jr}=0$, this is a
triviality.
\end{proof}
\begin{lemma}\label{l:positivity} Suppose $(\g,\lamc)$ satisfies
Condition~\textup{\ref{cond:trans}} and let $\chi\colon N\to \g_o\cong\R^\ell$
be the characteristic function of $(N,\g,\lamc)$. Then $(\g,\lamc)$ is a
positive Levi pair if and only if $\chi_1,\ldots \chi_\ell$ are positive.
\end{lemma}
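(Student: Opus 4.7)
The plan is to exploit the product structure of $N=\Sph^{2m_1+1}\times\cdots\times\Sph^{2m_\ell+1}$: its intrinsic CR distribution decomposes as $\Ds=\Ds_1\dsum\cdots\dsum\Ds_\ell$, where $\Ds_i$ is the standard contact distribution on the round sphere $\Sph^{2m_i+1}$, and the standard contact form on the $i$-th factor (viewed as a $1$-form on $\C_\cS$) is $\beta^i:=\sum_{r\in I_i}\d^c\sigma_{ir}$, whose associated round Levi form $h_i:=\d\beta^i(\cdot,J\cdot)\restr{\Ds_i}$ is positive definite. I would like to show that $h_{\Ds,\lamc}$ is diagonal in the decomposition $\Ds=\dsum_i\Ds_i$ with $h_{\Ds,\lamc}\restr{\Ds_i}=\chi_i\,h_i$; positive definiteness of $h_{\Ds,\lamc}$ then becomes literal positivity of $\chi_1,\ldots,\chi_\ell$.

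The key computation is that $\chi$ takes values in $\g_o$, which consists of those $x\in\ab$ with $x_{ir}$ independent of $r\in I_i$; writing $\chi=(\chi_1,\ldots,\chi_\ell)\in\R^\ell$ so that $\chi_s=\chi_{i(s)}$, one gets
\[
\eta^\lamc=\ip{\d^c\mm,\chi}=\sum_{i\in\cI}\chi_i\,\beta^i.
\]
The crucial observation is that $\beta^i\restr\Ds=0$ for every $i\in\cI$: on $\Ds_j$ with $j\neq i$ this is automatic because $\beta^i$ involves only coordinates of the $i$-th factor while $\Ds_j$ is tangent to the $j$-th factor, and on $\Ds_i$ it holds because $\beta^i$ is precisely the defining contact form of $\Ds_i$. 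Hence each term $\d\chi_i\wedge\beta^i$ vanishes on $\Ds$, leaving $\d\eta^\lamc\restr\Ds=\sum_{i\in\cI}\chi_i\,\d\beta^i\restr\Ds$. The same factor-separation argument shows that $\d\beta^i(X,Y)=0$ whenever $X$ or $Y$ lies in some $\Ds_j$ with $j\neq i$, so each summand $\d\beta^i\restr\Ds$ is supported on $\Ds_i\times\Ds_i$.

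Since $J$ preserves the decomposition $\Ds=\dsum_i\Ds_i$ (the CR structure on $N$ is a product), one obtains
\[
h_{\Ds,\lamc}=\bigoplus_{i\in\cI}\chi_i(z)\,h_i,
\]
with each $h_i$ positive definite on $\Ds_i$ pointwise. Positive definiteness of $h_{\Ds,\lamc}$ at every $z\in N$ is therefore equivalent to $\chi_i(z)>0$ for every $i\in\cI$ and every $z\in N$, which is the stated sign condition. The only potential subtlety in the argument is justifying that the cross-terms $\d\chi_i\wedge\beta^i$ drop out on all of $\Ds$ (and not merely on $\Ds_i$); this is exactly the step where the sphere-product hypothesis enters, beyond the mere surjectivity built into Condition~\ref{cond:trans}, and without it one would pick up derivative couplings in $\chi_i$ that would ruin the diagonal form.
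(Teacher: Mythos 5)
Your proposal is correct and follows essentially the same route as the paper: write $\eta^\lamc=\sum_{i\in\cI}\chi_i\,\d^c\mm^i$, use $\Ds\sub\ker\d^c\mm^i$ to kill the $\d\chi_i\wedge\d^c\mm^i$ cross-terms so that $\d\eta^\lamc\restr\Ds=\sum_i\chi_i\,\d\d^c\mm^i\restr\Ds$, and then exploit the splitting $\Ds=\bigoplus_i\Ds_i$ on which each $\d\d^c\mm^i$ is block-diagonal (vanishing on $\Ds_j$, $j\neq i$, and positive on $\Ds_i$) to conclude positivity iff all $\chi_i>0$. Your only addition is spelling out explicitly that each block is the positive definite round-sphere Levi form, which the paper leaves implicit.
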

\begin{proof} First observe that since $\Ds=\bigcap_{i\in\cI}\ker\d^c\mm^i$,
we have
\begin{equation}\label{eq:eta-c}
\eta^\lamc = \sum_{i\in\cI} \chi_i\, \d^c \mm^i, \quad
\d\eta^\lamc\restr\Ds = \sum_{i\in\cI} \chi_i\, \d\d^c \mm^i\restr{\Ds}.
\end{equation}
Moreover, $\Ds$ splits as a sum $\Ds=\bigoplus_{i\in\cI}\Ds_i$ where $\Ds_i$
is tangent the $i$-th sphere in the product $N= \prod_{i\in\cI} \Sph^{2m_i
  +1}$ (that is $\Ds_i =T\Sph^{2m_i +1}\cap JT\Sph^{2m_i +1}$). For $i\in\cI$,
$\d\d^c\mm^i$ is nondegenerate on $\Ds_i$, and if $j\neq i$, $\Ds_j\sub
\ker \d\d^c\mm^i$.  Thus $\d\eta^\lamc\restr{\Ds}$ defines a positive definite
metric iff $\chi_i>0$ for all $i\in \cI$.
\end{proof}

Before proving the theorem we need a bit more notation.  For each $j\in \cI$,
we define
\[
C_j^{\pm} :=\{\xi\in \As \st \forall\, r\in I_j,\;\pm L_{jr}(\xi)\geq 0\}
\]
where $C_j^{-}$ is potentially empty but $C_j^{+}$ is not, and $\Pol =
\bigcap_{j\in \cI} C_j^+$.

\begin{proof}[Proof of Theorem~\textup{\ref{t:product-spheres}}] By
Lemma~\ref{l:cond-trans}, we may assume Condition~\ref{cond:trans} holds.
Then the formula~\eqref{eq:M-momentum} for the induced momentum map
$\mu^\lamc$ here reduces to
\begin{equation}\label{eq:PS-momentum}
L_{ir}(\mu^\lamc(z))= 2\chi_i(z) \sigma_{ir}(z)
\end{equation}
where $L_{ir}=\Ll(e_{ir})$ for the standard basis $e_{ir}$ of $\ab$.

If $(\g,\lamc)$ is a positive Levi pair, the functions $\chi_i$ are positive by
Lemma~\ref{l:positivity} and then equation~\eqref{eq:PS-momentum} and
Theorem~\ref{t:polytope} imply that $\im\mu^\lamc=\Pol_{\g,\lamc}$; thus
(i)$\Rightarrow$(ii).

Now (ii)$\Rightarrow$(i)\&(iii) by Theorem~\ref{t:toric}, which also
shows (i)$\Rightarrow$(iii).

Finally, to prove (iii)$\Rightarrow$(i), it suffices, by
Lemma~\ref{l:positivity}, to show that the functions $\chi_i$ are positive.
First note the following consequences of equation~\eqref{eq:PS-momentum}.
\begin{itemize}
\item[(a)] $\im\mu^\lamc$ contains all the vertices of $\Pol=\Pol_{\g,\lamc}$.
  Indeed, each $z\in N$ having only one nonzero coordinates in each spherical
  factor is sent to a vertex of $\Pol$. Moreover, on the vertices of $\Pol$,
  $L_{s'}\geq 0$ for each $s'\in\cS$; thus equation~\eqref{eq:PS-momentum}
  implies that $\chi_i(z)\geq 0$ for any $z\in N$ such that $\mu^\lamc(z)$ is a
  vertex of $\Pol$.
\item[(b)] If $L_{ir}(\mu^\lamc(z))\geq 0$ (resp. $L_{ir}(\mu^\lamc(z))\leq 0$)
then for all $q\in I_i$, $L_{iq}(\mu^\lamc(z))\geq 0$ (resp. $L_{iq}(\mu^\lamc(z))
\leq 0$).  That is $\mu^\lamc(z)\in \bigcap_{j\in\cI} (C_j^+\cup C_j^-)$.
\end{itemize}
Thanks to the statement (a) above, it is sufficient to prove that none of the
$\chi_i$'s vanishes on $N$. From statement (b) we have the following inclusion
\[
\im\mu^\lamc \sub \bigcap_{j\in\cI} (C_j^+\cup C_j^-) = \bigcup_{\cJ \sub \cI}
\Pol_{\cJ},
\]
where $\Pol_{\cJ} = \left(\bigcap_{j\in \cJ} C_j^-\right) \cap
\left(\bigcap_{j\in \cJ^c} C_j^+\right)$ with $\cJ^c := \cI \backslash \cJ$
(so $\Pol_{\emptyset}=\Pol$).

Statement (a) implies that $\Pol\cap\im\mu^\lamc$ is not empty, but this image
is connected and for $\cJ$ nonempty, $\Pol_\cJ$ does not meet $\Pol$. Hence
$\im\mu^\lamc$ is contained in $\Pol$.  However, if $\chi_i(z)=0$ for some
$i\in\cI$ and $z\in N$, then $L_{ir}(\mu^\lamc(z))=0$ for all $r\in I_i$,
contradicting the combinatorial type of $\Pol$.
\end{proof}

\begin{cor} Any toric symplectic orbifold whose rational Delzant
polytope $(\Pol,\Ll)$ has the combinatorics of a product of simplexes admits a
compatible toric Kahler metric $h_\Ll$ which is a Levi--K\"ahler reduction of
a product of spheres.
\end{cor}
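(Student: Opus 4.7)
The plan is to show that this corollary is essentially a direct unpacking of Theorem~\ref{t:product-spheres} together with the rational Delzant--Lerman--Tolman classification. Suppose $(M,\omega,\T)$ is the toric symplectic orbifold associated to the rational Delzant polytope $(\Pol,\Ll)$, whose combinatorics agrees with that of a product of simplices $\Sig=\Sig_1\times\cdots\times\Sig_\ell$ with $\dim\Sig_i=m_i$. The first step is to fix a combinatorial identification of the facets of $\Pol$ with those of $\Sig$, which gives a canonical indexing $\cS=\{(i,r):i\in\cI,r\in I_i\}$ of facets, and thus surjective linear maps $\Ll\colon\ab\to\torh$ (with $\Ll(e_{ir})=L_{ir}$ the affine defining function of the $(i,r)$-th facet of $\Pol$) and $\ul\colon\ab\to\tor$ fitting into the diagram of Stratagem~\ref{str:setup}. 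This produces a pair $(\g,\lamc)$ with $\g=\ker\ul$ and $\lamc=\Ll\restr{\g}$.

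Next, I would take for $N$ the product of standard CR spheres $N=\Sph^{2m_1+1}\times\cdots\times\Sph^{2m_\ell+1}\sub\C_\cS$ as constructed in Section~\ref{s:lkr-sph} from the canonical data $(\g_o,\lamc_o)$. By construction, the polytope $\Pol_{\g,\lamc}$ arising from $\Ll$ equals the given polytope $\Pol$, and in particular has the combinatorics of $\Sig$. Thus Condition~\ref{cond:comb} is satisfied, and Theorem~\ref{t:product-spheres}, specifically the implication (iii)$\Rightarrow$(i), immediately concludes that $(\g,\lamc)$ is a positive Levi pair on $N$, i.e., defines a Levi--K\"ahler structure.

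To pass from the Levi structure to a quotient, I would invoke the rationality hypothesis on $(\Pol,\Ll)$: this says precisely that $\ul\colon\ab\to\tor$ maps $\Z_\cS$ into the weight lattice $\Lam\sub\tor$, so that $\g$ is the Lie algebra of a closed subtorus $G\sub\Ab=\ab/2\pi\Z_\cS$. The Levi--K\"ahler quotient $h_\Ll$ of $N$ by $(\g,\lamc)$, as in Definition~\ref{d:lkq}, is then a compact toric K\"ahler orbifold under $\Ab/G$. Finally, by Theorem~\ref{t:product-spheres}(ii), the momentum image of the induced horizontal momentum map is $\Pol$, with labelling $\Ll$ recording the stabilizers (compare Theorem~\ref{t:polytope}); the rational Delzant--Lerman--Tolman classification recalled in Section~\ref{s:toric-contact} then identifies the underlying toric symplectic orbifold of $(N/G,\d\eta^\lamc\restr{\Ds})$ equivariantly with $(M,\omega,\T)$, which is what was required.

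Since all the substantive work---establishing positivity of the Levi pair from the combinatorial condition---is contained in Theorem~\ref{t:product-spheres}, there is no real obstacle: the corollary is a packaging statement. The only mild subtlety to verify explicitly is the last step, namely that the Levi--K\"ahler quotient yields not just a toric K\"ahler orbifold of the correct combinatorial type but actually one equivariantly symplectomorphic to the prescribed $(M,\omega,\T)$; this is where the precise matching of labels $L_{ir}$ (not merely their combinatorial positions) enters, and where the rationality is used to promote a Lie algebra inclusion $\g\into\ab$ to a closed subgroup inclusion $G\into\Ab$ with the right stabilizer subgroups.
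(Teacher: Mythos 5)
Your proposal is correct and follows essentially the same route the paper intends: Theorem~\ref{t:product-spheres} (the implication (iii)$\Rightarrow$(i)) supplies the positive Levi pair from the combinatorial hypothesis, rationality of $(\Pol,\Ll)$ gives the closed subtorus $G\sub\Ab$, and the identification of the quotient with the prescribed orbifold is exactly the Delzant--Lerman--Tolman matching of labelled momentum polytopes already set out in \S\ref{s:toric-contact}. No gaps.
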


We now give a closed formula for the symplectic potential of $h_\Ll$.

\begin{thm}\label{t:symplectic-potential} Let $N=\Sph^{2m_1+1}\times\cdots
\times\Sph^{2m_{\ell}+1}\sub\C_\cS$ be a product of standard CR spheres, and
suppose that the kernel $\g$ of $\ul=\d\circ \Ll$ satisfies
Condition~\textup{\ref{cond:trans}}.  Then
\begin{align*}
G_\Ll=\frac12\sum_{i\in\cI}\sum_{r\in I_i\cup\smash{\{\infty\}}} L_{ir} \log|L_{ir}|
=\frac12\sum_{i\in\cI}\sum_{r\in I_i}L_{ir}\log\,\Bigl|\frac{L_{ir}}{L_{i\infty}}\Bigr|
\end{align*}
is a symplectic potential for the Levi--K\"ahler metric, where
$L_{ir}\in\torh$ is viewed as a linear function on $\torh^*$, hence an affine
function on $\As\sub\torh^*$, and $L_{i\infty}=-\sum_{r\in I_i} L_{ir}$.
Equivalently, the reduced metric on the image of the horizontal momentum map
$\mu^\lamc$ is given by
\begin{equation*}
h_\Ll\red= \frac12 \sum_{i\in\cI}\sum_{r\in I_i\cup\smash{\{\infty\}}}
\frac{{\d L_{ir}}^2}{L_{ir}} =\frac12 \sum_{i\in\cI} \sum_{0\leq r<s\leq m_i}
\frac{L_{ir} L_{is}}{\sum_{t=0}^{m_i} L_{it}}\biggl(\frac{\d L_{ir}}{L_{ir}}
- \frac{\d L_{is}}{L_{is}}\biggr)^2.
\end{equation*}
\end{thm}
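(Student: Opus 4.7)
\emph{Plan.} My approach is to apply Theorem~\ref{t:sph-red-metric} to compute the reduced metric $h_\Ll\red$ explicitly and then match it with $\Hess(G_\Ll)$. First, I note that $N=\prod_{i\in\cI}\Sph^{2m_i+1}$ is a spherical quadric in the strong sense that $\g_o$ contains $\sum_{r\in I_i}e_{ir}$ for each $i\in\cI$, whence $\sum_{r\in I_i}u^o_{ir}=0$. Summing equation~\eqref{eq:PS-momentum} over $r\in I_i$ and using $\sum_{r\in I_i}\sigma_{ir}=1$ on $N$ yields
\[
2\chi_i=\sum_{r\in I_i}L_{ir}(\mu^\lamc)=-L_{i\infty}(\mu^\lamc),
\]
so each $\chi_i$ is the pullback under $\mu^\lamc$ of the affine function $-\tfrac12 L_{i\infty}$ on $\As$.

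By Theorem~\ref{t:sph-red-metric}, $h_\Ll\red$ is the pullback by $\Ll\transp\colon\Pol_{\g,\lamc}\to\ab^o$ of the restriction of $h_o=\sum_s\d\zeta_s^2/(2\zeta_s)$ to $(\ul^o)\transp(\tor^*)\sub T\ab^o$. At a point $\xi\in\Pol_{\g,\lamc}\ins$, the $h_o$-orthogonal decomposition of $\ab^*$ reads $\ab^*=\gM^o_\xi\oplus(\ul^o)\transp(\tor^*)$, where $\gM^o_\xi=\{(L_{ir}(\xi)c_i)_{(i,r)}\st c\in\R^\ell\}$ and $(\ul^o)\transp(\tor^*)=\{\alpha\in\ab^*\st \sum_{r\in I_i}\alpha_{ir}=0\ \forall\,i\}$. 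For $X\in T_\xi\As\cong\tor^*$, the vector $\Ll\transp(X)\in\ab^*$ has components $\d L_{ir}(X)$; using $\sum_{r\in I_i}\d L_{ir}=-\d L_{i\infty}$ and $\sum_{r\in I_i}L_{ir}(\xi)=-L_{i\infty}(\xi)$, its projection onto $(\ul^o)\transp(\tor^*)$ has components
\[
\tilde\alpha_{ir}=\d L_{ir}(X)-L_{ir}(\xi)\,\frac{\d L_{i\infty}(X)}{L_{i\infty}(\xi)}.
\]

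Expanding $\sum_{r\in I_i}\tilde\alpha_{ir}^2/(2L_{ir}(\xi))$ and collecting the cross-terms (which convert the ``virtual'' $\infty$-contribution into a genuine summand) simplifies to $\tfrac12\sum_{r\in I_i\cup\{\infty\}}\d L_{ir}(X)^2/L_{ir}(\xi)$, giving
\[
h_\Ll\red=\tfrac12\sum_{i\in\cI}\sum_{r\in I_i\cup\{\infty\}}\frac{\d L_{ir}^2}{L_{ir}}.
\]
Since each $L_{ir}$ is affine on $\As$, a direct computation shows $\Hess(L_{ir}\log|L_{ir}|)=\d L_{ir}^2/L_{ir}$ (regardless of the sign of $L_{ir}$), so this agrees with $\Hess(G_\Ll)$, verifying that $G_\Ll$ is a symplectic potential. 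The equivalent product form for $h_\Ll\red$ then follows, one factor $i$ at a time, from the algebraic identity
\[
\sum_r\frac{a_r^2}{A_r}-\frac{(\sum_r a_r)^2}{\sum_r A_r}=\sum_{r<s}\frac{A_rA_s}{\sum_t A_t}\Bigl(\frac{a_r}{A_r}-\frac{a_s}{A_s}\Bigr)^2,
\]
applied with $a_r=\d L_{ir}(X)$, $A_r=L_{ir}(\xi)$. The only slightly delicate point is arranging the orthogonal projection cleanly so that $L_{i\infty}$ emerges on equal footing with the facet-defining functions; the remainder is routine bookkeeping.
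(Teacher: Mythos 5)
Your proof is correct and follows essentially the same route as the paper: both rest on Theorem~\ref{t:sph-red-metric} and the $h_o$-orthogonal splitting $\ab^*=\gM^o\oplus(\ul^o)\transp(\tor^*)$, the only difference being that you project each vector $\Ll\transp(X)$ explicitly and expand, whereas the paper records the same projection as a quadratic-form decomposition of $h_o$. (The paper also gives a second, independent verification via the horizontal metric formula~\eqref{eq:Ds-metric}, but that is a cross-check rather than a different argument.)
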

\begin{proof} The hessian of the stated potential $G_\Ll$ evaluates readily to
the stated reduced metric, which we can compute in two ways. Using
Theorem~\ref{t:sph-red-metric}, we decompose
\begin{equation*}
h_o:= \sum_{s\in\cS}\frac{{\d\zeta_s}^2}{2\zeta_s} =\frac12 \sum_{i\in\cI}
\biggl(\sum_{r\in I_i} \frac{{\d \zeta_{ir}}^2}{\zeta_{ir}} -
\frac{\bigl(\sum_{r\in I_i} \d\zeta_{ir}\bigr){}^2}{\sum_{r\in
    I_i}\zeta_{ir}}\biggr) +\frac12\sum_{i\in\cI}\frac{\bigl(\sum_{r\in I_i}
  \d\zeta_{ir}\bigr){}^2} {\sum_{r\in I_i}\zeta_{ir}}
\end{equation*}
into components orthogonal and parallel to $\gM^o$, and the reduced metric is
the pullback of the first term by $\Ll$.  Alternatively,
by~\eqref{eq:Ds-metric}, the horizontal metric on $J(\Rb{}\cap\Ds)$ is
\begin{equation*}\begin{split}
\sum_{i\in\cI} 2\chi_i \sum_{r\in I_i}\frac{{\d \sigma_{ir}}^2}{2\sigma_{ir}}\Restr{\Ds}
&=\mm^*\biggl[\frac12\sum_{i\in\cI} \rho_i
\biggl(\sum_{r\in I_i} \frac{{\d \zeta_{ir}}^2}{\zeta_{ir}}
-\frac{\bigl(\sum_{r\in I_i} \d \zeta_{ir}\bigr){}^2} {\sum_{r\in I_i}
  \zeta_{ir}}\biggr)\biggr]\Restr{\Ds}\\
&= \frac12 \sum_{i\in\cI} \mm^*\biggl(\sum_{r\in I_i} \frac{\d
    (\rho_i\zeta_{ir}){}^2}{\rho_i\zeta_{ir}} -\frac{\bigl(\sum_{r\in I_i} \d
  (\rho_i\zeta_{ir})\bigr){}^2}{\sum_{r\in I_i} \rho_i\zeta_{ir}}\biggr)\Restr{\Ds}
\end{split}\end{equation*}
where $\mm^*\rho_i=2\chi_i$ and we use that $\sum_{r\in I_i}\sigma_{ir}$ is
constant on $N$, so that $\mm^*\bigl(\sum_{r\in I_i}\d\zeta_i\bigr)=0$, and
then exploit rescaling invariance. The result is the pullback by $\mu^\lamc$
of the stated reduced metric, since $L_{ir}(\mu^\lamc) = 2\chi_i\sigma_{ir}$
by~\eqref{eq:PS-momentum}.
\end{proof}
As shown by Guillemin~\cite{Guillemin}, a K\"ahler potential may be computed
as a Legendre transform of the symplectic potential $G_\Ll$ with respect to
some basepoint $p\in\cA$:
\begin{align*}
H_\Ll&=\ip{\mu^\lamc-\mu^\lamc(p),\d G_\Ll}-G_\Ll\\
&= \sum_{i\in\cI} \sum_{r\in I_i\cup\smash{\{\infty\}}}
\Bigl(\tfrac12\bigl(\ip{\mu^\lamc-\mu^\lamc(p),\d L_{ir}}-L_{ir}\bigr)\log |L_{ir}|
+\tfrac12 \ip{\mu^\lamc-\mu^\lamc(p),\d L_{ir}}\Bigr)\\
&=\sum_{i\in\cI}\sum_{r\in I_i\cup\smash{\{\infty\}}} \tfrac12 L_{ir}(p) \log |L_{ir}|,
\qquad\text{since}\quad
\sum_{r\in I_i\cup\smash{\{\infty\}}} L_{ir}=0.
\end{align*}

\subsection{Products of 3-spheres}\label{s:S3s}

As a special case of Theorem~\ref{t:product-spheres}, consider an $\ell$-fold
product $N=\Sph^3\times\cdots\times \Sph^3$ of $3$-spheres as a codimension
$\ell$ submanifold of $\C^{2\ell}\cong \C^\ell\otimes\C^2$ with momentum
coordinates $\sigma_{ir}(z)$ ($i\in\cI=\{1,\ldots \ell\}$, $r\in\{0,1\}$).
Thus $\ab=\R^{2\ell}$ is the Lie algebra of $\Ab=\T^{2\ell}$ acting diagonally
on $\C^{2\ell}$. We study the Levi--K\"ahler metric on the open subset where
the quotient torus $\T$ acts freely, ignoring rationality conditions.

\subsubsection{Geometry of the Levi--K\"ahler metric} Consider, for any
$\ell$-dimensional subspace $\g\sub\ab=\R^{2\ell}$, the integrable
distribution $\Rb\g= \spns\{K_v\st v\in \g\}$ on $N$.  Then, around each
point of $N$ such that $\Rb\g$ is transversal to $\Ds$, the local quotient
space $M$ of leaves of $\Rb\g$ has induced complex structure $J$. We will
further assume that $\lamc\in\g^*$ is such that $\d\eta^\lamc$ induces a
K\"ahler metric $(h_\Ll, J, \omega_\Ll)$ on $M$.

The reduced metric of Theorem~\ref{t:symplectic-potential} specializes to
\begin{equation*}
h_\Ll\red=\frac12 \sum_{i\in\cI} \frac{L_{i0} L_{i1}}{L_{i0}+L_{i1}}
\biggl(\frac{\d L_{i0}}{L_{i0}} - \frac{\d L_{i1}}{L_{i1}}\biggr)^2,
\quad\text{i.e.,}\quad (\mu^\lamc)^*h_\Ll\red
=\sum_{i\in\cI} \frac{\chi_i\,{\d \sigma_i}^2}{\sigma_i(1-\sigma_i)},
\end{equation*}
where $L_{ir}(\mu^\lamc)=2\chi_i\sigma_{ir}$,
$2\chi_i=-L_{i\infty}(\mu^\lamc)$,
$\sigma_i:=\sigma_{i0}=-L_{i0}(\mu^\lamc)/L_{i\infty}(\mu^\lamc)$,
$L_{i0}+L_{i1}= -L_{i\infty}$ and hence $\sigma_{i1}=1-\sigma_i$ on $N$.  In
other words, the characteristic functions $\chi_i$ and the orthogonal
coordinates $\sigma_i$ are affine and birational functions (respectively) of
the momentum coordinates $\mu^\lamc$.  Thus the momentum images of the
coordinate hypersurfaces with $\sigma_i$ constant are hyperplanes in $\As$
through the codimension two affine subspace where
$L_{i0}(\mu^\lamc)=L_{i1}(\mu^\lamc)=0$, and $\sigma_i$ is inverse to the
unique affine coordinate on this pencil of hyperplanes sending $0$, $1$ and
$\infty$ to the facets $L_{i0}(\mu^\lamc)=0$, $L_{i1}(\mu^\lamc)=0$ (of
$\Pol_{\g,\lamc}$) and the ``characteristic hyperplane'' $\chi_i
=-\frac12L_{i\infty}(\mu^\lamc)=0$ (respectively). These pencils introduce a
factorization structure (in the sense of \cite{ACG2}) which is adapted to the
class of polytopes in $\As$ with the combinatorics of the product of
intervals.

To allow for more general coordinates, set $\cH=\{0,1,\infty\}$ (so that
$\sum_{r\in\cH} L_{ir}=0$) and introduce an arbitrary affine coordinate
$\xi_i=-N_{i0}(\mu^\lamc)/N_{i\infty}(\mu^\lamc)$ on the pencil which takes
the values $\alpha_{ir}$ at the points $[L_{ir}]$, meaning
\begin{equation*}
\sigma_i=\frac{(\xi_i-\alpha_{i0})(\alpha_{i1}-\alpha_{i\infty})}
{(\xi_i-\alpha_{i\infty})(\alpha_{i1}-\alpha_{i0})},\quad
1-\sigma_i=\frac{(\xi_i-\alpha_{i1})(\alpha_{i0}-\alpha_{i\infty})}
{(\xi_i-\alpha_{i\infty})(\alpha_{i0}-\alpha_{i1})},\quad
L_{ir}= \frac{2(\xi_i-\alpha_{ir})N_{i\infty}}{A_i'(\alpha_{ir})}
\end{equation*}
for affine functions $N_{ir}$ of $\mu^\lamc$ with $\sum_{r\in\cH} N_{ir}=0$,
and $A_i(y)=a_i\prod_{r\in\cH}(y-\alpha_{ir})$. Note that we also allow
$\alpha_{i\infty}=\infty$, in which case $A_i$ is of degree $2$. (This latter
case can be derived from the generic case $\deg A_i=3$ by letting
$\alpha_{i\infty} = 1/\varepsilon$ and taking a limit of $\varepsilon A_i$ as
$\varepsilon \to 0$.) Thus
\begin{equation*}
\d \sigma_i=\frac{(\alpha_{i1}-\alpha_{i\infty})(\alpha_{i0}-\alpha_{i\infty})}
{(\xi_i-\alpha_{i\infty})(\xi_i-\alpha_{i\infty})^2},\quad
\frac{\chi_i\,{\d\sigma_i}^2}{\sigma_i(1-\sigma_i)}
=\frac{L_{i\infty}(\mu^\lamc)\,A_i'(\alpha_{i\infty}){\d \xi_i}^2}
{2(\xi_i-\alpha_{i\infty})A_i(\xi_i)}
=N_{i\infty}(\mu^\lamc)\,\frac{{\d \xi_i}^2}{A_i(\xi_i)}.
\end{equation*}
On the other hand,
\begin{align*}
\d\xi_i&=\frac{N_{i0}(\mu^\lamc) \d N_{i\infty} -N_{i\infty}(\mu^\lamc) \d N_{i0}}
{N_{i\infty}(\mu^\lamc)^2}\circ\d\mu^\lamc
=-\frac{(\xi_i \d N_{i\infty}+ \d N_{i0})\circ\d\mu^\lamc}
{N_{i\infty}(\mu^\lamc)}\\
\therefore\quad\d\mu^\lamc&=-\sum_{j\in\cI}N_{j\infty}(\mu^\lamc)\d\xi_j\otimes\Qt_j,
\quad\text{where} \quad \Qt_i\colon M\to\tor^* \quad\text{satisfy}\\
\sum_{i\in\cI} &(\xi_i \d N_{i\infty}+ \d N_{i0})\otimes \Qt_i=\Id_\tor\quad
\text{and}\quad \ip{(\xi_i \d N_{i\infty}+ \d N_{i0}),\Qt_j}=\delta_{ij}.
\end{align*}
Hence $\d\Qt_i = -\sum_j \ip{\Qt_i,\d N_{j\infty}}\d\xi_j\otimes\Qt_j$ and the
Levi--K\"ahler metric is
\begin{equation}\label{eq:gen-s3s}
\begin{split}
h_\Ll&=\sum_{i\in\cI}N_{i\infty}(\mu^\lamc)
\biggl(\frac{{\d \xi_i}^2}{A_i(\xi_i)}+A_i(\xi_i){\theta_i}^2\biggr),\\
\omega_\Ll&=\ip{\d\mu^\lamc\wedge \d\bt}=-\sum_{i\in\cI} N_{i\infty}(\mu^\lamc)
  \d\xi_i\wedge\theta_i,
\end{split}
\end{equation}
with $\theta_i=\ip{\Qt_i,\d\bt}$ for angular coordinates $\bt$ on $M$ such
that $\d\bt^{(1,0)}$ is holomorphic:
\begin{equation*}
\d\d^c\bt =\d\biggl(\sum_{i\in\cI}\frac{\xi_i\d N_{i\infty}+\d N_{i0}}{A_i(\xi_i)}
\d\xi_i\biggr)=0.
\end{equation*}
The canonical affine coordinates may be obtained by setting
$A_i(t)=2t(1-t)(1-\eps t)$ in the limit $\eps\to 0$; we then have
$N_{ir}=-L_{ir}$ and $\xi_i=\sigma_i$ so that $N_{i\infty}(\mu^\lamc)=2\chi_i$
and
\begin{equation}\label{eq:can-s3s}
\begin{split}
h_\Ll&=\sum_{i\in\cI}2\chi_i\,
\biggl(\frac{{\d \sigma_i}^2}{2\sigma_i(1-\sigma_i)}+2\sigma_i(1-\sigma_i)
{\theta_i}^2\biggr),\\
\omega_\Ll&=-\sum_{i\in\cI} 2\chi_i\,\d\sigma_i\wedge\theta_i.
\end{split}
\end{equation}

\subsubsection{Explicit normal form} Recall that $\g_o\sub\ab$ has a canonical
basis $e_i:=e_{i0}+e_{i1}$ for $i\in\cI$.  We obtain a similar basis
$w_i:i\in\cI$ for $\g$ by observing that the vectors $e_{i0}$ in $\ab$ project
to $\tor=\ab/\g_o\cong \ab/\g$ to the normals $u_{i0}:i\in\cI$ at a vertex of
$\Pol_{\g,\lamc}$. Hence $\g$ is transversal to $\spns\{e_{i0}\st i\in\cI\}$
and there are canonical $w_i\in\g$ of the form
\begin{equation}\label{eq:basis}
w_i = e_i + \sum_{j\in\cI} C_{ji} e_{j0} = e_{i0}+e_{i1}+
\sum_{j\in\cI} C_{ji} e_{j0}
\end{equation}
for an $\ell\times\ell$ matrix of real numbers $C$. We denote by
$K_i^o=K_{e_i}$ and $K_i=K_{w_i}$ the induced vector fields in $\Rb{\g_o}$ and
$\Rb{\g}$ respectively. We shall also write $K_{ir}$ as a shorthand for
$K_{e_{ir}}=\partial/\partial\ang_{ir}$. Thus
\begin{align*}
\d^c\mm^i(K_j^o)&=2(\sigma_{i0}\d\ang_{i0}+\sigma_{i1}\d\ang_{i1})(K_{j0}+K_{j1})
=2\delta_{ij}\\
\d^c\mm^i(K_j)&=2(\sigma_{i0}\d\ang_{i0}+\sigma_{i1}\d\ang_{i1})\bigl(K_j^o
+{\textstyle\sum}_k C_{kj} K_{k0}\bigr)=2(\delta_{ij} + \sigma_i C_{ij}).
\end{align*}
on $N$, since $\sigma_{i0}+\sigma_{i1}=1$. Now if $\eta_i:i\in\cI$ are the
$1$-forms on $N$ defined by $\eta_i(K_j)= \delta_{ij}$ and
$\bigcap_{i\in\cI}\ker\eta_i=\Ds= \bigcap_{i\in\cI}\ker\d^c\mm^i$, we may
write
\begin{equation}
\tfrac12 \d^c\mm^i = \sum_{j\in\cI} P_{ij} \eta_j
\quad\text{where}\quad
P_{ij}:= \tfrac12 \d^c\mm^i(K_j)=\delta_{ij} + \sigma_i C_{ij}.
\end{equation}
We have noted that $e_{i0}:i\in\cI$ project onto a bases for
$\tor=\ab/\g_o\cong \ab/\g$. In order to compute the toral part of the
quotient metric, we need to project the corresponding vector fields $K_{i0}$
onto $\Ds$. We thus define projections
\begin{align*}
X_i^o&= K_{i0}-\sum_{j\in\cI} \tfrac12\d^c\mm^j(K_{i0}) K_j^o=
K_{i0} - \sigma_i (K_{i0}+K_{i1}) = (1-\sigma_i)K_{i0}-\sigma_i K_{i1},\\
X_i&= K_{i0}- \sum_{j\in\cI} \eta_j(K_{i0}) K_j,
\end{align*}
along $\Rb{\g_o}$ and $\Rb{\g}$ respectively. Let $I_\sigma=
\diag(\sigma_1,\ldots\sigma_\ell)$.
\begin{lemma} $X_i^o=\sum_{j\in\cI} X_j \tilde P_{ji}$ with
$\tilde P_{ji}=\delta_{ji}+C_{ji}\sigma_i$, i.e., $PI_\sigma= I_\sigma\tilde P$.
\end{lemma}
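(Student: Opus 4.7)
First I would dispose of the matrix identity $PI_\sigma=I_\sigma\tilde P$, which is an elementary pointwise calculation: both $(PI_\sigma)_{ij}=(\delta_{ij}+\sigma_iC_{ij})\sigma_j$ and $(I_\sigma\tilde P)_{ij}=\sigma_i(\delta_{ij}+C_{ij}\sigma_j)$ simplify to $\delta_{ij}\sigma_i+\sigma_i\sigma_jC_{ij}$, since $\delta_{ij}\sigma_j=\delta_{ij}\sigma_i$. The substance of the lemma is the vector-field identity $X_i^o=\sum_{j\in\cI}X_j\tilde P_{ji}$, and my plan is to prove it by direct expansion; the matrix identity will then appear exactly when reading off the $K_k$-coefficient of the result, explaining the ``i.e.''.

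The main preliminary step is to compute the ``$\Rb{\g}$-coefficients'' $E_{kj}:=\eta_k(K_{j0})$ of the generators $K_{j0}$. I would combine the dual-basis relation $\tfrac12\d^c\mm^i=\sum_k P_{ik}\eta_k$ with the direct evaluation
\[
\tfrac12\d^c\mm^i(K_{j0})=\sigma_{i0}\d\ang_{i0}(K_{j0})+\sigma_{i1}\d\ang_{i1}(K_{j0})=\sigma_i\delta_{ij},
\]
which follows from $\d^c\sigma_{ir}=2\sigma_{ir}\d\ang_{ir}$ and $\d\ang_{ir}(K_{j0})=\delta_{ij}\delta_{r,0}$. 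This yields $PE=I_\sigma$, hence $E=P^{-1}I_\sigma$.

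With this in hand, I would unfold the right-hand side as
\[
\sum_{j\in\cI}X_j\tilde P_{ji}=\sum_j\tilde P_{ji}K_{j0}-\sum_{k\in\cI}(E\tilde P)_{ki}K_k.
\]
By the definition of $\tilde P$, the first sum equals $K_{i0}+\sigma_i\sum_jC_{ji}K_{j0}$. For the second, the matrix identity collapses everything: $E\tilde P=P^{-1}I_\sigma\tilde P=P^{-1}(PI_\sigma)=I_\sigma$, so the coefficient of $K_k$ reduces to $\sigma_i\delta_{ki}$, and the second sum contributes $-\sigma_iK_i$. Substituting $K_i=K_{i0}+K_{i1}+\sum_jC_{ji}K_{j0}$ cancels the $\sigma_i\sum_jC_{ji}K_{j0}$ terms, leaving exactly $(1-\sigma_i)K_{i0}-\sigma_iK_{i1}=X_i^o$, as required.

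There is no genuine obstacle here: the only step that requires care is the computation $E=P^{-1}I_\sigma$, after which the verification is purely algebraic. Reading off the $K_k$-coefficient of the vector-field identity is precisely what recovers the matrix identity $PI_\sigma=I_\sigma\tilde P$, justifying the equivalence asserted by the ``i.e.'' in the statement.
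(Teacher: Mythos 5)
Your proof is correct and follows essentially the same route as the paper's: both rest on the evaluation $\tfrac12\d^c\mm^j(K_{i0})=\sigma_i\delta_{ij}$ (equivalently $P\,E=I_\sigma$ for $E_{kj}=\eta_k(K_{j0})$) together with the elementary identity $PI_\sigma=I_\sigma\tilde P$, followed by direct expansion. The only difference is cosmetic: you expand $\sum_j X_j\tilde P_{ji}$ and reduce it to $X_i^o$, while the paper expands $X_i^o$ (via $K_j^o=K_j-\sum_k C_{kj}K_{k0}$) and regroups it into $\sum_k X_k\tilde P_{ki}$.
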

\begin{proof} Since $\sum_{k\in\cI} P_{jk}\eta_k(K_{i0})=\frac12\d^c\mm^j(K_{i0})
=\sigma_i\delta_{ij}$, it follows from the relation between $P$ and $\tilde P$
that $\frac12\d^c\mm^j(K_{i0})=\sum_{k\in\cI} \eta_j(K_{k0})\tilde
P_{ki}$. We now have
\begin{align*}
X_i^o&=K_{i0}-\sum_{j\in\cI} \tfrac12 \d^c\mm^j(K_{i0})\Bigl(K_j -
\sum_{k\in\cI} C_{kj} K_{k0}\Bigr)\\
&=K_{i0}-\sum_{j,k\in\cI} \Bigl(\eta_j(K_{k0})\tilde P_{ki} K_j
-\sigma_j\delta_{ij} C_{kj} K_{k0}\Bigr)\\
&=\sum_{k\in\cI} K_{k0}(\delta_{ki}+C_{ki}\sigma_i)
-\sum_{j,k\in\cI} \eta_j(K_{k0})K_j \tilde P_{ki} = \sum_{k\in\cI} X_k \tilde P_{ki}
\end{align*}
as required.
\end{proof}
Writing $\lamc=(c_1,\ldots c_\ell)$ with respect to the dual basis to
$w_i:i\in\cI$, $\eta^\lamc=\sum_{i\in\cI} c_i \eta_i$. The corresponding
momentum coordinates $\mu_i:i\in\cI$ are given by (cf.~\eqref{eq:M-momentum})
\begin{equation}\label{eq:mu-sigma}
\mu_i = \ip{\mu^\lamc,L_{i0}} = \sum_{j\in\cI} c_j \eta_j(K_{i0})
=\sum_{j\in\cI} c_j \sigma_j\tilde Q_{ji}=\sum_{j\in\cI} c_j Q_{ji}\sigma_i
=2\chi_i\sigma_i,
\end{equation}
where $\tilde Q=\tilde P^{-1}$, $Q=P^{-1}$ and $2\chi_i=\sum_{j\in\cI}c_jQ_{ji}$.
If we rewrite~\eqref{eq:mu-sigma} as
\begin{equation*}
c_j\sigma_j=\sum_{i\in\cI}\mu_i\tilde P_{ij}=\mu_j+\sum_{i\in\cI}\mu_iC_{ij}\sigma_j,
\end{equation*}
then we can specialize~\eqref{eq:gen-s3s} with $N_{jr}=L_{jr}$ (up to overall
scale) using
\begin{equation*}
L_{j0}(\mu^\lamc)=\mu_j,\qquad L_{j\infty}(\mu^\lamc)=-(L_{j0}+L_{j1})(\mu^\lamc)
=\sum_{i\in\cI}\mu_iC_{ij}- c_j.
\end{equation*}
We may also compute directly that the toral part of the metric on $\Ds$ is
\begin{equation}\label{eq:S3s-metric}
\begin{split}
h_\Ll\fib:=&\sum_{i,j,k\in\cI} c_i \d\eta_i(X_j,JX_k) \,\d t_j \,\d t_k
=\sum_{i,j,k,p\in\cI} \tfrac12 c_i Q_{ip}\,\d\d^c\mm^p(X_j,JX_k) \,\d t_j \,\d t_k\\
=&\sum_{i,j,k,p,q,r\in\cI} 2c_iQ_{ip}\,
\bigl(\sigma_p{\d\ang_{p0}}^2+(1-\sigma_p){\d\ang_{p1}}^2\bigr)
(X^o_q \tilde Q_{qj},X^o_r \tilde Q_{rk})\,\d t_j\, \d t_k\\
=&\sum_{i\in\cI} 4\chi_i\sigma_i(1-\sigma_i)
\biggl(\sum_{j\in\cI}\tilde Q_{ij} \,\d t_j\biggr)^2,
\end{split}
\end{equation}
where we use $\sigma_i(1-\sigma_i)^2+(1-\sigma_i)\sigma_i^2
=\sigma_i(1-\sigma_i)$. This agrees with~\eqref{eq:gen-s3s} since
\begin{align*}
\d\mu_i&=\sum_j c_j (\d\sigma_j\, \tilde Q_{ji}+\sigma_j\,\d\tilde Q_{ji})=
\sum_{j,k,l}c_j(Q_{jk}P_{kl}-\sigma_j\tilde Q_{jk}C_{kl})\,\d\sigma_l\,\tilde Q_{li}\\
&=\sum_{j,k,l} c_j Q_{jk}(P_{kl}-\sigma_k C_{kl})\,\d\sigma_l\,\tilde Q_{li}=
\sum_k 2\chi_k\, \d\sigma_k\, \tilde Q_{ki}.
\end{align*}

\subsection{Projective cubes}\label{s:pcubes} For Levi--K\"ahler quotients of
a $\ell$-fold product of $3$-spheres, the polytope $\Pol_{\g,\lamc}$ is an
$\ell$-cuboid, i.e., it has the combinatorics of a product of $m=\ell$
intervals (an $m$-cube).  Such a polytope is \emph{projectively equivalent to
  a cube} if the intersections of pairs of opposite facets lie in a
hyperplane: transforming this hyperplane to infinity, opposite facets become
parallel and meet the hyperplane at infinity in the facets of an
$(m-1)$-simplex, and all simplices are projectively equivalent. Projective
equivalence to an $m$-cube is automatic when $m=2$, but is restrictive for
$m\geq 3$ (when $m=3$, opposite facets of a generic cuboid meet in skew lines,
not coplanar lines).

The assumption of projective equivalence to an $m$-cube simplifies the
previous analysis, because we may take $N_{i\infty}$ to be the equation of the
hyperplane common to the pencils spanned by opposite facets, independent of
$i\in\cI=\{1,\ldots m\}$. Concretely, let $b_j\in\R$ for $0\leq j\leq m$ and
$\mu_j:0\leq j\leq m$ be affine coordinates on the affine space
$\As=\{(\mu_0,\mu_1,\ldots \mu_m):\sum_{j=0}^m b_j \mu_j =1\}$.

We now set $N_{i\infty}=\mu_0$, $N_{i0}=-\mu_i$ and $N_{i1}=\mu_i-\mu_0$ for
$1\leq i\neq m$. We thus have
\begin{equation}\label{xi}
\xi_i= \frac{\mu_i}{\mu_0} \qquad\text{and hence}\qquad
b_0+\sum_{i=1}^n b_i \xi_i = \sum_{i=0}^m \frac{b_i\mu_i}{\mu_0}
= \frac1 {\mu_0},
\end{equation}
so that the inverse transformation is
\begin{equation}\label{mu}
\mu_0 = \frac{1}{b_0+b_1\xi_1 + \cdots + b_m\xi_m},  \quad 
\mu_i = \xi_i\mu_0 = \frac{\xi_i}{b_0+b_1\xi_1 + \cdots + b_m\xi_m}.
\end{equation}
Differentiating $\mu_i$, we then have for the symplectic form 
\begin{equation}\label{omega}
\omega_{\Ll} = \sum_{i=1}^m \d\mu_i \wedge \d t_i
= \mu_0 \sum_{i=1}^m \d\xi_i \wedge \theta_i,
\end{equation}
where
\begin{equation}\label{theta}
\theta_i = \d t_i - \mu_0 b_i \sum_{j=1}^m \xi_j \d t_j
= \d t_i - b_i \sum_{j=1}^m \mu_j \d t_j.
\end{equation}
In particular $\d \theta_i = - b_i \omega_\Ll$. Letting
\begin{equation}\label{J}
J\d\xi_i := A_i(\xi_i)\theta_i, \quad 1\leq i\leq m,
\end{equation}
$J$ defines an integrable almost complex structure and the $t_i$ are
pluriharmonic.  We thus have the following diagonal form of $(h_\Ll,
\omega_\Ll)$
\begin{equation}\label{metric}
\begin{split}
h_\Ll &= \frac{1}{b_0+b_1\xi_1 + \cdots + b_m\xi_m}
\sum_{i=1}^m\Bigl(\frac{{\d\xi_i}^2}{A_i(\xi_i)} + A_i(\xi_i){\theta_i}^2\Bigr)\\
\omega_{\Ll} &= \frac1{b_0+b_1\xi_1 + \cdots +b_m \xi_m} \sum_{i=1}^m
\d\xi_i \wedge \theta_i,
\end{split}
\end{equation}
where the $1$-forms $\theta_i$ are given by \eqref{theta}.

The product of intervals $\xi_i\in [\al_{i0},\al_{i1}]$, $1\leq i\leq m$
transforms to the compact convex polytope $\Pol$ determined by the hyperplanes
$(\xi_i - \al_{ir})\mu_0=0$, (for $r\in\{0,1\}$, $1\leq i\leq n$). As before,
we set
\begin{equation*}
A_i(y):= a_i\prod_{r\in\cH}(y-\al_{ir}), \qquad L_{ir}(\mu) :=
\frac{2(\xi_i - \al_{ir})\mu_0}{A_i'(\al_{ir})}
=\frac{2(\mu_i-\alpha_{ir}\mu_0)}{A_i'(\al_{ir})},
\end{equation*}
where $r\in\cH=\{0,1,\infty\}$. Note that $L_{ir}\geq 0$ on $\Pol$ for
$1\leq i\leq m$ and $r\in\{0,1\}$, and that $\sum_{r\in\cH} L_{ir}=0$. We can
compute a K\"ahler potential from the symplectic potential $G_\Ll$ by Legendre
transform based at $\mu_j=0$ to get
\begin{equation}\label{kahler-potential}
H_{\Ll} = \sum_{i,r} \frac{\log |L_{ir}|}{A_i'(\al_{ir})}
= \sum_{i,r} \frac{\log |\xi_i - \al_{ir}|}{A_i'(\al_{ir})} + \ \mathrm{const}
= \sum_{i=1}^m \int^{\xi_i} \frac{\d s}{A_i(s)}.
\end{equation}

\subsection{Levi--K\"ahler metrics of convex quadrilaterals}\label{s:quad}

We now specialize to the case $m=2$, i.e.,
\begin{equation*}
N=\Sph^3\times \Sph^3= \{z\in \C^4\cong \C^2\otimes \C^2 \st
(\sigma_{10}+\sigma_{11})(z) =1, (\sigma_{20}+\sigma_{21})(z) =1\}.
\end{equation*}
By Theorem~\ref{t:product-spheres}, the compact K\"ahler $4$-orbifolds $(M,
h_{\Ll}, \omega_{\Ll})$ obtained as a Levi--K\"ahler quotient of $\Sph^3\times
\Sph^3$ by an abelian subgroup $G\sub\T^4$ are the compact toric $4$-orbifolds
whose rational Delzant polytope is a quadrilateral.  Note that from its very
construction, $h_\Ll$ is compatible with a second complex structure $\tilde J$
on $M$, coming from the quotient of the product CR structure $(\Ds,J_1-J_2)$
on $N=\Sph^3\times \Sph^3$, where $(\Ds,J) =(\Ds_1,J_1) \dsum (\Ds_2,J_2)$ is
the direct sum of the CR distributions of each $\Sph^3$ factor. Thus
$J=J_1+J_2$, and $\tilde J=J_1-J_2$ defines a second CR structure on $N$,
associated to the same distribution $\Ds\sub TN$, which commutes with $J$ and
induces the opposite orientation on $\Ds$.  In the terminology of \cite{ACG1},
$h_\Ll$ is K\"ahler with respect to $J$ and {\it ambihermitian} with respect
to commuting complex structures $(J,\tilde J)$ on $M$.  We are going to show
that $(h_\Ll, \tilde J)$ is, in fact, conformal to another $\T$-invariant
K\"ahler metric $({\tilde h}_\Ll,\tilde J)$ (which induces the opposite
orientation of $(M,J)$), i.e., that $(h_\Ll, J)$ and $({\tilde h}_\Ll, \tilde
J)$ define an {\it ambitoric} structure on $M$ in the sense of
\cite{ACG1}. These structures have been extensively studied and classified,
both locally~\cite{ACG1} and globally~\cite{ACG2}.

As any quadrilateral is a projective cube, the general form of the
Levi--K\"ahler metric $h_{\Ll}$ is described by \eqref{metric} but we shall
also describe below how this form is derived from the choice of the subgroup
$G$.  Following the notation in \S\ref{s:S3s}, we
specialize~\eqref{eq:S3s-metric} to $\ell=2$, and set
\begin{equation*}
C=\begin{pmatrix} \al & \ga \\ \be & \de \end{pmatrix},
\end{equation*}
so that
\begin{equation*}
A=\begin{pmatrix} 1+\al\sigma_1& \ga\sigma_1 \\ \be\sigma_2& 1+\de\sigma_2 
\end{pmatrix} \quad\text{and}\quad
\tilde A=
\begin{pmatrix} 1+\al\sigma_1 & \ga\sigma_2 \\ \be\sigma_1 & 1+\de\sigma_2
\end{pmatrix};
\end{equation*}
hence
\begin{equation*}
B=\frac 1 Z
\begin{pmatrix} 1+\de\sigma_2& -\ga\sigma_1 \\ -\be\sigma_2& 1+\al\sigma_1 
\end{pmatrix} \quad\text{and}\quad
\tilde B= \frac 1 Z
\begin{pmatrix} 1+\de\sigma_2 & -\ga\sigma_2 \\ -\be\sigma_1 & 1+\al\sigma_1
\end{pmatrix},
\end{equation*}
where $Z=(1+\al\sigma_1)(1+\de\sigma_2)-\be\ga\sigma_1\sigma_2 =1 +
\al\sigma_1 + \de\sigma_2 + (\al\de-\be\ga)\sigma_1\sigma_2$. Then
\begin{equation}\label{eq:c-moment}
\mu_1 = \frac{c_1 \sigma_1(1+\de\sigma_2) - c_2 \be \sigma_1\sigma_2}Z,
\quad
\mu_2 = \frac{- c_1 \ga \sigma_1\sigma_2 + c_2 (1+\al\sigma_1)\sigma_2}Z,
\end{equation}
while the toral part of the metric on $\Ds$ is
\begin{multline}\label{eq:H}
h_\Ll\fib=
\sigma_1(1-\sigma_1)\bigl((1+\de\sigma_2)c_1 - \be\sigma_2 c_2\bigr)
\bigl((1+ \de\sigma_2)\d t_1 - \ga\sigma_2 \d t_2\bigr)^2/Z^3 \\
+ \sigma_2(1-\sigma_2)\bigl((1+ \al\sigma_1)c_2 - \ga \sigma_1 c_1\bigr)
\bigl((1+\al\sigma_1)\d t_2 - \be\sigma_1 \d t_1\bigl)^2/Z^3.
\end{multline}

We now transform this expression into the ansatz~\eqref{metric} for projective
cubes (all quadrilaterals are projectively equivalent). To do this we first
find the base loci of the families of lines with $\sigma_1$ or $\sigma_2$
constant. We find that for $\sigma_1=c_2/(c_1\ga-c_2\al)$,
$1+\al\sigma_1=c_1\ga/(c_1\ga-c_2\al)$ and hence $(\mu_1,\mu_2)=(c_2/\ga,0)$,
independent of $\sigma_2$. Similarly for $\sigma_2=c_1/(c_2\be-c_1\de)$,
$(\mu_1,\mu_2)=(0,c_1/\beta)$, independent of $\sigma_1$. These coordinates
are examples of {\it Segre factorization structures}, as defined in
\cite{ACG2}. We transform the coordinate singularity to infinity by setting
\begin{align*}
\xi_1&=\sigma_1\Delta_1,\quad\Delta_1
=\frac{1}{c_2(1+\al\sigma_1)-c_1\ga\sigma_1}
=\frac{1+(c_1\ga-c_2\al)\xi_1}{c_2},\\
\xi_2&=\sigma_2\Delta_2,\quad\Delta_2
=\frac{1}{c_1(1+\de\sigma_2)-c_2\be \sigma_2}
=\frac{1+(c_2\be-c_1\de)\xi_2}{c_1}
\end{align*}
We then compute that $1+c_1\ga\xi_1+c_2\be\xi_2=c_1 c_2 \Delta_1\Delta_2 Z$ and
\begin{gather*}
\Delta_1+\al\xi_1=(1+c_1\ga\xi_1)/c_2 ,\quad \Delta_2+\de\xi_2=
(1+c_2\be\xi_2)/c_1,\\
(1+ \de\sigma_2)\d t_1 - \ga\sigma_2 \d t_2=
\bigl((1+c_2\be \xi_2)\d t_1 - c_1\ga\xi_2 \d t_2\bigr)/(c_1\Delta_2)\\
(1+\al\sigma_1)\d t_2 - \be\sigma_1 \d t_1=
\bigl((1+c_1\ga\xi_1) \d t_2 - c_2\be\xi_1 \d t_1\bigl)/(c_2\Delta_1),
\end{gather*}
so that, setting $\mu_0=1/(1+c_1\ga\xi_1+c_2\be\xi_2)$, we have
\begin{gather}\begin{split}
h_\Ll\fib=&c_1c_2^3\mu_0\Delta_1\xi_1(\Delta_1-\xi_1)
\bigl(\d t_1-\mu_0c_1\ga (\xi_1\d t_1 +\xi_2 \d t_2)\bigr)^2 \\
&+ c_1^3c_2\mu_0\Delta_2\xi_2(\Delta_2-\xi_2)
\bigl(\d t_2 - \mu_0 c_2\be( \xi_1 \d t_1+\xi_2 \d t_2)\bigl)^2,
\end{split}\\
\label{eq:tau-moment}
\mu_1=\frac{\sigma_1}{\Delta_2 Z}=\frac{c_1c_2\xi_1}{1+c_1\ga\xi_1+c_2\be\xi_2},
\quad
\mu_2=\frac{\sigma_2}{\Delta_1 Z}=\frac{c_1c_2\xi_2}{1+c_1\ga\xi_1+c_2\be\xi_2}
\end{gather}
This has the form~\eqref{metric} with
$A_1(\xi_1)=c_1c_2^3\Delta_1\xi_1(\Delta_1-\xi_1)$ and $A_2(\xi_2)=
c_1^3c_2\Delta_2\xi_2(\Delta_2-\xi_2)$.

We now relate the Levi--K\"ahler metrics to the local forms of ambitoric
metrics studied in \cite{ACG1}. The results depend crucially on whether
$\be=0$ (when the curves $\xi_1=$constant pass through the point at infinity),
$\ga=0$ (when the curves $\xi_2=$constant pass through the point at
infinity), or both (when we have a product structure). We break this down
into three cases as follows.

\subsubsection{The product case}\label{s:product}

This is the case when $\be=0, \ga=0$. Letting $x=\xi_1, y=\xi_2$ the metric
becomes
\[
h_\Ll = \frac{\d x^2}{A(x)} + \frac{\d y^2}{B(y)}
+ A(x)\d t_1^2 + B(y)\d t_2^2
\]
with $A(x),B(y)$ positive valued polynomials of degree $2$ or $3$, i.e., a
local product of extremal toric Riemann surfaces. The construction yields (up
to an equivariant isometry corresponding to affine transformations of $x$ and
$y$) all such products for which $A(x)$ and $B(y)$ are of degree $2$ or $3$
with distinct real roots.
                     
\subsubsection{The Calabi case}\label{s:calabi}

Without loss of generality, this is the case $\be=0$, $\ga\neq 0$, so that
the curves $\xi_1=$constant pass through the point at infinity. We now let
$x=1 + c_1\gamma \xi_1, y=-c_1\gamma \xi_2$ so that the metric becomes  
\begin{equation}\label{calabi}
h_\Ll= \frac{1}{x}
\Bigl(\frac{\d y^2}{B(y)} + B(y)\d t_2^2
+ \frac{\d x^2}{A(x)} + \frac{A(x)}{x^2}\bigl(\d t_1 +y\, \d t_2\bigr)^2\Bigr)
\end{equation}
i.e., given by the Calabi construction with respect to the variables $\bar x =
1/x, y$ (see e.g. \cite{ACG,Eveline}) starting from a toric extremal Riemann
surface $(\Sigma, g_{\Sigma}= \frac{\d y^2}{B(y)} + B(y)\d t_2^2)$, and taking
an extremal toric metric on the fibre associated to the profile function
$\Theta(\bar x) : = \bar A(\bar x)/\bar x$ with $\bar A (\bar x)= \bar x^4
A(1/\bar x)$. Once again, up to affine changes of $x$ and $y$, one covers all
toric metrics of Calabi type for which the functions $A(x),B(y)$ are
polynomials of degree $2$ or $3$ with distinct real roots.

\subsubsection{The negative orthotoric case}\label{s:ambitoric}

This is the generic case when $\be\ga \neq 0$. We can therefore let $x= 1 +
c_1\gamma \xi_, y =-c_2\beta \xi_2$ so that the metric becomes
\begin{equation}\label{ansatz}
h_\Ll= \frac{1}{x-y} \Bigl(\frac{\d x^2}{A(x)} + \frac{\d y^2}{B(y)}\Bigr)
+\frac{A(x)(\d\theta_1+ y \d\theta_2)^2 + B(y)(\d\theta_1+ x \d\theta_2)^2}
{(x-y)^3},
\end{equation}
where $A(x)$ and $B(y)$ are both polynomials of degree $2$ or $3$ with
distinct real roots. (In terms of \cite{ACG1}, the conformal oppositely
oriented K\"ahler metric $(\tilde h_{\Ll},\tilde J)$ is orthotoric.)

\smallbreak

A case by case inspection shows that the conformal factor $w$ such that the
oppositely oriented K\"ahler metric is $\tilde h_{\Ll}= (1/\afn^2)h_{\Ll}$,
where $\afn=1, 1/x$ or $1/(x-y)$, in the above three cases, respectively. We
observe that $\afn$ is an affine function in the momenta with respect to
$\omega_{\Ll}$, which vanishes at the (possibly infinite) intersection points
of the pair of opposite facets of $(\Delta, \Ll)$. We summarize the discussion
as follows.

\begin{prop}\label{p:quotient} Let $(h_\Ll,\omega_\Ll)$ be a Levi--K\"ahler
quotient of $\Sph^3\times \Sph^3\sub \C^2\times \C^2$, corresponding to a
subspace $\g \sub \ab$, and some $\lamc\in \g^*$. Then $(h_\Ll, \omega_\Ll)$
is ambitoric in the sense of \cite{ACG1}, and is either a product, of
Calabi-type or conformal and oppositely oriented to an orthotoric metric,
depending on whether $\g$ intersects nontrivially two, one or zero of the
$2$-dimensional subspaces $(\ab[1],\ab[2])$, where $\ab[i]=\R^2 \sub \R^4$ is
the Lie algebra of the $2$-torus $\T_i^2\sub\T^4$ naturally acting on the
$i$-th factor \textup($i=1,2$\textup) of $\C^4=\C^2\times \C^2$. Furthermore,
in all cases, $h_\Ll$ is expressed in terms of two arbitrary polynomials of
degree $2$ or $3$, each with real distinct roots, whereas the oppositely
oriented K\"ahler metric is $\tilde h_{\Ll}=(1/\afn^2)h_{\Ll}$ for a positive
affine function $\afn$ on the quadrilateral $\Pol_{\g,\lamc}$, vanishing at
the intersection points of its opposite facets. Conversely, any ambitoric
metric of the above mentioned types determined by two polynomials of degree
$2$ or $3$ with distinct real roots\footnote{This constraint comes from the
  fact that $\g$ is a subspace of the Cartan subalgebra $\ab$ consisting of
  diagonal elements of the Lie algebra $\su(1,2)\dsum\su(1,2)$ of the CR
  automorphisms of $\Sph^3\times \Sph^3$.} arises as a Levi--K\"ahler quotient
of $\Sph^3\times \Sph^3$.
\end{prop}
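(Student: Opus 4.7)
The plan is to organize the forward direction around a case analysis of the matrix
\[
C = \begin{pmatrix} \alpha & \gamma \\ \beta & \delta \end{pmatrix}
\]
parametrizing $\g\sub\ab$ via the basis~\eqref{eq:basis}, and to treat the converse by reverse-engineering the data $(\g,\lamc)$ from an ambitoric normal form.

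For the forward direction, I would first observe that a general element $aw_1+bw_2\in\g$ expands as $(a(1{+}\alpha)+b\gamma)e_{10}+a e_{11}+(a\beta+b(1{+}\delta))e_{20}+b e_{21}$, so a direct inspection shows $\g\cap\ab[1]\neq 0$ iff $\beta=0$ and $\g\cap\ab[2]\neq 0$ iff $\gamma=0$. The three subcases of the proposition therefore correspond exactly to $\beta=\gamma=0$, precisely one of $\beta,\gamma$ vanishing, and $\beta\gamma\neq 0$. Each has already been reduced in~\S\ref{s:product},~\S\ref{s:calabi},~\S\ref{s:ambitoric} respectively to a product of extremal toric Riemann surfaces, the Calabi-type form~\eqref{calabi}, or the ambitoric form~\eqref{ansatz} whose $\tilde J$-K\"ahler companion is orthotoric. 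In each case the conformal factor $\afn$ with $\tilde h_\Ll=\afn^{-2}h_\Ll$, read off as $1$, $1/x$, or $1/(x-y)$, is an affine function of the momentum coordinates and vanishes precisely at the (possibly infinite) intersection of each pair of opposite facets of $\Pol_{\g,\lamc}$, as required.

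For the converse, given an ambitoric metric of one of the three types, determined by polynomials $A(x),B(y)$ of degree $2$ or $3$ with distinct real roots, I would reconstruct $(\g,\lamc)$ as follows. The real roots of $A$ and $B$ together with the coefficients defining $\afn$ encode the four affine normals $L_{ir}\in\torh$ of the quadrilateral $\Pol$, hence the linear map $\Ll\colon\ab\to\torh$, from which $\g=\ker(\d\circ\Ll)$ and $\lamc=\Ll\restr\g$ are recovered. One then verifies, by inverting the coordinate change of~\S\ref{s:ambitoric} (and its Calabi/product specializations) and matching it against~\eqref{eq:c-moment} and~\eqref{eq:H}, that the Levi--K\"ahler reduction of $\Sph^3\times\Sph^3$ by $(\g,\lamc)$ reproduces the given metric. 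The presence of pairs of opposite facets meeting at infinity forces $\beta=0$ and/or $\gamma=0$ in the reconstructed $C$, consistent with the forward dichotomy.

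The main obstacle, and the content of the footnote, is verifying that the reality and distinctness of the roots of $A,B$ is both necessary and sufficient for $\g$ to sit inside the real diagonal Cartan of $\su(1,2)\dsum\su(1,2)$: polynomials with complex or repeated roots would produce either non-real $L_{ir}$ or a degenerate labelling in which a facet disappears, both of which fall outside the class of Levi--K\"ahler quotients. Once this is checked, the one-to-one correspondence between pairs $(\g,\lamc)$ satisfying Condition~\ref{cond:comb} for a quadrilateral and ambitoric normal forms of the stated type follows from the explicit formulas.
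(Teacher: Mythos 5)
Your proposal is correct and follows essentially the same route as the paper, whose ``proof'' is precisely the preceding case analysis of \S\ref{s:quad} keyed to the matrix $C$: the product, Calabi and negative orthotoric normal forms of \S\ref{s:product}--\S\ref{s:ambitoric}, with the conformal factor $\afn=1$, $1/x$, $1/(x-y)$ read off case by case, and the converse obtained by inverting the affine changes of variables to recover $(\g,\lamc)$ from the labelled quadrilateral. Your explicit check that $\g\cap\ab[1]\neq 0$ iff $\beta=0$ and $\g\cap\ab[2]\neq 0$ iff $\gamma=0$ is a correct (and welcome) supplement to what the paper leaves implicit, and your remarks on the footnote are consistent with its intended role as commentary rather than a separate step to be proved.
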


\subsection{Toric bundles and Levi--K\"ahler quotients of products of
  spheres}\label{s:generalized-calabi}

The Calabi and the product cases appearing in the analysis in the previous
subsection have a natural generalization to higher dimensions in the framework
of {\it semisimple rigid toric bundle construction} of \cite{ACGT,hfkg5},
where it is also referred to as the {\it generalized Calabi construction}. Let
us first recall briefly the setting of these works.

Let $\pi\colon M\to S$ be a bundle of toric k\"ahlerian manifolds or orbifolds
of the form $M=P\times_{T}V$, for an $\ell$-torus $T$, a principal $T$-bundle
$P$ over a k\"ahlerian manifold $S$ of dimension $2d$, and a toric
$2\ell$-manifold (or orbifold) $V$ with Delzant polytope
$\Delta\sub\tor^*$; thus $M$ has dimension $2m=2(d+\ell)$. We let $F_i$
($i=1,\ldots n$) denote the co-dimension one faces of $\Delta$ and $u_i$ the
primitive inward normals (with respect to a lattice $\Lam$). Let be the
dimension of $M$. Let $\theta\in\Omega^1(M,\tor)$ the connection $1$-form
induced by a principal $T$-connection on $P$, with curvature
$\Omega\in\Omega^1(S,\tor)$.  Suppose that $\Omega^0$ is a closed $2$-form on
$S$. Then the rigid toric bundle construction on $M$ is a K\"ahler metric of
the form
\begin{align}\label{generalized-calabi}
g&= g_0 +\ip{x, g_\Omega}
+\ip{dx,(\bH^V)^{-1},dx}+\ip{\theta, \bH^V,\theta},\\ \nonumber
\omega&= \Omega_0+\ip{x,\Omega}+\ip{dx\wedge\theta},\qquad\qquad
d\theta=\Omega,
\end{align}
where:
\begin{bulletlist}
\item $x\in C^\infty(M,\tor^*)$ is the momentum map of the $T$ action
with image $\Delta$;
\item $\bH^V \in C^\infty(\Delta, S^2\mathfrak t^*)$ is a matrix valued function
which, firstly, satisfies the boundary conditions that on any co-dimension one
face $F_i$, there is a function $h_i$ with
\begin{equation*}
\sum_t \bH^V_{st}(x) (u_i)_t= 0,\qquad
\sum_t\frac{\del \bH^V_{st}}{\del x_r}(x) (u_i)_t=(u_i)_r h_i(x)_s
\end{equation*}
and $\ip{h_i(x),u_i}:=\sum_s h_i(x)_s (u_i)_s=2$ for all $x\in F_i$;
secondly the inverse $(\bH^V)^{-1}\in C^\infty(\Delta, S^2\mathfrak t)$ of $\bH^V$
is the hessian of a function $G_V$ on $\Delta$; thirdly, $\bH^V$ induces a
positive definite metric on the interior of each face $F$ of $\Delta$ (as an
element of $S^2(\mathfrak t/\mathfrak t_F)^*$, where $\mathfrak t_F$ is the
isotropy algebra of $F$);
\item the metric $g_0+\ip{x,g_\Omega}$ associated to $\Omega^0+\ip{x,\Omega}$
  via the complex structure on $S$ is positive definite for all
  $x\in\Delta\sub\tor^*$.
\end{bulletlist}
Throughout, angle brackets denote natural contractions of $\mathfrak t$ with
$\mathfrak t^*$, and we omit pullbacks by $x$ and $\pi$. In particular $x$
itself will denote the standard $\mathfrak t^*$-valued coordinate on $\Delta$,
as well as its pullback to $M$.

The function $G_V$ is called a {\it symplectic potential} for $\bH^V$ and is
determined up to an affine function on $\tor^*$.  According
to~\cite[Thm.~2]{Abreu1} and \cite[Rem.~4.2]{ACGT}, the boundary and
positivity conditions above can be equivalently formulated in terms of $G_V$,
by requiring that $G_V$ is smooth and strictly convex on the interior
$\mathring{\Delta}$ of $\Delta$, such that
\begin{equation}\label{abreu}
\begin{split}
  & G_V -\frac{1}{2} \sum_{i=1}^kL_i \log L_i  \in C^{\infty}(\Delta) \\
  & \det (\Hess G_V)\prod_{i=1}^{k}L_k \in C^{\infty}(\Delta)
  \quad\text{and is strictly positive},
 \end{split}
 \end{equation}
where $L_i=\ip{u_i,x}-v_i, i=1, \ldots k$ are the labels defining $\Delta$.

Let $X$ be a holomorphic vector field os $S$ which is hamiltonian with respect
to $\Omega_0+\ip{x,\Omega}$ for all $x\in\Delta$. Thus $-\imath_X
(\Omega_0+\ip{x,\Omega})=\d f_0+\ip{x,\d f_\Omega}$ for functions $f_0\in
C^\infty(S,\R)$ and $f_\Omega\in C^\infty(S,\tor)$. Generalizing an
observation from the proof of \cite[Lemma~5]{hfkg5}, any such $X$ can be
lifted to a hamiltonian Killing vector field of $(M, g, \omega)$:
\begin{equation}\label{killing-lift}
\hat X= X^H + \ip{f_\Omega,K},
\end{equation}
where $K:= \grad_{\omega} x \in C^\infty(M, TM) \otimes \tor^*$ is
the family of hamiltonian vector fields generated by the principal $T$ bundle
$P$, and $X^H$ denotes the horizontal lift of $X$ (to the kernel of $\theta$).
Indeed, $-\imath_{\hat X} \omega = -\imath_X (\Omega_0+\ip{x,\Omega}) +
\ip{f_\Omega,\d x}= \d(f_0+\ip{x,f_\Omega})$, so $\hat X$ has hamiltonian
$f_0+\ip{x,f_\Omega}$ (omitting pullbacks of $f_0$ and $f_\Omega$ to $M$).

Suppose now that the family $g_0+\ip{x,g_\Omega}$ of K\"ahler metrics on $S$
is toric with respect to a fixed torus action of a torus $T_S$ with Lie
algebra $\tor_S$. For each fixed $x$, the momentum map of $T_S$ may be written
$\xi_0+\ip{x,\xi_\Omega}$, where $\xi\in C^\infty(S,\tor_S^*)$ and $\xi_\Omega
\in C^\infty(S,\tor_S^*\otimes \tor)$, and pulling back these functions to
$M$, $\xi_0+\ip{x,\xi_\Omega}$ is the momentum map for the $T_S$ action on $M$
defined by lifting the generators to $M$ using~\eqref{killing-lift}. Since
these lifts commute with $K$, $M$ is toric under the combined action of
$T\times T_S$.

\begin{lemma}\label{l:symplectic-fibering} Let $(M,g, \omega)$ be a K\"ahler
manifold or orbifold given by \eqref{generalized-calabi} with fibrewise
symplectic potential $G_V$, and suppose that $(S, \Omega_0+\ip{x,\Omega})$
is toric with respect to a fixed action of $T_S$ for all $x\in\Delta$.
Then, $(M,g,\omega)$ is toric and with respect lifted $T\times T_S$ action
and has a symplectic potential
\begin{equation}\label{eq:sf}
G_M = G_V + G_0 + \ip{x,G_\Omega},
\end{equation}
where for eached fixed $x$, $G_0 + \ip{x,G_\Omega}$ is a symplectic potential
for $g_0+\ip{x,g_\Omega}$ on $S$.
\end{lemma}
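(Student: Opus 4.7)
The plan is to introduce action-angle coordinates on $M$ for the lifted toric action and verify the formula by direct Hessian computation.

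First, I establish the toric structure. For each generator of $\tor_S$, the lift $\hat X = X^H + \ip{f_\Omega, K}$ from~\eqref{killing-lift} commutes with the principal $T$-action, integrates to a $T_S$-action on $M$, and is Hamiltonian with momentum $y := \xi_0 + \ip{x, \xi_\Omega}$ as observed in the preamble to the Lemma. Combined with the fibrewise $T$-action, this yields a toric $T\times T_S$-action of dimension $m = d+\ell$ with momentum map $(x,y)$.

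Second, I introduce action-angle coordinates. In a local trivialisation of $P$ the connection takes the form $\theta = \d\bt + \alpha$, where $\alpha = \ip{\xi_\Omega, \d\phi}$ is a $\tor$-valued local potential for $\Omega$ on $S$ (with $\phi$ the $T_S$-angles on $S$, pulled back to $M$). Using $\Omega_0 = \ip{\d\xi_0\wedge\d\phi}$ and $\Omega = \ip{\d\xi_\Omega\wedge\d\phi}$, a direct computation shows that the $\ip{\d x\wedge\alpha}$ contributions coming from $\ip{x,\Omega}$ and from $\ip{\d x\wedge\theta}$ cancel each other, giving
\begin{equation*}
\omega_M = \ip{\d y\wedge \d\phi} + \ip{\d x\wedge \d\bt};
\end{equation*}
hence $(x, y, \bt, \phi)$ are action-angle coordinates on $M$.

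Third, I compute $g_M$ in this basis. The key identity, obtained by implicit differentiation of $y = \xi_0 + \ip{x, \xi_\Omega}$ with Jacobian $J := \partial y/\partial \xi_0$, is $\pi_* \partial/\partial x = -J^{-1}f_\Omega\transp\,\partial/\partial \xi_0$, which forces $(g_0 + \ip{x, g_\Omega})$ to contribute to the $(\d x)^2$ and $(\d x\,\d y)$ blocks of $g_M$ in addition to $(\d y)^2$ and $(\d\phi)^2$; the term $\bH^V\theta^2$ with $\theta(\partial/\partial\phi) = f_\Omega$ produces matching $\d\bt\,\d\phi$ cross-terms. A short calculation then shows that the action and angle blocks of $g_M$ are mutually inverse matrices, and that the action block equals the Hessian of $G_M = G_V(x) + G_0 + \ip{x, G_\Omega}$ in the coordinates $(x, y)$: the $(yy)$-block is $\Hess_y(G_0 + \ip{x, G_\Omega}) = (\bH^S)^{-1}$ by hypothesis, and the $(xy)$ and $(xx)$ blocks reduce, via the chain rule and the linearity in $x$ of $G_0 + \ip{x, G_\Omega}$ (viewed as a function on $S$), to $-f_\Omega(\bH^S)^{-1}$ and $(\bH^V)^{-1} + f_\Omega(\bH^S)^{-1}f_\Omega\transp$ respectively.

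The hardest part is this last bookkeeping: the correction $f_\Omega(\bH^S)^{-1}f_\Omega\transp$ to $(\bH^V)^{-1}$ must appear with matching signs both on the metric side (from pulling back $g_0 + \ip{x, g_\Omega}$ through the nontrivial projection of $\partial/\partial x$) and on the Hessian side (from the implicit $x$-dependence of $z$ in $G_0(z) + \ip{x, G_\Omega(z)}$), so that they reinforce rather than cancel.
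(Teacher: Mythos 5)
Your metric-side bookkeeping is correct: for the generators $K$ of $T$ and $\hat X=X^H+\ip{\xi_\Omega,K}$ of the lifted $T_S$-action, the orbit inner products are $\bH^V$, $\bH^V\xi_\Omega$ and $\bH^S+\xi_\Omega\transp\bH^V\xi_\Omega$ (where $\bH^S$ denotes the restriction of $g_0+\ip{x,g_\Omega}$ to the $T_S$-orbits), and block inversion gives exactly the three blocks you state, $(\bH^S)^{-1}$, $-\xi_\Omega(\bH^S)^{-1}$ and $(\bH^V)^{-1}+\xi_\Omega(\bH^S)^{-1}\xi_\Omega\transp$. The genuine gap is on the Hessian side. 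The only property of $G_0+\ip{x,G_\Omega}$ you invoke is that its $y$-Hessian at each fixed $x$ is $(\bH^S)^{-1}$; but this determines the pair $(G_0,G_\Omega)$ only up to adding, for each $x$, an affine function of $y$ whose coefficients depend on $x$, and such a change leaves the $(yy)$-block untouched while altering the $(xy)$ and $(xx)$ blocks. Hence these blocks cannot ``reduce, via the chain rule and the linearity in $x$'', to the stated expressions from your hypothesis alone. Concretely, in the Calabi case $\Omega_0+\ip{x,\Omega}=(c+px)\omega_S$ with $\omega_S$-momentum $\zeta$ and $y=(c+px)\zeta$ (take $cp\neq0$), replacing $G_\Omega$ by $G_\Omega+\zeta$ preserves your hypothesis (for fixed $x$ one adds $\tfrac{x}{c+px}\,y$, linear in $y$), yet changes $G_V+G_0+\ip{x,G_\Omega}$ by $xy/(c+px)$, which is not affine in $(x,y)$; the modified sum is then \emph{not} a symplectic potential although it satisfies everything you assume. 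So your verification proves a false statement for some admissible $(G_0,G_\Omega)$, and moreover never produces a good pair, which is the actual content of the lemma (an existence statement).

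There are two ways to close this. Either add a normalization: require that $\partial_y\bigl(G_0+\ip{x,G_\Omega}\bigr)$, viewed as a $\tor_S^*$-valued function on $S$, be independent of $x$ (it is then the pluriharmonic coordinate conjugate to the $T_S$-angles, determined by $J_S$ alone); with this pinned down your chain-rule computation does deliver the stated blocks, but you must still prove such a pair exists. Or argue as the paper does, which avoids the verification altogether: rigidity of the $T$-action shows that along the momentum image of each fibre, i.e.\ along the graphs $y=\xi_0(z)+\ip{x,\xi_\Omega(z)}$ with $z\in S$ fixed, the second $x$-derivative of $G_M$ is $(\bH^V)^{-1}$ --- note from your own blocks that $\bigl[(\bH^V)^{-1}+\xi_\Omega(\bH^S)^{-1}\xi_\Omega\transp\bigr]-2\,\xi_\Omega(\bH^S)^{-1}\xi_\Omega\transp+\xi_\Omega(\bH^S)^{-1}\xi_\Omega\transp=(\bH^V)^{-1}$, and that this is \emph{not} the $(xx)$-block at fixed $y$ --- so $G_M-G_V$ is affine in $x$ with coefficients $G_0,G_\Omega$ that are functions on $S$; then the K\"ahler quotient result of \cite{CDG} (restriction of the ambient potential to a momentum level is a potential for the reduced metric) identifies $G_0+\ip{x,G_\Omega}$ as a symplectic potential of $g_0+\ip{x,g_\Omega}$. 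A smaller point: to equate the ``action block'' of $g_M$ in your coordinates with $\Hess G_M$ you also need $\d\bt$ and $\d\phi$ to annihilate $J$ of the torus generators; it is cleaner to use the coordinate-free criterion $\Hess G_M=\bigl(g(K_a,K_b)\bigr)^{-1}$, which is precisely the block inversion you have already carried out.
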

\begin{proof} The torus action $T$ on $(M,g,\omega)$ is \emph{rigid}, meaning
that the metric on the torus orbits depends only on the value of the momentum
map $x$. Hence for each fixed $\xi\in\tor_S$, the restriction of $G_M$ to
$\tor^*+\xi\cong\tor^*$ differs from $G_V$ by an affine function of $x$, hence
has the form~\eqref{eq:sf} for functions $G_0$ and $G_\Omega$ of
$\xi\in\tor_S$.  By construction, for each fixed $x\in\Delta$,
$(S,g_0+\ip{x,g_\Omega})$ is the K\"ahler quotient of $M$ by $T$ at momentum
level $x$. Hence by~\cite{CDG}, the restriction of $G_M$ to
$x+\tor_S^*\cong\tor_S^*$ is a symplectic potential for $g_0+\ip{x,g_\Omega}$,
hence so is $G_0 + \ip{x,G_\Omega}$.
\end{proof}

A rigid toric bundle metric~\eqref{generalized-calabi} is \emph{semisimple} if
$S$ is a product of K\"ahler manifolds $(S_j,\omega_j)$ and there exist
$c_j\in\R$ and $p_j\in\mathfrak t$ (for $j\in\{1,2,\ldots N\}$) such that
$\Omega_0+\ip{x,\Omega}=\sum_{j=1}^N (c_j + \ip{p_j,x})\omega_j$. If each
$(S_j,\omega_j)$ is toric under a torus $T_j$ with Lie algebra $\tor_j$, then
$(S, \Omega_0+\ip{x,\Omega})$ is toric for all $x$ under the product action of
$T_S=\prod_{j=1}^N T_j$ with $\tor_S=\bigoplus_{j=1}^N \tor_j$.  We refer to
this special case as the \emph{toric generalized Calabi ansatz}.

\begin{prop} \label{p:fibration} Suppose that $(g,\omega)$ is a K\"ahler
metric obtained by the toric generalized Calabi ansatz, where the
fibre $V$ is a compact toric orbifold with labelled polytope $(\Delta,
L_1, \ldots, L_k)$, and each factor $(S_j, \omega_j)$ of the base $S=
\prod_{j=1}^N S_j$ is a compact toric orbifold with labelled Delzant polytope
$(\Delta_j, L^j_1, \ldots, L^j_{k_j})$, respectively.  Then $(g, \omega)$ is a
toric K\"ahler metric on the compact symplectic orbifold with
labelled Delzant polytope
\[
\hat \Delta
=\{ L_i (x) \ge 0, \ \hat L^j_{r_j}:= (\langle p_j, x) + c_j) L^j_{r_j} \ge 0 \}.
\]
Moreover, if the fibrewise toric metric determined by $G^V$ and the K\"ahler
metrics $\omega_j$ are all Levi--K\"ahler quotients of product spheres, then
the resulting metric \eqref{generalized-calabi} is a Levi--K\"ahler quotient
of the overall product of spheres.
\end{prop}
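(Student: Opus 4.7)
The plan is to prove the proposition in two steps: first identify $\hat\Delta$ as the momentum image of $(g,\omega)$, then show that its symplectic potential matches the Levi--K\"ahler formula of Theorem~\ref{t:symplectic-potential}. For the polytope I would apply Lemma~\ref{l:symplectic-fibering}: $M$ is toric under $T\times T_S$ with $T_S=\prod_j T_j$, the momentum map of $T$ is $x\in\Delta$ (bounded by $L_i(x)\geq 0$), and in the semisimple setting the base form at fixed $x$ is $\sum_j f_j(x)\,\omega_j$ with $f_j(x):=c_j+\langle p_j,x\rangle>0$ on $\Delta$. Thus the momentum image of $T_j$ (lifted via~\eqref{killing-lift}) is $f_j(x)\Delta_j$, and the inequalities $L^j_{r_j}(y_j/f_j(x))\geq 0$ rearrange to $\hat L^j_{r_j}(x,y_j):=f_j(x)L^j_{r_j}(y_j/f_j(x))\geq 0$, which is affine in $(x,y_j)$, yielding $\hat\Delta$. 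For the second claim, Theorem~\ref{t:product-spheres} reduces matters to showing that (i) $\hat\Delta$ has the combinatorial type of a product of simplices, and (ii) the symplectic potential $G_M$ of $(g,\omega)$ matches the Levi--K\"ahler formula for $\hat\Delta$. Part (i) is essentially immediate: since $f_j>0$ on $\Delta$, the vanishing of $\hat L^j_{r_j}$ is equivalent to that of $L^j_{r_j}(y_j/f_j(x))$, so faces of $\hat\Delta$ factor as $F\times\prod_j F_j$, giving $\Cb_{\hat\Delta}\cong\Cb_\Delta\times\prod_j\Cb_{\Delta_j}$, which is a product of simplices whenever each factor is.

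For (ii) I would decompose $\Delta$ as a combinatorial product of simplices $\Sigma_a$ with affine normals $L_{a,r}$ (for $r\in I_a\cup\{\infty\}$, with $\sum_r L_{a,r}=0$), and similarly each $\Delta_j$ as $\prod_b\Sigma_{j,b}$ with normals $L^j_{b,r}$. Theorem~\ref{t:symplectic-potential} then gives the Levi--K\"ahler potentials $G_V(x)=\tfrac12\sum_{a,r}L_{a,r}(x)\log|L_{a,r}(x)|$ and $G_j(y_j)=\tfrac12\sum_{b,r}L^j_{b,r}(y_j)\log|L^j_{b,r}(y_j)|$, and Lemma~\ref{l:symplectic-fibering} yields $G_M(x,y)=G_V(x)+\sum_j f_j(x)G_j(y_j/f_j(x))$. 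The key computation is that, setting $\hat L^j_{b,r}(x,y_j):=f_j(x)L^j_{b,r}(y_j/f_j(x))$ (an affine function of $(x,y_j)$),
\begin{equation*}
f_j(x)G_j(y_j/f_j(x))=\tfrac12\sum_{b,r}\hat L^j_{b,r}\log|\hat L^j_{b,r}|-\tfrac{\log f_j(x)}{2}\sum_{b,r}\hat L^j_{b,r},
\end{equation*}
and for each $b$ the second term vanishes because $\sum_{r\in I_{j,b}\cup\{\infty\}}\hat L^j_{b,r}(x,y_j)=f_j(x)\sum_r L^j_{b,r}(y_j/f_j(x))=0$. Hence $G_M$ coincides with the Levi--K\"ahler potential for $\hat\Delta$ viewed combinatorially as a product of the simplex factors $\Sigma_a$ and $\Sigma_{j,b}$, and Theorem~\ref{t:product-spheres} identifies $(g,\omega)$ with the Levi--K\"ahler quotient of $N_V\times\prod_j N_{S_j}$ associated to $\hat\Delta$.

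The main obstacle I anticipate is the careful accounting of the ``infinity'' labellings $L_{a,\infty}$ and $L^j_{b,\infty}$ across the nested product structure, and verifying that $\hat L^j_{b,\infty}:=f_jL^j_{b,\infty}(\cdot/f_j)=-\sum_{r\in I_{j,b}}\hat L^j_{b,r}$ is the correct extended label for $\hat\Delta$; the clean cancellation driving the key computation depends on the normalization $\sum_{r\in I_{j,b}\cup\{\infty\}}L^j_{b,r}=0$ for each simplex factor, together with the positivity of $f_j$ on $\Delta$ needed to identify $\hat\Delta$ as an honest convex polytope with the expected facet structure.
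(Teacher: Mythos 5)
Your treatment of the ``Moreover'' claim is essentially the paper's own argument: decompose the fibre and base polytopes into simplex factors, take the Levi--K\"ahler potentials from Theorem~\ref{t:symplectic-potential}, combine them via Lemma~\ref{l:symplectic-fibering}, and use the zero-sum normalization $\sum_{r}L^j_{b,r}=0$ on each simplex factor to absorb the $\log f_j$ terms into the rescaled labels $\hat L^j_{b,r}=f_jL^j_{b,r}(\cdot/f_j)$; concluding with Theorem~\ref{t:symplectic-potential} (and the product-of-simplices combinatorics, which you check correctly using $f_j>0$) is exactly how the paper finishes. So that half is fine and needs no further comment.

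The gap is in the first claim. Identifying the momentum image of the lifted $T\times T_S$-action as the set $\hat\Delta$ does not yet establish that $(M,\omega)$ is the compact toric symplectic orbifold with \emph{labelled} polytope $\bigl(\hat\Delta;\,L_i,\hat L^j_{r_j}\bigr)$: the labels carry the orbifold structure groups along the facets (Lerman--Tolman), and your argument produces the facet-defining affine functions only up to the normalization you impose by fiat when you write $\hat L^j_{r_j}:=f_jL^j_{r_j}(\cdot/f_j)$. Determining the correct normalization from the lifted action is not immediate, because the lifts \eqref{killing-lift} have vertical components $\ip{f_\Omega,K}$, so the stabilizers of the lifted $T_j$-action along the base facets are not simply those of $T_j$ on $S_j$. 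The paper resolves this by verifying the compactification criterion \eqref{abreu} for $G_M$ with respect to precisely these labels --- smoothness of $G_M-\frac12\bigl(\sum_i L_i\log L_i+\sum_{j,r_j}\hat L^j_{r_j}\log\hat L^j_{r_j}\bigr)$ together with the determinant identity \eqref{det} --- and these conditions are scale-sensitive, hence pin the labels down; note also that the paper's first claim is proved for arbitrary $G_V$, $G_j$ satisfying \eqref{abreu}, not only the Levi--K\"ahler ones. Your proof could be repaired either by running that boundary-condition check, or by observing that the metric \eqref{generalized-calabi} is smooth on the compact orbifold $M$ by construction, that the Abreu conditions relative to a labelling are necessary and sufficient for smooth extension, and that your potential identity exhibits them for the labelling $\{L_i,\hat L^j_{r_j}\}$ --- but some such step must be made explicit; as written, the passage from ``momentum image $=\hat\Delta$'' to ``labelled Delzant polytope $=\hat\Delta$ with these labels'' is missing.
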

\begin{proof} We check that $G_M$ satisfies the conditions \eqref{abreu}.
Lemma~\ref{l:symplectic-fibering} and the fact that $\langle p_j, x \rangle +
c_j$ is strictly positive on $\Delta$ imply that $G_M$ differs by a smooth
function
\[
G_M^0:= \frac{1}{2}\Big(\sum_{i=1}^{k} L_i \log L_i
+ \sum_{j=1}^N\Big(\sum_{r_j=1}^{k_j} \hat L^j_{r_j} \log \hat L^j_{r_j}\Big) \Big).
\]
It remains to see that $\det(\Hess(G_M)) \bigl(\prod_{i=1}^{k}
L_i\bigr)\bigl(\prod_{j, r_j} \hat L^j_{r_j}\bigr)$ is smooth and positive on
$\smash{\hat \Delta}$.  The determinant $\det(\Hess(G_M)^{-1})$ is, up to a
positive scale, the norm with respect to $g$ of the wedge product of the
Killing vector fields $(K_1, \ldots K_{\ell}, \hat K^j_{r_j})$ for $j=1, \ldots N$,
$r_j=1, \ldots d_j$.  Using \eqref{killing-lift} and the specific form
\eqref{generalized-calabi} of the metric $g$, one sees that
\[
\det(\Hess(G_M)^{-1})
= C \Big(\prod_{j=1}^{N} (\langle p_j, x \rangle + c_j)^{d_j}\Big)
\det (\Hess(G_V)^{-1}) \det(\Hess (G_j)^{-1}),
\]
where $C>0$ is constant. Using the compactification criteria~\eqref{abreu} for
each $G_V$ and $G_j$, and the fact that $\hat \Delta$ is a simple polytope,
near any point $y$ on a face $\hat F \sub \hat \Delta$ we have:
\begin{equation}\label{det}
\begin{split}
\det(\Hess(G_M)^{-1})
&= \delta\Bigl(\prod_{i=1}^k L_k\Bigr)\Bigl(\prod_{j=1}^N (\ip{p_j,x} + c_j)^{d_j}
\Bigl(\prod_{r_j=1}^{k_j} L^j_{r_j}\Bigr)\Bigr) \\
&= \delta'\Bigl(\prod_{i=1}^k L_k\Bigr)
\prod_{j=1}^N \Bigl(\prod_{r_j=1}^{k_j}\hat L^j_{r_j}\Bigr),
\end{split}
\end{equation}                                                          
where $\delta$ and $\delta'$ are smooth positive functions around $y \in
\smash{\hat \Delta}$.

For the second part, we assume by Theorem~\ref{t:symplectic-potential} that
$G_V= \frac{1}{2}\sum_{ir} L_{ir} \log L_{ir}$ and $G_j = \frac{1}{2}
\sum_{q_jr_j} L^j_{q_jr_j} \log L^j_{q_jr_j}$ with $\sum_r L_{ir}=0$ and
$\sum_{r_j} L^j_{q_jr_j}=0$. Then, by Lemma~\ref{l:symplectic-fibering},
\begin{equation*}
\begin{split}
G_M &= \frac{1}{2}\Big( \sum_{ir} L_{ir} \log L_{ir} + \sum_{j=1}^N (\langle p_j, x \rangle + c_j)\Big( \sum_{q_jr_j} L^j_{q_jr_j} \log L_{q_jr_j} \Big)\Big) \\
&= \frac{1}{2} \Big( \sum_{ir} L_{ir} \log L_{ir} + \sum_{j=1}^N \Big( \sum_{q_jr_j} \hat L^j_{q_jr_j} \log L_{q_jr_j} \Big)\Big)  \\
&= \frac{1}{2} \Big( \sum_{ir} L_{ir} \log L_{ir} + \sum_{j=1}^N \Big( \sum_{q_jr_j} \Big(\hat L^j_{q_jr_j} \log \hat L_{q_jr_j} - \sum_{q_jr_j} L_{q_jp_j}\log(\langle p_j, x \rangle + c_j)\Big) \Big)\Big)   \\
&= \frac{1}{2} \Big( \sum_{ir} L_{ir} \log L_{ir} + \sum_{j=1}^N \Big( \sum_{q_jr_j} \hat L^j_{q_jr_j} \log \hat L_{q_jr_j} \Big)\Big)
\end{split}
\end{equation*}
We conclude by using Theorem~\ref{t:symplectic-potential} again.
\end{proof}

\begin{rem}\label{r:local-explanation} The toric Calabi construction provides
a practical method for constructing new toric metrics from old ones. Suppose
$(g_j,\omega_j)$ and $(g^V, \omega_V)$ are toric K\"ahler metrics on
$\mathring{\Delta}_j \times \T^{d_j}$ and $\mathring{\Delta} \times \T^{\ell}$
respectively, for labelled (simple convex, compact) polytopes $\Delta_j =
\{x^j \in \R^{d_j} : L_{r_j}^j(x^j) \ge 0, \ r_j=1, \ldots k_j \}$
($j=1,\ldots N$) and $\Delta=\{x\in\R^\ell : L_i(x)\ge 0, \ i=1,\ldots N \}$.
Let $p_j = (p_{j1}, \ldots , p_{j\ell})$, $\theta = (\theta_1, \ldots,
\theta_{\ell})$ with
\[
\theta_i= dt_i +
\sum_{j=1}^{N}p_{ji}\Big(\sum_{r_j=1}^{d_j}x^j_{r_j}dt^j_{r_j}\Big),
\]
where the affine functions $\sum_{i=1}^{\ell} p_{ji} x_i + c_j$ are positive
on $\Delta$. Then, as in Propostion~\ref{p:fibration}, we get a toric K\"ahler
metric on the interior of $\hat \Delta$ times $\T^{\ell} \times \T^{d_1}
\times \cdots \times \T^{d_N}$, whose symplectic potential $G_M$ is given by
Lemma~\ref{l:symplectic-fibering}. Taking all the affine functions $L^j_{r_j}$
and $L_i$, and $(\sum_{i=1}^{\ell} p_{ji} x_i + c_j)$ be all with rational
coefficients, one gets a rational labelled polytope $\hat \Delta$, and if the
ingredient metrics $(g_j,\omega_j)$ and $(g^V, \omega_V)$ satisfty the Abreu
boundary conditions \eqref{abreu} for the corresponding labellings $L^j_{r_j}$
and $L_i$, then so does the metric given (locally) by the toric generalized
Calabi Ansatz, with the labelling defined in Proposition~\ref{p:fibration}.
Hence the metric compactifies on the compact toric orbifold $M$ given by
$\hat\Delta$ and these labels.
\end{rem}

\section{Curvature of Levi--K\"ahler quotients of products of spheres}
\label{s:c-lkq}
\subsection{Bochner condition}

Recall (see e.g.~\cite{ACG1,Bryant}) that if $R\in \Wedge^{1,1}\C^{m*}\otimes
\un(m)$ is a \emph{formal K\"ahler curvature tensor}, i.e.,
$R_{u,v}(w)+R_{v,w}(u)+R_{w,u}(v)=0$, then the \emph{Bochner part} of $R$ is
its orthogonal projection $\cB(R)$ onto the $U(m)$-submodule of formal
K\"ahler curvature tensors with vanishing Ricci trace. The \emph{Bochner
  tensor} $B^g$ of a K\"ahler manifold is then the (pointwise) Bochner part
$\cB(R^g)$ of its Riemannian curvature $R^g\in\Gamma(\Wedge^{1,1}T^*M\otimes
\un(TM))$. One can extend this definition to more general Hermitian curvature
tensors $R\in \Wedge^{2}\C^{m*}\otimes\un(m)$ (which do not a priori satisfy
the Bianchi identity) where one still denotes by $\mathcal{B}(R)$ the
orthogonal projection of an element $R$ onto the $U(m)$-submodule of formal
K\"ahler curvature tensors with vanishing Ricci trace.

In our language, S. Webster~\cite{webster} showed that in codimension one the
Bochner tensor of a Levi--K\"ahler quotient $M$ of CR manifold $N$ pulls back
to the Chern--Moser tensor of $N$, which vanishes when the $N$ is locally CR
diffeomorphic to a standard CR sphere $(\Sph^{2m+1}, \Ds, J)$. In particular,
every Levi--K\"ahler quotient $(M^{2m},g, J)$ of $\Sph^{2m+1}$ is
Bochner-flat. We generalize this to arbitrary codimension, by describing the
curvature of K\"ahler metrics arising as Levi--K\"ahler quotients of a product
of CR-spheres.

Let $N= \Sph^{2m_1+1} \times \cdots \times \Sph^{2m_{\ell}+1} \sub \C_\cS$
be a product of standard CR spheres and $\bigl(\Ds=\bigoplus_{i\in\cI}\Ds_i,
J=\bigoplus_{i\in\cI} J_i\bigr)$ be the product CR structure and denote by
$N_i\cong\Sph^{2m_i+1}$ the $i$-th factor of $N$, with projection $p_i\colon
N\to N_i$. The bundle $p_i^* TN_i$ is identified with the subbundle
$E^i:=\bigcap_{j\neq i} \ker (p_{j*})$ of $TN$ via the restriction
$p_{i*}\colon E_z^i \stackrel{\simeq}{\longrightarrow} T_{p_i(z)}N_i$. We
denote the projection $r_i\colon TN \to E^i$.

Let $(\g,\lamc)$ be a positive Levi pair (corresponding to $\Ll$) and assume
that $\g$ is the Lie algebra of a subtorus $G$ of $\Ab$; denote by $M = N/G$
the Levi--K\"ahler quotient and by $\pi\colon N \to M$ the quotient map.  The
global assumption on $\g$ is made purely to make statements about $M$ rather
than local quotients.  In addition to a K\"ahler structure $(\check{g}=h_\Ll,
J, \omega_\Ll)$, $M$ inherits of a $\check{g}$-orthogonal splitting of its
tangent space, namely
\begin{equation}\label{orthogonal}
TM = \bigoplus_{i\in\cI} \Dsm_i
\end{equation}
where $\Dsm_i=\pi_*(\Ds_i)$. We denote by $\check{\nabla}^i$ the connection on
$\Dsm_i$ induced by the Levi--Civita connection $\check{\nabla}$ of
$\check{g}$, by $R^{\check{\nabla}^i}$ the corresponding curvature tensor (a
section of $\Wedge^2T^*M\otimes\mathfrak{gl}(\Dsm_i,J)$, where
$\mathfrak{gl}(\Dsm_i,J)$ denotes the bundle of $J$-commuting endomorphisms of
$\Dsm_i$), and by $B^i:=\mathcal{B}^i(R^{\check{\nabla}^i})$ the Bochner
projection of
$R^{\check{\nabla}^i}|_{\Dsm_i}\in\Wedge^2\Dsm_i^*\otimes\mathfrak{gl}(\Ds_i,J)$.

\begin{prop}\label{BochnerFlatOnFactor} For each $i\in\cI$, $B^i=0$.
\end{prop}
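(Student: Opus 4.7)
The plan is to reduce the vanishing of $B^i$ to Webster's theorem applied to the $i$-th sphere factor. The key is the product structure: because $N = \prod_i \Sph^{2m_i+1}$ is a \emph{product} of CR spheres, the distribution $\Ds_i \subset TN$ comes equipped with its own Levi form $\Lv_{\Ds_i}$ (the $i$-th component of $\Lv_\Ds$), and the mixed components of $\Lv_\Ds$ between $\Ds_i$ and $\Ds_j$ for $j\neq i$ vanish. Consequently, although the Levi--K\"ahler quotient uses the whole pair $(\g,\lamc)$, the restriction of the resulting K\"ahler structure to $\Dsm_i$ decouples from the other factors in a way that lets one read off $R^{\check\nabla^i}|_{\Dsm_i}$ from the $i$-th factor alone.

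First I would make this decoupling precise. Using the adapted frame on $N$ with $\Ds = \bigoplus_i \Ds_i$ and the explicit form of $h_{\Ds,\lamc}$ given in~\eqref{eq:Ds-metric}, I would observe that $h_{\Ds,\lamc}|_{\Ds_i} = 2\chi_i \, h_{\Ds_i}^{\mathrm{std}}$, where $h_{\Ds_i}^{\mathrm{std}}$ is the standard horizontal metric on $\Ds_i$ coming from the round sphere. Thus, along each leaf of the orthogonal foliation projecting to $\Dsm_i$, the metric on $\Dsm_i$ is, up to the positive function $2\chi_i$, the horizontal K\"ahler metric of a standard Sasaki structure on $\Sph^{2m_i+1}$ induced by a Reeb vector field transverse to $\Ds_i$. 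In other words, restricted to $\Dsm_i$-directions, the geometry is locally that of a codimension-one Levi--K\"ahler quotient of the $i$-th round CR sphere (as in Example~\ref{ex:bochner-flat}).

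Next I would transfer the curvature identity. The tensor $R^{\check\nabla^i}|_{\Dsm_i} \in \Wedge^2\Dsm_i^*\otimes\mathfrak{gl}(\Dsm_i,J)$ can then be identified with the corresponding restricted curvature of the induced connection on the horizontal distribution of this auxiliary codimension-one Levi--K\"ahler quotient. By Webster's theorem~\cite{webster}, the Bochner projection of the full curvature of such a quotient equals the pullback of the Chern--Moser tensor of the underlying CR manifold. Since $(\Sph^{2m_i+1},\Ds_i,J_i)$ is CR-flat (its Chern--Moser tensor vanishes identically), the Bochner projection of $R^{\check\nabla^i}|_{\Dsm_i}$ must vanish, which is exactly $B^i=0$.

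The main technical obstacle is justifying rigorously that only the $i$-th factor matters for $R^{\check\nabla^i}|_{\Dsm_i}$, despite the fact that $\check\nabla$ is the Levi--Civita connection of the whole metric $\check g$ and mixes all factors. I would handle this by computing the second fundamental forms of the orthogonal distributions $\Dsm_i$ with respect to $\check\nabla$, showing (using Proposition~\ref{p:legendrian} and the $J$-invariance of the decomposition) that the components of $\check\nabla$ responsible for the curvature of $\check\nabla^i$ evaluated on pairs of vectors from $\Dsm_i$ only involve $\chi_i$ and the intrinsic geometry of $\Sph^{2m_i+1}$. Once this splitting is established, Webster's argument applies factor by factor and yields the claimed vanishing.
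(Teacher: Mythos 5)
Your overall strategy---reduce $B^i=0$ to the vanishing of the Chern--Moser tensor of the round sphere via a Webster-type comparison---is the same as the paper's, but the pivotal step you flag as ``the main technical obstacle'' is exactly where the argument has a genuine gap, and it cannot be closed in the way you propose. The claim that $R^{\check\nabla^i}\restr{\Dsm_i}$ can be identified with the curvature of an auxiliary \emph{codimension-one Levi--K\"ahler quotient} of $\Sph^{2m_i+1}$ is false in general. On a slice $N_i\times\{w\}$ the induced contact form is $\alpha_{w,i}=\iota_w^*\eta^\lamc$, and its Reeb field $\Rv_{w,i}$ (coming from $\Rv_i\in\Lb{\g,i}$, the $E^i$-component of a vertical field $K_{\eta(\cdot)}$) is in general \emph{not} a CR vector field of $(\Sph^{2m_i+1},\Ds_i,J_i)$, so there is no codimension-one quotient of the $i$-th sphere whose Levi-Civita connection you could compare with; calling the slice geometry ``a standard Sasaki structure'' overstates what $2\chi_i$ (a function of all the factors) gives you. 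Moreover, the horizontal part of $\Rv_i$ lies in $\bigoplus_{j\neq i}\Ds_j$; equivalently, brackets of sections of $\Dsm_i$ on $M$ have components in the other $\Dsm_j$, so $\Dsm_i$ is not integrable and $R^{\check\nabla^i}$ evaluated on $\Wedge^2\Dsm_i$ genuinely involves the other factors. A second-fundamental-form computation therefore cannot show that ``only the $i$-th factor matters''---that statement is simply not true at the level of the full restricted curvature.

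What saves the theorem, and what is missing from your proposal, is the identification of the \emph{form} of the cross-factor contamination. The paper compares $\check\nabla^i$ not with a quotient connection but with the Tanaka connection $\nabla^i$ of $\alpha_{w,i}$ (whose horizontal curvature computes the Chern--Moser tensor for \emph{any} compatible contact form, CR Reeb field or not), and shows that the two curvatures differ precisely by a term $\omega_\Ll(\check X,\check Y)\,\check A^i(\check Z,\check T)$ with $\check A^i$ a $(0,2)$-tensor, arising from the decomposition of $\omega(X,Y)\Rv_i$ into its $\Rb\g$-part and its $\Ds_j$-parts, $j\neq i$. Since $\cB^i$ projects onto $\omega_\Ll$-primitive K\"ahler curvature tensors, this correction is annihilated, and $\cB^i(R^{\nabla^i})$ is the Chern--Moser tensor of the flat CR sphere, hence zero. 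So your reduction to Webster is the right instinct, but without isolating this correction term and observing that it dies under the Bochner projection, the ``decoupling'' you assert does not hold and the proof is incomplete.
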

We prove this result using the observation (see~\cite{david:weyltanaka}
and~\cite{webster}) that the Chern--Moser tensor of $(N_i, J_i, \Ds_i)$ may
be computed from the horizontal part of the curvature of the {\it Tanaka
  connection} (see e.g.~\cite{Blair}) associated to any contact form $\alpha$
compatible with the (codimension $1$) CR structure $(\Ds,J)$.  The
Chern--Moser tensor does not depend upon the chosen compatible contact
structure (it is a CR invariant). If the Reeb vector field $\Rv$ of $\alpha$
(determined by $\alpha(\Rv)=1$ and $\cL_\Rv\alpha=0$) is a transverse CR
vector field, the horizontal part of the Tanaka connection (i.e., its
restriction on $\Ds$) is the pullback to $N$ of the Levi-Civita connection of
the Levi--K\"ahler quotient $(M,g,J,\omega)$ of $N$ by $\Rv$ using the
identification $\Ds\cong \pi^*TM$.

In our situation, for each $w\in\prod_{j\neq i} N_j$, there is an embedding
$\iota_w\colon N_i\into N$. We shall (slightly abusively) still denote by
$\Ds_i$ the pull-back bundle $\iota_{w}^* \Ds_i$ and by $J_i$ the
corresponding (standard) CR-structure on $(N_i, \Ds_i)$. Recall that $N$ is
endowed with a $1$-form $\eta^{\lamc}$ with $d\eta^{\lamc} = \pi^{*}
\omega_\Ll$, where $\lamc \in \g^*$ is the value defining the Levi--K\"ahler
quoting $\omega_\Ll$. The pull-back $\alpha_{w,i}:= \iota_w^* \eta^{\lamc}$
thus defines a $1$-form on $N_i$, and the next Lemma implies that
$\alpha_{w,i}$ is a contact $1$-form compatible with the CR structure $(N_i,
J_i, \Ds_i)$.

\begin{lemma} \label{lemmaPseudoReeb}
For any $i\in \{1,\ldots \ell\}$ and $z\in N$, the subspace of $E_z^i$ defined
by
\[
\Lb{\g,i}_z :=  r_i ( K_{\eta(E^i_z)})
\]
has dimension $1$ and is transverse to $\Ds_i$. In particular, $\Lb{\g,i}\to
N$ is a real line bundle and there exists a unique vector field $\Rv_i \in
\Gamma(\Lb{\g,i})$ such that $\eta^{\lamc}(\Rv_i)=1$. Furthermore, for each
$w\in\prod_{j\neq i} N_j$, $\alpha_{w,i}$ is a contact form on $N_i$, which
induces $\iota^*_{w,i} \circ \pi^{*}\omega_{\Ll}$ as a transversal symplectic
structure and the vector field $\Rv_{w,i}$ on $(N_i,\Ds_i)$ defined
by $\iota_{w*}\Rv_{w,i}=\Rv_i$ is a Reeb vector field for $\alpha_{w,i}$.
\end{lemma}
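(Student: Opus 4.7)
The strategy is to introduce a canonical pointwise basis $\{\nu_i(z)\}_{i\in\cI}$ of $\g$ at each $z\in N$, where $\nu_i(z)$ spans the line $\eta(E^i_z)\sub\g$; this is one-dimensional because $\eta|_{E^i_z}$ has kernel $\Ds\cap E^i_z=(\Ds_i)_z$, a hyperplane in $E^i_z$. The lemma then reduces to bookkeeping against the two decompositions $TN=\bigoplus_jE^j$ and $TN=\Ds\oplus\Rb\g$, using only two structural facts about $\d\eta^\lamc$: since $\d\eta^\lamc=\pi^*\omega_\Ll$, it annihilates $\Rb\g=\ker\d\pi$; and by \eqref{eq:eta-c}, $\d\eta^\lamc|_\Ds$ is block-diagonal with respect to $\Ds=\bigoplus_j\Ds_j$, with $j$-th block $\chi_j\,\d\d^c\mm^j|_{\Ds_j}$.

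First I would establish that $\{\nu_i(z)\}_{i\in\cI}$ is a basis of $\g$: for any $v\in\g$, the identity $v=\eta(K_v|_z)=\sum_j\eta(r_j(K_v|_z))\in\sum_j\R\,\nu_j(z)$ shows these $\ell$ vectors span $\g$, hence form a basis. Applied to $v=\nu_i(z)$ this forces $\eta(r_j(K_{\nu_i(z)}|_z))=\delta_{ij}\nu_i(z)$, so $r_j(K_{\nu_i(z)}|_z)\in(\Ds_j)_z$ for $j\neq i$ while $r_i(K_{\nu_i(z)}|_z)\in E^i_z\setminus(\Ds_i)_z$; thus $\Lb{\g,i}_z=\R\,r_i(K_{\nu_i(z)}|_z)$ is a line transverse to $\Ds_i$, varying smoothly with $z$. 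Since $r_j(K_{\nu_i(z)}|_z)\in\Ds\sub\ker\eta^\lamc$ for $j\neq i$, one computes $\eta^\lamc(r_i(K_{\nu_i(z)}|_z))=\eta^\lamc(K_{\nu_i(z)}|_z)=\lamc(\nu_i(z))$, which is strictly positive: by \eqref{eq:eta-c}, $\eta^\lamc|_{E^i\cap TN}=\chi_i\,\d^c\mm^i|_{E^i\cap TN}$, where $\chi_i>0$ by Lemma~\ref{l:positivity}, and $\d^c\mm^i$ restricts to a positive multiple of the standard contact form on each sphere $\iota_w(N_i)\cong\Sph^{2m_i+1}$. The rescaling $\Rv_i=r_i(K_{\nu_i(z)}|_z)/\lamc(\nu_i(z))$ then satisfies $\eta^\lamc(\Rv_i)=1$ uniquely.

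For the second half, fix $w\in\prod_{j\neq i}N_j$. The identity $\eta^\lamc|_{E^i\cap TN}=\chi_i\,\d^c\mm^i|_{E^i\cap TN}$ forces $\ker\alpha_{w,i}=\Ds_i$, and $\d\alpha_{w,i}|_{\Ds_i}=\chi_i\,\d\d^c\mm^i|_{\Ds_i}>0$; combined with the tautology $\d\alpha_{w,i}=\iota_w^*\d\eta^\lamc=\iota_w^*\pi^*\omega_\Ll$, this identifies $\alpha_{w,i}$ as a contact form inducing the claimed transversal symplectic structure. Defining $\Rv_{w,i}\in\Gamma(TN_i)$ by $\iota_{w*}\Rv_{w,i}=\Rv_i$, the condition $\alpha_{w,i}(\Rv_{w,i})=\eta^\lamc(\Rv_i)=1$ is immediate. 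The remaining Reeb condition $\d\eta^\lamc(\Rv_i,X)=0$ for $X\in E^i\cap TN$ is automatic when $X\in\Lb{\g,i}$ by antisymmetry; for $X\in\Ds_i$, writing $\Rv_i$ as a positive scalar times $K_{\nu_i(z)}|_z-\sum_{k\neq i}r_k(K_{\nu_i(z)}|_z)$, the first summand pairs trivially because $\d\eta^\lamc$ annihilates $\Rb\g$, and each remaining summand lies in $(\Ds_k)_z$ and pairs trivially with $X\in(\Ds_i)_z$ by block-diagonality (since $k\neq i$).

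The main obstacle is not any single calculation but the conceptual setup: the line $\Lb{\g,i}_z$ is defined through the product splitting $TN=\bigoplus_jE^j$ rather than through the Reeb splitting $TN=\Ds\oplus\Rb\g$, so the two must be compared. The auxiliary basis $\{\nu_i(z)\}_{i\in\cI}$ is the organizing device: once identified, each of the three key claims (dimension, normalization, Reeb property) reduces in one step to the two structural facts about $\d\eta^\lamc$ recalled in the first paragraph.
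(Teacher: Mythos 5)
Your proof is correct and follows essentially the same route as the paper's: you identify $\eta(E^i_z)$ as a line, deduce the splitting $E^i=\Lb{\g,i}\oplus\Ds_i$, and verify the Reeb condition from the two same facts the paper uses (that $\d\eta^\lamc=\pi^*\omega_\Ll$ annihilates $\Rb\g$, and that the $\Ds_j$ are $\d\eta^\lamc$-orthogonal). Your pointwise basis $\nu_i(z)$ is just a repackaging of the paper's observation that $\sum_i K_{\eta(E^i_z)}=\Rb\g_z$, and your explicit check that $\eta^\lamc$ is nonvanishing on $\Lb{\g,i}$ (via $\eta^\lamc|_{E^i}=\chi_i\,\d^c\mm^i|_{E^i}$, $\chi_i>0$) is a welcome small addition the paper leaves implicit — though note the sign of $\lamc(\nu_i)$ depends on the chosen orientation of $\nu_i$, so only its nonvanishing is meaningful.
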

\begin{proof} Since $\eta(E^i_z)= \eta\big((E^i/\Ds_i)_z\big)$, $K_{\eta(E^i_z)}
\sub \Rb\g$ is at most $1$-dimensional. Since $\sum_{i=1}^{\ell}
K_{\eta(E^i_z)}= K_{\eta(T_zN)} =\Rb\g_z$ is $\ell$-dimensional, it follows
that $K_{\eta(E^i_z)}$ is $1$-dimensional and $\dim(\Lb{\g,i}_z) \leq 1.$
As $\Rb\g$ is transversal to $\Ds$ ($(\g, \lamc)$ being Levi pair), each $X
\in E^{i}_z$ decomposes as $X=K_{\eta(X)} + X^{\Ds}$; applying $r_i$ (and
using $r_i(X)=X$, $r_i (\Ds)= \Ds_i$), we obtain $X = r_i(K_{\eta(X)}) +
X^{\Ds_i}$, showing $E^{i} = \Lb{\g,i} \oplus \Ds_i$.

For any $Y, Z \in \Ds_i$, $d\alpha_{w,i}(Y,Z)= - \alpha_{w,i}([Y,Z]) =
-\eta^{\lamc}([Y, Z])= \pi^* \omega_\Ll(Y,Z)$. In order to show that
$\Rv_{w,i}$ is Reeb field for $\alpha_{w,i}$ we need to check that for any $Z
\in \Ds_i$, $d\alpha_{w,i} (\Rv_{w,i}, Z)=d\eta^{\lamc} (\Rv_i, Z)=0$ (where
we have used that $E^i$ is integrable and $\iota_{w}^* (E_i) \cong TN_i$).
Writing $\Rv_i = r_i(K_{\eta(X)})= X - X^{\Ds_i}$ for $X\in E^i$, and
decomposing $X = K_{\eta(X)} + X^{\Ds}$, we have
\begin{equation*}
d\eta^{\lamc}(\Rv_i, Z) = d\eta^{\lamc}(X- X^{\Ds_i}, Z)
=d\eta^{\lamc}(K_{\eta(X)}, Z) + \sum_{j\neq i} d\eta^{\lamc}(X^{\Ds_j}, Z)=0,
\end{equation*}
where (in the last equality) the first term vanishes because $\eta^{\lamc}$
is $\g$-invariant, whereas the second term vanishes because for $j\neq i$,
$\Ds_i$ is $d\eta^{\lamc}$-orthogonal to $\Ds_j$, see \eqref{orthogonal}.
\end{proof}

We define a partial connection $\nabla^i\colon \Gamma(E^i) \times \Gamma(E^i)
\to \Gamma(E^i)$ on the involutive subbundle $E^i \sub TN$, which pulls
back by $\iota_{w}$ to the Tanaka connection of $\alpha_{w,i}$, as follows:
\begin{itemize}
\item $\nabla^i$ preserves $\Ds_i$;
\item $\Rv_i, J_i$ and $\omega$ are $\nabla^i$ parallel;
\item For any $X, Y \in \Gamma(\Ds_i)$, the torsion $T^{\nabla^i}$ satisfies
  $T^{\nabla^i}(X,Y) =\omega(X,Y) \Rv_i$ and $T^{\nabla^i}(\Rv_i, J_i X)=-J_i
  T^{\nabla^i}(\Rv_i, X).$
\end{itemize}
In particular, $\nabla^i$ satisfies 
\begin{equation*}
\begin{split}
g(\nabla^i_{X} Y, Z) &=\check{g}(\check{\nabla}_{\check{X}} \check{Y}, \check{Z})
= \check{g}(\check{\nabla}^{i}_{\check{X}} \check{Y}, \check{Z}) 
\end{split}
\end{equation*}
We next show that $\nabla^i$ can be extended to a full connection $\nabla$ on
$TN$ preserving $\Ds$, and such that:
\begin{itemize}
 \item $\nabla\restr{\Ds} =\pi^*\check{\nabla}$;
 \item $\iota_w^*(r_i\circ\nabla) = \nabla^i$ is the Tanaka connection on
   $(N_i, \alpha_i, \Ds_i, J_i)$.
\end{itemize}
The first condition tells us that the torsion of $X,Y\in \Gamma(\Ds)$ is the
vertical part of $-[X,Y]$, that is
\begin{equation}\label{torsionN}
  T^{\nabla}(X,Y) = -K_{\eta([X,Y])}.
\end{equation}
Hence for $X,Y \in \Gamma(\Ds_i)$, $r_i(T^{\nabla}(X,Y)) =
-r_i(K_{\eta([X,Y])})= \omega(X,Y)\Rv_i= T^{\nabla^i}(X,Y)$.

Let $\Lb{\g}:=\bigoplus_{i=1}^\ell\Lb{\g,i}$ be the rank $\ell$ subbundle of
$TN$ over $N$ which is everywhere transverse to $\Ds$. We extend the
endomorphism $J$ of $\Ds$ by zero on $\Lb{\g}$, and define a linear connection
$\nabla$ on
\[
TN =\bigoplus_{i\in\cI} E^i =\bigoplus_{i\in\cI}(\Ds_i\oplus\Lb{\g,i})
=\Ds\oplus\Lb\g
\]
such that
\begin{itemize}
 \item[(i)] $\nabla$ agrees with the pullback connection $\pi^*\nabla^g$ on
   $\Ds\cong\pi^* TM$,
 \item[(ii)] $\nabla\Rv_i =0$,
 \item[(iii)] $\nabla_sX = [s,X] +\nabla_Xs-\frac{1}{2} J(\cL_sJ)X$
for each section $s\in\Gamma(\Lb{\g})$ and $X\in\Gamma(\Ds)$.
\end{itemize} 
\begin{lemma} Let $\nabla$ be a connection on $N$ satisfying the conditions
{\rm (i)--(iii)} above. Then, $\iota_w^*(r_i\circ\nabla)$ is the Tanaka
connection $\nabla^i$ on $(N_i, \alpha_i, \Ds_i, J_i)$.
\end{lemma}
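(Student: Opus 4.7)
The plan is to verify that $\iota_w^*(r_i\circ\nabla)$ satisfies the defining properties of the Tanaka connection $\nabla^i$ associated with the contact form $\alpha_{w,i}$ on $(N_i,\Ds_i,J_i)$, and then to invoke the uniqueness of the Tanaka connection. Those defining properties are: preservation of $\Ds_i$; parallelism of $\Rv_i$, $J_i$, and $\omega$; and the torsion identities $T(X,Y)=\omega(X,Y)\Rv_i$ for $X,Y\in\Gamma(\Ds_i)$ and $T(\Rv_i,J_iX)=-J_iT(\Rv_i,X)$.

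First I would establish preservation of $\Ds_i$. Any $Y\in\Gamma(E^i)$ splits as $Y=Y^{\Ds_i}+f\Rv_i$. By (i), $\nabla_{Y^{\Ds_i}}X\in\Gamma(\Ds)$ for $X\in\Gamma(\Ds)$, and its $r_i$-projection lies in $\Ds_i$. For the $\Rv_i$ direction, (ii) and (iii) yield $\nabla_{\Rv_i}X=[\Rv_i,X]-\tfrac12 J(\cL_{\Rv_i}J)X$; after restriction to $\iota_w(N_i)$, the bracket term lies in $\Ds_i$ because $\Rv_i|_{N_i}$ is a CR vector field for $(N_i,\Ds_i,J_i)$ (being generated by the standard torus action on $\C^{m_i+1}$, which preserves the CR structure of $\Sph^{2m_i+1}$), while the second term is already in $\Ds$ by the definition of $J$; projecting by $r_i$ therefore lands in $\Ds_i$. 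Parallelism of $\Rv_i$ is immediate from (ii). Parallelism of $J$ on $\Ds$ under $\nabla$ reduces on the $\Ds$-direction to the K\"ahler condition $\nabla^g J=0$ via (i), while for $s\in\Gamma(\Lb\g)$ and $X\in\Gamma(\Ds)$ a direct expansion of $(\nabla_sJ)X=\nabla_s(JX)-J\nabla_sX$ using (iii) and $\nabla_X s=0$ yields
\begin{equation*}
(\nabla_sJ)X=(\cL_sJ)X-\tfrac12 J(\cL_sJ)(JX)-\tfrac12(\cL_sJ)X,
\end{equation*}
which vanishes by the standard identity $(\cL_sJ)J=-J(\cL_sJ)$ coming from $J^2=-\mathrm{Id}$. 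Parallelism of $\omega$ then follows from parallelism of $J$ and of the metric (which is automatic on $\Ds$ by (i)).

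The torsion identities come next. For $X,Y\in\Gamma(\Ds_i)\subset\Gamma(E^i)$, integrability of $E^i$ gives $[X,Y]\in\Gamma(E^i)$, and equation~\eqref{torsionN} yields $T^\nabla(X,Y)=-K_{\eta([X,Y])}\in\Rb\g\cap E^i=\Lb{\g,i}$. Hence $r_iT^\nabla(X,Y)=T^\nabla(X,Y)$ is a scalar multiple of $\Rv_i$, and this scalar is pinned down by pairing with $\eta^\lamc$: using $\eta^\lamc([X,Y])=-\d\eta^\lamc(X,Y)=-\omega(X,Y)$ together with the normalization $\eta^\lamc(\Rv_i)=1$, one obtains $r_iT^\nabla(X,Y)=\omega(X,Y)\Rv_i$. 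For the remaining identity, (ii) and (iii) give $T^\nabla(\Rv_i,X)=-\tfrac12 J(\cL_{\Rv_i}J)X$, and one further application of $(\cL_{\Rv_i}J)J=-J(\cL_{\Rv_i}J)$ produces $T^\nabla(\Rv_i,J_iX)=-J_iT^\nabla(\Rv_i,X)$.

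The principal obstacle I anticipate is the CR-invariance claim used in Part 1, namely that $\Rv_i|_{N_i}$ is genuinely a CR vector field on $(N_i,\Ds_i,J_i)$: this needs the observation that, as a section of $\Lb{\g,i}\subset E^i$, the vector field $\Rv_i$ restricts along $\iota_w(N_i)$ to an element of the Lie algebra of the CR-automorphism group of $\Sph^{2m_i+1}$, so $\cL_{\Rv_i}$ preserves both $\Ds_i$ and $J_i$ on $N_i$. Once this is confirmed, all the remaining checks are algebraic manipulations of (i)--(iii) and the identity $(\cL_sJ)J=-J(\cL_sJ)$, and the uniqueness of the Tanaka connection closes the argument.
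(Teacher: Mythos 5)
Your overall strategy---verifying the defining properties of the Tanaka connection for $\iota_w^*(r_i\circ\nabla)$ and invoking uniqueness---is the same as the paper's, and most of the algebra (the $J$-parallelism in the $\Rv_i$-direction via condition (iii) together with the anticommutation of $\cL_sJ$ with $J$, and the second torsion identity) matches the paper's computation. However, the step you yourself single out as the linchpin is resolved incorrectly: $\Rv_i$ does \emph{not} restrict along $\iota_w(N_i)$ to an infinitesimal CR automorphism of $\Sph^{2m_i+1}$. By construction $\Lb{\g,i}_z=r_i(K_{\eta(E^i_z)})$, so $\Rv_{w,i}$ is pointwise proportional to a torus-generated vector tangent to the $i$-th factor, but the relevant element of the torus Lie algebra varies from point to point: it is a combination $\sum_r c_r(z)\,\partial/\partial\ang_{ir}$ with non-constant coefficients, which need not preserve $J_i$. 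Indeed, if $\Rv_{w,i}$ were a CR field one would have $\cL_{\Rv_i}J=0$, the torsion term $-\tfrac12 J(\cL_{\Rv_i}J)$ in the Tanaka connection would be trivial and each factor would carry a Sasaki structure---false for generic Levi pairs $(\g,\lamc)$, and precisely what makes these quotients nontrivial. What is true, and all you need, is the Reeb property established in Lemma~\ref{lemmaPseudoReeb}: $\cL_{\Rv_{w,i}}\alpha_{w,i}=0$, so $\alpha_{w,i}([\Rv_{w,i},X])=0$ for $X\in\Gamma(\Ds_i)$ and the $r_i$-projection of $[\Rv_i,X]$ lies in $\Ds_i$. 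With that substitution your Part 1 goes through.

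A smaller slip occurs in the first torsion identity: $T^\nabla(X,Y)=-K_{\eta([X,Y])}$ lies in $\Rb\g$ but not, in general, in $E^i$, and $\Rb\g\cap E^i$ is not $\Lb{\g,i}$ (the latter is the $r_i$-\emph{projection} of $K_{\eta(E^i)}$, not an intersection), so the assertion $r_iT^\nabla(X,Y)=T^\nabla(X,Y)$ fails. What holds---and is recorded in the paper just before the lemma---is that $r_i(K_{\eta([X,Y])})\in\Lb{\g,i}$ because $[X,Y]\in\Gamma(E^i)$; your pairing argument then applies to the projected vector, since $\eta^\lamc\bigl(r_i(K_{\eta([X,Y])})\bigr)=\eta^\lamc([X,Y])=-\omega(X,Y)$ and $\eta^\lamc(\Rv_i)=1$, giving the correct $r_iT^\nabla(X,Y)=\omega(X,Y)\Rv_i$. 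Finally, note that $\nabla_Xs=0$ is only guaranteed for $s=\Rv_i$ (condition (ii)); for general $s\in\Gamma(\Lb\g)$ one only has $J\nabla_Xs=0$ together with the decomposition $[s,X]=[s,X]^{\Ds}-\nabla_Xs$, which is how the paper organizes the computation---harmless here, since the Tanaka axioms only require the case $s=\Rv_i$.
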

\begin{proof} We first show that $\nabla_s JX = J\nabla_sX$ for
any $X\in \Gamma(TN)$ and $s\in\Gamma(\Lb{\g})$. It is clear that $\nabla_s Jt
= J\nabla_st=0$ for $s, t\in\Gamma(\Lb{\g})$ and when $X\in \Gamma(\Ds)$, we
have $J\nabla_Xs=0$ as well as the decomposition
\[
[s,X]= [s,X]^{\Ds} -\nabla_X s
\]
with respect to the splitting $TN=\Ds \oplus\Lb\g$. Using this two facts and
condition (iii), a straightforward calculation shows $\nabla_s JX =
J\nabla_sX$. Together with this last identity, conditions (i)--(ii) ensure
that $\iota_w^*(r_i\circ \nabla)$ satisfies the first two properties of the
Tanaka connection of $(N_i, \Ds_i, J_i, \alpha_i)$.  It remains to check the
torsion property. From equation~\eqref{torsionN} it follows that
$\iota_w^*(r_i\circ \nabla)$ has the same torsion on $\Ds_i$. Moreover, using
again condition (iii) we get
\[
T^{\nabla}(s,JX)=-\tfrac{1}{2} J(\cL_sJ)JX
= \tfrac{1}{2} J^2(\cL_sJ)X=-JT^{\nabla}(s,X)
\]
for any $X\in \Gamma(TN)$ and $s\in\Gamma(\Lb{\g})$.
\end{proof}

\begin{proof}[Proof of Proposition~\textup{\ref{BochnerFlatOnFactor}}]
To compare the curvatures $\nabla^i$ and $\check{\nabla}^{i}$, we notice that
\begin{equation*}
\begin{split}
g(\nabla^i_{[X,Y]}Z, T) &= g(\nabla^i_{[X,Y]^{\Ds_i}}Z,T)
- \omega_{\Ll}(\check{X},\check{Y}) g(\nabla^i_{\Rv_i}Z, T)\\
&= g(\nabla_{[X,Y]^{\Ds}}Z, T)
- \omega_{\Ll}(\check{X}, \check{Y}) g(\nabla_{\Rv_i^{\mathcal{\Rb}}}Z, T)\\
&= \check{g}(\check{\nabla}^i_{[\check{X},\check{Y}]} \check{Z}, \check{T})
+ \omega_{\Ll}(\check{X}, \check{Y}) \check{A}^i(\check{Z}, \check{T}),
\end{split}
\end{equation*}
where to go from first line to the third we have decomposed $\omega(X,Y)\Rv_i=
\omega(X,Y)\Rv_i^{{\Rb\g}} - \sum_{j \neq i} [X,Y]^{\Ds_j}$ (with
$\Rv_i^{\Rb\g}$ being the projection of $\Rv_i$ to $\Rb\g$), and we view
$\check{A}^i(\check{Z}, \check{T}) = -g(\nabla_{\Rv_i^{\Rb\g}}Z, T)$ as a
$(0,2)$-tensor on $M$, since pullbacks to $N$ of smooth functions on $M$ are
$\Rb\g$-invariant.  It then follows that
\begin{equation*}
g(R^{\nabla^i}_{X,Y}Z,T)
= \check{g}(R^{\check{\nabla}^i}_{\check{X}, \check{Y}} \check{Z}, \check{T})
+ \omega_{\Ll}(\check{X}, \check{Y}) \check{A}^i(\check{Z}, \check{T}).
\end{equation*}                                             
Applying the projection $\mathcal{B}^{i}$ to the both sides, and using that
$\mathcal{B}^{i}(R^{\nabla^i})$ equals the Chern--Moser tensor of $N_i$ (which
is zero) as well as $\mathcal{B}^i(\omega_{\Ll} \otimes A^{i})=0$ (as
${\mathcal B}^{i}$ projects onto the space of $\omega_{\Ll}$-primitive
K\"ahler curvature tensors), we obtain $B^i=
\mathcal{B}^{i}(R^{\check{\nabla}^i})=0$.
\end{proof}

\subsection{Curvature for Levi--K\"ahler quotients of products of $3$-spheres}

We next use the explicit form~\eqref{eq:gen-s3s} of the K\"ahler metric
$h_{\Ll}$ in order to compute the its scalar curvature. Up to a factor of
$-1/2$, a Ricci potential is given by the log ratio of the symplectic volume
form to a holomorphic volume form. Using $\d\bt^{(1,0)}$ to compute the
latter, we readily obtain
\begin{equation*}
\sum_{i\in\cI} \log A_i(\xi_i) + \sum_{i\in\cI} \log N_{i\infty}(\mu^\lamc)
- 2\log \bigwedge_{i\in\cI} (\xi_i \d N_{i\infty} + \d N_{i0}).
\end{equation*}
The derivatives of the first two terms are straightforward to compute, using
that $\d (N_{j\infty}(\mu^\lamc))=-\sum_{i\in\cI} N_{i\infty}(\mu^\lamc)
\ip{\Qt_i,\d N_{j\infty}} \d\xi_i$. For the third, observe by Cramer's rule
that its exterior derivative is $\sum_{i\in\cI} 2a_{ii}\, \d\xi_i$ where the
coefficients $a_{ij}$ solve the linear system $\sum_{j\in\cI} (\xi_j \d
N_{j\infty} + \d N_{j0})a_{ij} = \d N_{i\infty}$, i.e., $a_{ij} =\ip{\Qt_j,\d
  N_{i\infty}}$. Thus
\begin{align} \label{ricci} 
\rho_\Ll=\frac12\bigl(\d\sum_{i\in\cI}
\alpha_i\theta_i\bigr),
\end{align}
 where
\begin{equation*}
\alpha_i= A_i'(\xi_i)
-\sum_{j\in\cI}\frac{\ip{\Qt_i,\d N_{j\infty}} N_{i\infty}(\mu^\lamc)}
{N_{j\infty}(\mu^\lamc)}A_i(\xi_i) - 2\ip{\Qt_i,\d N_{i\infty}}A_i(\xi_i)
\end{equation*}
and $\theta_i=\ip{\Qt_i,\d\bt}$. We are interested in the scalar curvature
only, defined by
\begin{align*}
s_{\Ll} = \frac{2m\rho_{\Ll}\wedge \omega^{m-1}}{\omega^m},\quad
\frac{\omega^{m-1}}{\omega^m} = - \frac 1 m
\sum_{j\in\cI}\frac{\prod_{k\neq j} \d\xi_k\wedge \theta_k}{N_{j\infty}(\mu^\lamc)\,
\d\xi_1\wedge\theta_1\wedge\cdots\wedge\d\xi_m\wedge\theta_m}.
\end{align*}
Straightforward computation using \eqref{ricci} then yields
\begin{equation}
\begin{split}
s_{\Ll} &=-\sum_{i\in\cI} \frac{A_i''(\xi_i)}{N_{i\infty}(\mu^\lamc)}
+\sum_{i\in\cI} 2A_i'(\xi_i)\biggl(\Qt_{ii} + \sum_{j\in\cI}\Qt_{ij}\biggr)\\
&\quad-\sum_{i\in\cI} A_i(\xi_i)N_{i\infty}(\mu^\lamc)
\biggl(2\frac{\ip{\Qt_i,\d N_{j\infty}}^2}
{N_{i\infty}(\mu^\lamc)^2} +\sum_{j\in\cI} (4\Qt_{ii}\Qt_{ij}-\Qt_{ij}^2)
+\sum_{j,k} \Qt_{ij}\Qt_{ik}\biggr).
\end{split}
\end{equation}
where $\Qt_{ij} = \ip{\Qt_i,\d N_{j\infty}} / N_{j\infty}(\mu^\lamc)$.

We now specialize to the case of projective cubes as in
Section~\ref{s:pcubes}, where $N_{i\infty}(\mu^\lamc)=\mu_0=1/(b_0+b_1\xi_1+\cdots
b_m\xi_m)$, independent of $1\leq i\leq m$. Using
\[
\d\mu_0 = \ip{\d N_{j\infty},\d\mu}= -\sum_{i=1}^m \mu_0 \d\xi_i 
\ip{\Qt_i,\d N_{j\infty}},
\]
we obtain immediately that $\Qt_{ij} = b_i$. The Ricci potential specializes
to give
\begin{equation}\label{rp}
\mu_0 ^{m+2} \prod_{i=1}^{m} A_i(\xi_i),
\end{equation}
as may be verified directly using~\eqref{theta} and~\eqref{metric}, while the
scalar curvature reduces to
\begin{equation}\label{s}
s_{\Ll} =-\sum_{i\in\cI} \frac{A_i''(\xi_i)}{\mu_0} +\sum_{i\in\cI} 2(m+1)
b_i A_i'(\xi_i)-\sum_{i\in\cI} (m+1)(m+2)\mu_0 b_i^2 A_i(\xi_i).
\end{equation}

\subsection{Projective cubes and $(\afn,p)$-extremality}\label{a:EM}

A \emph{labelled cuboid} $(\Delta,\Ll)$ is a labelled Delzant polytope which
has the combinatorics of an $m$-cube. By Theorem~\ref{t:product-spheres}, any
labelled cuboid $(\Delta,\Ll)$ admits a compatible toric metric $h_\Ll$, which
is Levi--K\"ahler quotient metric of an $m$-fold product of $3$-spheres. We
consider here the case that the cuboid $\Delta$ is a projective cube, i.e.,
the intersections of pairs of opposite facets lie in a hyperplane.  For any
labelled projective cube $(\Delta,\Ll)$, $h_\Ll$ is given by \eqref{metric},
and we provide here a characterization of this toric metric in terms of the
toric geometry of $(\Delta, \Ll)$, as developed in \cite{Abreu1,Guillemin,do}.

The starting point of our approach is based on a recent observation in
\cite{AM}, which in turn extends the formal GIT framework of
\cite{donaldson,fujiki} (realizing the scalar curvature of a K\"ahler metric
as a momentum map under the action of the group of hamiltonian
transformations) to a larger family of related GIT problems. Let $(M,\omega)$
be a compact symplectic manifold (or orbifold) and ${\rm Ham}(M,\omega)$ the
group of hamiltonian transformations. Fix a torus $\T \leq {\rm
  Ham}(M,\omega)$ and a positive hamiltonian $\afn>0$ with $\grad_{\omega}
\afn \in \mathfrak{t}:={\rm Lie}(\T)$. Let ${\mathcal C}^{\T}(M,\omega)$ be
the space of $\T$-invariant, $\omega$-compatible complex structures on
$(M,\omega)$ and ${\rm Ham}^{\T}(M, \omega)$ the subgroup of $\T$-equivariant
hamiltonian transformation, acting naturally on ${\mathcal
  C}^{\T}(M,\omega)$. The Lie algebra of ${\rm Ham}^{\T}(M, \omega)$ is
identified with the space $C_0^\infty(M)^{\T}$ of smooth, $\T$-invariant
functions of integral zero, endowed with the ${\rm Ham}^{\T}(M, \omega)$
bi-invariant inner product
\begin{equation}\label{inner}
\langle h_1, h_2 \rangle_{\afn,p} := \int_M h_1 h_2 \afn^{-(p+1)} v_{\omega}, 
\end{equation}
where $p$ is a real constant (which we call the \emph{conformal dimension})
and $v_{\omega}= \omega^m/m!$ is the volume form of $\omega$. The space
$\mathcal{C}^{\T}(M,{\omega})$ carries a formal Fr\'echet K\"ahler structure,
$({\bf J}, {\bf \Omega}^{\afn,p})$, defined by
\begin{equation*}
{\bf J}_{J} (\dot J) = J \dot J, \qquad 
{\bf \Omega}^{\afn,p}_{J}(\dot{J}_1, \dot{J}_2)
= \frac{1}{2} \int_M {\rm tr} \Big(J \dot{J}_1 \dot{J}_2\Big) w^{-(p-1)} v_{\omega},
\end{equation*}
where the tangent space of $\mathcal{C}^{\T}(M, \omega)$ at $J$ is identified
to be the Fr\'echet space of smooth sections $\dot J$ of ${\rm End}(TM)$
satisfying
\begin{equation*}
\dot{J} J + J \dot{J} =0,  \qquad
\omega(\dot{J} \cdot , \cdot) + \omega(\cdot, \dot{J} \cdot) =0.
\end{equation*}
The formal complex structure ${\bf J}$ is the same as the one in
\cite{donaldson,fujiki} whereas the modified formal symplectic form ${\bf
  \Omega}^{\afn,p}$ stays closed (as can easily be checked).

In the following, we denote by $g_J$ the K\"ahler metric corresponding to $J
\in \mathcal{C}^{\T}(M, \omega)$ and by $s_J$ and $\Delta_J$ the corresponding
scalar curvature and Laplace operator. We then have a straightforward (mutatis
mutandis) generalization of \cite[Thm.~1]{AM} (which corresponds to $p=2m$).

\begin{lemma}\label{l:GIT} The action of ${\rm Ham}^{\T}(M,\omega)$
on $(\mathcal{C}^{\T}(M, {\omega}), {\bf J}, {\bf \Omega}^{\afn,p})$ is
hamiltonian with a momentum map $\mu \colon \mathcal{C}^{\T}(M, {\omega}) \to
\bigl(C_0^\infty(M)^{\T}\bigr)^*$, whose value at $J$ is identified with the
$\langle \cdot, \cdot \rangle_{\afn,p}$-dual of
\begin{equation}\label{p-scal}
s_{J,\afn,p}:= \afn^2s_J - 2(p-1)\afn\Delta_J\afn - p(p-1)g_J^{-1}(\d\afn,\d\afn).
\end{equation}
\end{lemma}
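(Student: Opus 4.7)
The plan is to mimic the Donaldson--Fujiki computation for the classical Mabuchi--Semmes--Donaldson momentum map picture (which corresponds to the unweighted case $\afn\equiv 1$, with $s_J$ as the momentum map), tracking carefully the effect of the weight $\afn$ through integration by parts. The statement for $p=2m$ is already established in \cite[Thm.~1]{AM}, so the task is to adapt that computation with the weights $\afn^{-(p-1)}$ on the symplectic form and $\afn^{-(p+1)}$ on the inner product.

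First I would verify the formal set-up. Since $\afn$ is $\T$-invariant and any $h\in C^\infty_0(M)^\T$ generates a $\T$-equivariant hamiltonian vector field $X_h$, the flow of $X_h$ preserves $\afn$, hence preserves the weighted symplectic form ${\bf \Omega}^{\afn,p}$ and the weighted inner product $\langle\cdot,\cdot\rangle_{\afn,p}$. Consequently the action of $\textup{Ham}^\T(M,\omega)$ on $\mathcal{C}^\T(M,\omega)$ is by symplectomorphisms of $(\mathcal{C}^\T(M,\omega),{\bf\Omega}^{\afn,p})$, with infinitesimal action at $J$ given by $\dot J_h=\cL_{X_h} J$; one checks this is a tangent vector, i.e., anti-commutes with $J$ and lies in the compatibility kernel with $\omega$.

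Second, I would compute the contraction $\iota_{\dot J_h}{\bf\Omega}^{\afn,p}_J$, paired with a generic tangent vector $\dot J$, namely $\tfrac12\int_M\trace(J\dot J_h\dot J)\,\afn^{-(p-1)}v_\omega$, and integrate by parts to move the Lie derivative $\cL_{X_h}$ off $J$ onto the pair $(\afn^{-(p-1)},\dot J)$. When $\afn\equiv 1$, the Donaldson--Fujiki identity gives $\int_M h\,s_J'(\dot J)v_\omega$, exhibiting the unweighted $s_J$ as the hamiltonian of $h$. With the weight, every integration by parts of $X_h$ across $\afn^{-(p-1)}\,v_\omega$ produces a boundary-free extra term involving $X_h(\afn)=0$ (because $\afn$ is $\T$-invariant) or, after the derivatives fall on $\afn$ via curvature-like expressions, a contribution proportional to $\afn^{-p}\,d\afn$ and $\afn^{-p+1}\,\Hess\afn$. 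Reorganizing, the hamiltonian function of $h$ with respect to $\langle\cdot,\cdot\rangle_{\afn,p}$ picks up precisely the combination
\[
\afn^2 s_J-2(p-1)\afn\Delta_J\afn-p(p-1)\,g_J^{-1}(\d\afn,\d\afn)=s_{J,\afn,p}.
\]
The coefficients $2(p-1)$ and $p(p-1)$ are dictated by the Leibniz expansion of $\nabla^2(\afn^{-(p-1)})$ and the Ricci--type contractions appearing when one formally rewrites $s_J$ as a double divergence.

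Third, I would confirm the momentum map property $d\mu(\dot J)(h)=-{\bf\Omega}^{\afn,p}_J(\dot J_h,\dot J)$ by checking equivariance (automatic from the $\textup{Ham}^\T$-invariance of $\afn$ and $\omega$) and comparing the variation $\dot s_{J,\afn,p}$ of \eqref{p-scal} with the symplectic pairing computed above; the algebraic identities required are the same as those used in the proof of \cite[Thm.~1]{AM}, applied verbatim after replacing the exponent $2m$ by the free parameter $p$. The main obstacle is thus not conceptual but rather the careful bookkeeping of the weight in the integration-by-parts: one has to verify that the two families of boundary terms (derivatives of $\afn^{-(p-1)}$ from $\bf\Omega^{\afn,p}$ and derivatives of $\afn^{-(p+1)}$ from $\langle\cdot,\cdot\rangle_{\afn,p}$) combine so that $\dot J_h\mapsto s_{J,\afn,p}$ with the exact coefficients in \eqref{p-scal}. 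Since the derivation in \cite{AM} is local in $p$ (the exponent plays no structural role beyond counting derivatives of $\afn$), this reduces to a polynomial identity in $p$ which is already verified at $p=2m$, so it is enough to note that the formulae extend analytically in $p$.
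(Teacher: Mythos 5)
Your route is the same as the paper's: the paper offers no independent computation for general $p$, but simply invokes \cite[Thm.~1]{AM} (the case $p=2m$) and observes that the derivation goes through ``mutatis mutandis'' because the exponent enters only through the fixed weights $\afn^{-(p-1)}$ and $\afn^{-(p+1)}$; your first two paragraphs and the equivariance check (including the correct observation that $X_h(\afn)=\{\afn,h\}=0$ since $\grad_\omega\afn\in\tor$ and $h$ is $\T$-invariant) reproduce exactly that adaptation of the Donaldson--Fujiki/\cite{AM} integration-by-parts argument.

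The one point you should repair is the final shortcut: asserting that the required identity ``reduces to a polynomial identity in $p$ which is already verified at $p=2m$, so it is enough to note that the formulae extend analytically in $p$.'' For a fixed $(M,\omega)$ the quantity $s_{J,\afn,p}$ is quadratic in $p$, and a polynomial identity in $p$ is not determined by its value at the single point $p=2m$; verification at one exponent therefore does not establish the coefficients $2(p-1)$ and $p(p-1)$ for all $p$. The legitimate argument is the one you state just before that sentence: the computation in \cite{AM} never uses the numerical value $2m$ except as the exponent in the weights, so one reruns it verbatim with $p$ as a free parameter, and the Leibniz expansion of the derivatives of $\afn^{-(p-1)}$ and $\afn^{-(p+1)}$ then \emph{produces} (rather than merely being ``dictated by'') the stated coefficients. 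In other words, drop the analytic-continuation claim and carry the weight through the integration by parts with general $p$; that is precisely what the paper means by its mutatis mutandis appeal to \cite[Thm.~1]{AM}.
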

Note that $s_{J,\afn,p}$ is the trace of the conformal modification
\begin{equation}\label{p-ricci}
\rho_{J,\afn,p}:= \afn^2 \rho_J + (p-1)\afn \d\d^c\afn - \tfrac12
p(p-1)\d\afn\wedge \d^c\afn
\end{equation}
of the Ricci form.
\begin{rem}\label{AK-futaki} As in~\cite{AM}, one can extend the definition
of $({\bf J}, {\bf \Omega}^{\afn,p})$ on $\mathcal{C}^{\T}(M, \omega)$ to the
larger Frech\'et space $\mathcal{AK}^{\T}(M,\omega)$ of $\T$-invariant
$\omega$-compatible almost-K\"ahler structures $J$. Then, using the formulae
in \cite[Ch.~8]{gauduchon-book} (see also \cite[Lemma~2.1 \&
  Prop.~3.1]{lejmi}), the momentum map for the action of ${\rm
  Ham}^{\T}(M,\omega)$ on $\mathcal{AK}^{\T}(M, {\omega})$ is still given by
Lemma~\ref{l:GIT}, except that in~\eqref{p-scal} we must take $s_{J}$ to be
the {\it Hermitian} scalar curvature of $g_J$ (the trace of the Ricci form of
the canonical Hermitian connection, see e.g.~\cite{lejmi}).

Using the contractibility of $\mathcal{AK}_{\omega}^G$, we obtain, as
in~\cite{AM}, a generalized \emph{Futaki invariant}
$\mathfrak{F}^{\T}_{\omega, \afn,p}\colon \mathfrak{t} \to \R$ of $(M,
\omega,\T,\afn,p)$: for any vector field $H \in \mathfrak{t}$ with a
hamiltonian $h$,
\begin{equation}\label{F}
\mathfrak{F}^{\T}_{\omega, \afn,p} (H) :=
\int_{M} \mathring s_{J,\afn,p} h \afn^{-(p+1)} v_{\omega}
\end{equation}
is independent of the choice of $J \in \mathcal{AK}^G_{\omega}$, where
$\mathring s_{J,\afn,p}$ is the $L^2$-projection of $s_{J,\afn,p}$ onto
functions with integral zero with respect to the volume form
$\afn^{-(p+1)} v_{\omega}$.
\end{rem}

Specializing to the case of a toric manifold (or orbifold) $(M,\omega,\T)$,
the above formalism allows one to extend the theory of extremal toric metrics
from~\cite{do} to the $(\afn,p)$-extremal toric case (the case $p=2m$ is
developed in detail in~\cite{AM}). In particular, we have the following
result:
\begin{prop}\label{t:(\afn,p)-extremal} Let $(M,\omega, \T)$ be a compact
toric orbifold with labelled Delzant polytope $(\Delta,\Ll)$ in $\R^m$ and
$\afn$ a positive affine-linear function on $\Delta$. Then,
\begin{enumerate}
\item[\rm (a)] There exists at most one (up to equivariant isometry)
  compatible toric metric $g_J$ on $(M,\omega,\T)$, for which $s_{J,\afn,p}$
  is an affine-linear function.
\item[\rm (b)] The affine-linear function in {\rm (a)} is uniquely determined
  by $(\Delta,\Ll,\afn,p)$.
\end{enumerate}
\end{prop}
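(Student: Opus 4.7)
The plan is to establish (b) first, as it identifies the candidate affine function $\ell_{\afn,p}$ explicitly from polytope data, and then to deduce (a) via strict convexity of a weighted Mabuchi-type functional on the space of symplectic potentials.

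For (b), the key tool is a weighted analogue of the Abreu--Donaldson integration-by-parts formula (cf.~\cite{Abreu1,do}, and~\cite{AM} for the case $p=2m$). In the Abreu--Guillemin description, a $\T$-invariant $\omega$-compatible K\"ahler metric on $(M,\omega)$ corresponds (up to equivariant symplectomorphism) to a symplectic potential $u$ on $\mathring{\Delta}$ satisfying the boundary conditions \eqref{abreu} induced by $\Ll$, and Abreu's formula gives $s_J=-u^{ij}_{,ij}$ where $u^{ij}$ is the inverse Hessian. Combined with the expression~\eqref{p-scal} for $s_{J,\afn,p}$, integration by parts yields
\begin{equation*}
\int_M s_{J,\afn,p}\, f\, \afn^{-(p+1)}\, v_\omega
= \cB_{\Delta,\Ll,\afn,p}(f) - 2\int_\Delta u^{ij} F_{ij}(f,\afn,p)\, dx,
\end{equation*}
where $\cB_{\Delta,\Ll,\afn,p}(f)$ is a boundary functional depending only on $(\Delta,\Ll,\afn,p)$, and $F_{ij}(f,\afn,p)$ is a second-order linear differential expression in $f$ with coefficients that are rational in $\afn$ and its derivatives. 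When $f$ is affine and $\afn$ is affine on $\Delta$, $F_{ij}$ involves no derivatives of $u$, and a further integration by parts absorbs the remaining interior integral into a modified boundary functional $\widetilde{\cB}_{\Delta,\Ll,\afn,p}(f)$ still depending only on $(\Delta,\Ll,\afn,p)$. Under the hypothesis $s_{J,\afn,p}=\ell_{\afn,p}$ we obtain the linear system
\begin{equation*}
\ip{\ell_{\afn,p}, f}_{\afn,p} = \widetilde{\cB}_{\Delta,\Ll,\afn,p}(f),\qquad \forall\, f\in\mathrm{Aff}(\Delta),
\end{equation*}
where the weighted pairing on the left is positive definite on the finite-dimensional space $\mathrm{Aff}(\Delta)$ of affine functions on $\Delta$. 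This determines $\ell_{\afn,p}$ uniquely, proving (b).

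For (a), fix the affine function $\ell_{\afn,p}$ produced by (b) and introduce on the space $\cS(\Delta,\Ll)$ of symplectic potentials the weighted Mabuchi-type functional
\begin{equation*}
\cM_{\afn,p}(u) := -\int_\Delta \log\det(\Hess u)\, \afn^{-(p+1)}\, dx + \widetilde{\cB}_{\Delta,\Ll,\afn,p}(u) - \ip{\ell_{\afn,p}, u}_{\afn,p},
\end{equation*}
where the last two summands are the natural extensions of the functionals in (b) to arbitrary $u\in\cS(\Delta,\Ll)$. A direct first-variation computation, using the same weighted integration by parts that underlies (b), shows that the Euler--Lagrange equation of $\cM_{\afn,p}$ is precisely $s_{J_u,\afn,p}=\ell_{\afn,p}$; this extends the derivations in~\cite{do} and~\cite{AM}. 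Since $-\log\det$ is strictly convex on the cone of positive-definite symmetric matrices and the weight $\afn^{-(p+1)}$ is strictly positive on $\overline{\Delta}$, the functional $\cM_{\afn,p}$ is strictly convex along any linear path $u_t=(1-t)u_0+tu_1$ in $\cS(\Delta,\Ll)$ whose endpoints differ by a non-affine function. Hence two critical points $u_0,u_1$ of $\cM_{\afn,p}$ must differ by an affine function of $x\in\Delta$, which corresponds to a $\T$-equivariant isometry between the two K\"ahler structures on $(M,\omega)$, giving (a).

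The main obstacle is the clean derivation of the weighted Abreu--Donaldson formula, and in particular the verification that the interior term reduces, for affine $f$, to a boundary functional depending only on $(\Delta,\Ll,\afn,p)$. For the unweighted case this is classical, and for $p=2m$ it is worked out in~\cite{AM}; the general case follows the same integration-by-parts scheme but requires bookkeeping the extra contributions from the $(p-1)\afn\Delta_J\afn$ and $p(p-1)g_J^{-1}(\d\afn,\d\afn)$ terms in~\eqref{p-scal}. No new analytic input is required because $\afn$ is smooth and strictly positive on $\overline{\Delta}$, so the weights $\afn^{-k}$ are all smooth on the compact polytope.
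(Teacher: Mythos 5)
Your overall strategy is correct and is essentially the argument the paper has in mind: the paper gives no detailed proof of this Proposition, asserting instead that Lemma~\ref{l:GIT} and Remark~\ref{AK-futaki} allow one to carry over the toric theory of \cite{do} (worked out for $p=2m$ in \cite{AM}) -- part (b) because the weighted Futaki invariant \eqref{F} is independent of $J$, part (a) by convexity of the associated functional on symplectic potentials. Your proof of (b) replaces the paper's abstract route (contractibility of the space of invariant compatible almost-K\"ahler structures plus the momentum-map identity) by the explicit weighted Abreu--Donaldson integration by parts; in the toric setting these are equivalent, and your version has the advantage of exhibiting the linear system that pins down $\ell_{\afn,p}$. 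Your proof of (a) is the same convexity argument as in \cite{do,AM}.

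Two pieces of weight bookkeeping are off as written, and the second would derail the argument if taken literally. First, in (b) the interior term is not ``absorbed into a boundary functional by a further integration by parts'': a term $\int_\Delta u^{jk}F_{jk}\,dx$ with $F_{jk}\not\equiv 0$ independent of $u$ still depends on $u$ through $u^{jk}$, and no integration by parts removes that dependence. What actually happens is cleaner: for $\afn$ affine one has the weighted Abreu formula
\begin{equation*}
s_{J,\afn,p}\;=\;-\,\afn^{p+1}\sum_{j,k}\bigl(\afn^{-(p-1)}H^{jk}\bigr)_{,jk},
\qquad (H^{jk})=(\Hess u)^{-1},
\end{equation*}
which, using the boundary conditions \eqref{abreu} (so that $H^{jk}$ annihilates the conormal on each facet, with normal derivative normalized by $\Ll$), gives
\begin{equation*}
\int_\Delta s_{J,\afn,p}\,f\,\afn^{-(p+1)}\,dx
\;=\;2\int_{\del\Delta} f\,\afn^{-(p-1)}\,d\sigma_{\Ll}
\;-\;\int_\Delta \afn^{-(p-1)}H^{jk}f_{,jk}\,dx,
\end{equation*}
where $d\sigma_{\Ll}$ is the boundary measure determined by the labels as in \cite{do,AM}; the interior term simply \emph{vanishes} when $f$ is affine, so the pairing of $s_{J,\afn,p}$ with affine functions depends only on $(\Delta,\Ll,\afn,p)$, which is exactly what (b) requires. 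Second, the weight on the $\log\det$ term of your Mabuchi-type functional must be $\afn^{-(p-1)}$ (the weight of the formal symplectic form ${\bf\Omega}^{\afn,p}$), not $\afn^{-(p+1)}$: with
\begin{equation*}
\cM_{\afn,p}(u)\;=\;2\int_{\del\Delta}u\,\afn^{-(p-1)}\,d\sigma_{\Ll}
-\int_\Delta \ell_{\afn,p}\,u\,\afn^{-(p+1)}\,dx
-\int_\Delta \log\det(\Hess u)\,\afn^{-(p-1)}\,dx
\end{equation*}
the identity above yields $\delta\cM_{\afn,p}(u)[v]=\int_\Delta\bigl(s_{J_u,\afn,p}-\ell_{\afn,p}\bigr)v\,\afn^{-(p+1)}\,dx$, so the Euler--Lagrange equation is the desired one; with your weight $\afn^{-(p+1)}$ on $\log\det$ it is a different weighted scalar curvature equation. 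With these corrections, the rest of your argument (positive definiteness of the weighted pairing on the finite-dimensional space of affine functions for (b); strict convexity of $-\log\det$ along segments of symplectic potentials, equality only when the difference is affine, hence the two metrics are equivariantly isometric, for (a)) goes through exactly as in \cite{do,AM}.
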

\begin{defn}\label{d:(\afn,p)-extremal} The (unique) compatible toric metric
satisfying the condition (a) of Proposition~\ref{t:(\afn,p)-extremal} is called
the $(\afn,p)$-{\it extremal} metric of $(M,\omega, \T)$.
\end{defn}
\begin{thm}\label{t:LK-EM} Suppose $(\Delta, \Ll)$ is a labelled
projective cube in $\R^m$, corresponding to a compact toric orbifold $(M,
\omega, \T)$ and let $h_\Ll$ be the Levi--K\"ahler quotient metric defined
by~\eqref{metric}. Then $h_\Ll$ is the $(\afn,m+2)$-extremal metric of
$(\Delta, \Ll)$, where $\afn$ is the unique up to scale positive affine-linear
function on $\R^m$, vanishing on the hyperplane containing the intersections
of opposite facets of $\Delta$.
\end{thm}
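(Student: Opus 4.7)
The plan is to reduce the theorem to an explicit computation, using the formulae of Section~\ref{s:pcubes} and the uniqueness in Proposition~\ref{t:(\afn,p)-extremal}. First, since $\Pol$ is a projective cube, the intersection of each pair of opposite facets $\{L_{i0}=0\}$, $\{L_{i1}=0\}$ lies in a common hyperplane. In the coordinates of Section~\ref{s:pcubes}, the identity $L_{ir}=2(\mu_i-\al_{ir}\mu_0)/A_i'(\al_{ir})$ forces this hyperplane to be $\{\mu_0=0\}$, so up to positive scale $\afn=\mu_0$. Existence of $h_\Ll$ is given by Theorem~\ref{t:product-spheres}, and by Proposition~\ref{t:(\afn,p)-extremal}(a) it therefore suffices to show that the modified scalar curvature $s_{\Ll,\mu_0,m+2}$ of $h_\Ll$ is an affine function on $\Pol$.

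Next I would compute the two ingredients of \eqref{p-scal} for $\afn=\mu_0$ and $p=m+2$. From \eqref{metric}, the inverse metric on the momentum side is $g_\Ll^{-1}(\d\xi_i,\d\xi_j)=\delta_{ij}A_i(\xi_i)/\mu_0$; combined with $\d\mu_0=-\mu_0^2\sum_j b_j\,\d\xi_j$, this yields $g_\Ll^{-1}(\d\mu_0,\d\mu_0)=\mu_0^3\sum_i b_i^2 A_i(\xi_i)$. For the Laplacian, the Riemannian volume form in $(\xi,t)$-coordinates is $b_0\mu_0^{m+1}\,\d\xi\wedge \d t$, as one reads off $\omega_\Ll^m/m!$ together with the identity $\theta_1\wedge\cdots\wedge\theta_m=b_0\mu_0\,\d t_1\wedge\cdots\wedge \d t_m$ (a direct expansion via \eqref{theta} using $b_0\mu_0+\sum_{j=1}^m b_j\mu_j=1$). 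Applying the standard formula for the Laplacian of a function of $\xi$ then gives
\begin{equation*}
\Delta\mu_0=\mu_0\sum_i b_i A_i'(\xi_i)-(m+2)\mu_0^2\sum_i b_i^2 A_i(\xi_i).
\end{equation*}

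The decisive step is an algebraic cancellation. Substituting \eqref{s} for $s_\Ll$ together with the expressions above into \eqref{p-scal} with $p=m+2$, both the $\mu_0^2\sum_i b_iA_i'$ terms and the $\mu_0^3\sum_i b_i^2 A_i$ terms cancel identically, leaving
\begin{equation*}
s_{\Ll,\mu_0,m+2}=-\sum_{i=1}^m \mu_0\,A_i''(\xi_i).
\end{equation*}
Since each $A_i$ is a polynomial of degree at most three in $\xi_i$, $A_i''(\xi_i)$ is affine in $\xi_i$; using $\mu_i=\xi_i\mu_0$, the quantity $\mu_0\,A_i''(\xi_i)$ is therefore affine in $(\mu_0,\mu_i)$. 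Hence $s_{\Ll,\mu_0,m+2}$ is an affine function on $\As$, which is the defining property of the $(\mu_0,m+2)$-extremal metric in Definition~\ref{d:(\afn,p)-extremal}.

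The main obstacle is bookkeeping: the sign conventions for $\Delta$ and $\d^c$ must be propagated consistently among \eqref{s}, \eqref{p-scal}, and the toric Laplacian computation, and one must verify that both cancellations above occur exactly, which is precisely what pins down the value $p=m+2$. No further analytic input is needed; the uniqueness in Proposition~\ref{t:(\afn,p)-extremal}(a) then identifies $h_\Ll$ with the $(\mu_0,m+2)$-extremal metric of $(\Pol,\Ll)$.
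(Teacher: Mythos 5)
Your proposal is correct and takes essentially the same route as the paper: identify $\afn=\mu_0$, compute in the $(\xi_i)$-coordinates that $s_{\Ll,\mu_0,m+2}=-\mu_0\sum_{i=1}^m A_i''(\xi_i)$, note this is affine in the momenta since $\mu_i=\xi_i\mu_0$ and $\deg A_i\leq 3$, and conclude by the uniqueness of the $(\afn,m+2)$-extremal metric. The only difference is organizational---you trace the scalar formula \eqref{p-scal} using \eqref{s} together with $\Delta\mu_0$ and $g_\Ll^{-1}(\d\mu_0,\d\mu_0)$, whereas the paper traces the modified Ricci form \eqref{p-ricci} built from the Ricci potential \eqref{rp}---and your intermediate claims (the volume form $b_0\mu_0^{m+1}\,\d\xi\wedge\d t$, the Laplacian $\Delta\mu_0=\mu_0\sum_i b_iA_i'(\xi_i)-(m+2)\mu_0^2\sum_i b_i^2A_i(\xi_i)$, and the resulting exact cancellation) all check out.
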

\begin{proof} We take $\afn=\mu_0$ and apply~\eqref{p-ricci} with $p=m+2$,
$\rho_J=\rho_\Ll$ to obtain
\begin{equation}\label{rho}
\begin{split}
\rho_{\Ll,\mu_0} &= \mu_0^2 \rho_J + (m+1)\bigl(\mu_0 \d\d^c\mu_0 -
\tfrac12(m+2)\d\mu_0\wedge \d^c\mu_0\bigr) \\
&= -\tfrac{1}{2} \mu_0^2 \d\d^c\log\Bigl(\mu_0^{m+2}\prod_i A_i(\xi_i)\Bigr)
+ (m+1)\bigl(\mu_0^2 \d\d^c\log\mu_0-\tfrac{m}{2}\d\mu_0\wedge \d^c\mu_0\bigr)\\
&=\tfrac12\Bigl(-\mu_0^2\d\d^c\log\prod_i A_i(\xi_i)
+ m \mu_0^2\d\d^c\log\mu_0- m(m+1)\d\mu_0\wedge \d^c\mu_0 \Bigr).
\end{split}
\end{equation}
We now compute
\begin{gather*}
\d\d^c\log\prod_j A_i(\xi_i) =\sum_i\d\bigl( A_i'(\xi_i) \theta_i \bigr)
= \sum_i  A_i''(\xi_i) \d\xi_i\wedge \theta_i -\sum_{i,j} b_i\mu_0 A_i'(\xi_i)
\d\xi_j\wedge\theta_j \displaybreak[0]\\
\begin{split}
\d\d^c\log\mu_0 &= -\sum_i\d\bigl(b_i\mu_0 A_i(\xi_i)\theta_i \bigr)\\
&=-\sum_i b_i\mu_0 A_i'(\xi_i)\d\xi_i\wedge \theta_i + \sum_{i,j}
A_i(\xi_i)\bigl(b_ib_j \mu_0^2 \d\xi_j\wedge\theta_i + b_i^2 \mu_0^2
\d \xi_j\wedge\theta_j\bigr)
\end{split}\displaybreak[0]\\
\d\mu_0\wedge \d^c\mu_0 = \sum_{i,j} b_i^2\mu_0^4A_i(\xi_i)\d\xi_i\wedge \theta_j.
\end{gather*}                                     
It follows that
\begin{equation*}
s_{J,\mu_0} = \frac{2m \rho_{\Ll,\mu_0} \wedge \omega^{m-1}}{\omega^m}
=-\frac{\sum_{i=1}^m A''_i(\xi_i)}{b_0+b_1\xi_1 + \cdots + b_m\xi_m},
\end{equation*}
which (as $\deg A_j(\xi)\leq 3$) is an affine-linear function in momenta.
\end{proof}

\begin{rem}\label{r:EM-extremal}
We notice that for $m=2$, $m+2=2m$ and $s_{J,w,4}$ computes the scalar
curvature of the conformal oppositely oriented metric $\tilde h_\Ll =
(1/\afn^2) h_\Ll$ of Proposition~\ref{p:quotient}, i.e., $h_\Ll$ is
$\afn$-extremal in the sense of \cite{AM}.
\end{rem}

\subsection{Extremal Levi--K\"ahler quotients} Formula \eqref{s} shows that
for $\ell=m>2$, the Levi--K\"ahler metric $h_{\Ll}$ associated to a projective
cube cannot be extremal unless $b_i=0, i=1, \ldots m$, i.e., $M$ is the
product of weighted projective lines. However, we show below that when
$\ell=2$, Levi--K\"ahler quotients of a product of two spheres $\Sph^{2m_1+1}
\times \Sph^{2m_2+1}$ can provide new examples of extremal K\"ahler orbifolds.

\subsubsection{Extremal Levi--K\"ahler quotients of $\Sph^3 \times \Sph^3$}

As any quadrilateral is a projective cube, $h_\Ll$ is $(\afn,4)$-extremal by
Theorem~\ref{t:LK-EM}. Furthermore, $s_{J, \afn, 4}$ is the scalar curvature
of the conformal metric ${\tilde h}_\Ll = (1/\afn^2) h_\Ll$, see \cite{AM}.
By Proposition~\ref{p:quotient}, $(h_\Ll, J)$ is either a product, of Calabi
type, or a regular ambitoric K\"ahler metric of Segre type. We can then
use~\cite{ACG1,Eveline} to characterize the extremality of $h_{\Ll}$ as
follows.

\begin{prop}\label{p:extremal} Let $(M,\omega)$ be a compact toric $4$-orbifold
whose rational Delzant polytope is a labelled quadrilateral $(\Pol,\Ll)$ and
$h_\Ll$ the corresponding Levi--K\"ahler metric. If $(\Pol,\Ll)$ is a
parallelogram, then $(M,\omega_{\Ll}, h_\Ll)$ is an extremal toric orbifold
which is the K\"ahler product of two extremal weighted projective
lines\textup; otherwise $h_\Ll$ is extremal if and only if the oppositely
oriented ambitoric metric ${\tilde h}_\Ll= (1/\afn^2)h_\Ll$ has constant
scalar curvature, or equivalently, $\tilde h_\Ll$ is a conformally K\"ahler,
Einstein--Maxwell metric in the sense \cite{AM}, where $\afn$ is a positive
affine linear function on $\Delta$, determined up to positive scale by the
property that it vanishes where the opposite sides of $\Delta$ intersect.
\end{prop}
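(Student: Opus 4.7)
The argument splits along the trichotomy of Proposition~\ref{p:quotient} and hinges on Theorem~\ref{t:LK-EM} in the dimension-four special case $p=m+2=2m=4$.

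\emph{Parallelogram case.} When $(\Pol,\Ll)$ is a parallelogram, both pairs of opposite facets are parallel, and Proposition~\ref{p:quotient} then puts $h_\Ll$ in the product case of Section~\ref{s:product}, so that $h_\Ll = h_1 \times h_2$ is a K\"ahler product of two Levi--K\"ahler quotients of $\Sph^3$ by $S^1$. By Example~\ref{ex:bochner-flat}, each factor $h_i$ is a Bochner-flat, hence extremal, K\"ahler metric on a weighted projective line, and so is their K\"ahler product, since the total scalar curvature is the sum of the two factor scalar curvatures, each affine-linear in its own momentum.

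\emph{Setup for the non-parallelogram case.} In the remaining subcases, Proposition~\ref{p:quotient} exhibits $h_\Ll$ as an ambitoric K\"ahler metric of Calabi type~\eqref{calabi} (one pair of opposite facets parallel) or regular Segre type~\eqref{ansatz} (no pair parallel), together with an oppositely oriented K\"ahler partner $\tilde h_\Ll=\afn^{-2}h_\Ll$. Here $\afn>0$ is the affine function on $\Pol$, unique up to positive scale, vanishing on the intersections of pairs of opposite facets; it is well defined because every quadrilateral is a projective cube in the sense of Section~\ref{s:pcubes}. Since $m=2$, Theorem~\ref{t:LK-EM} with $p = 2m = 4$ yields that $s_{J,\afn,4}$ is an affine-linear function of the momenta, and by Remark~\ref{r:EM-extremal} this equals $s_{\tilde h_\Ll}$. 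Thus $\tilde h_\Ll$ is always classically extremal, and the proposition reduces to showing that $h_\Ll$ is extremal if and only if this affine-linear $s_{\tilde h_\Ll}$ has vanishing linear part.

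\emph{Main equivalence.} Using the closed-form scalar curvature~\eqref{s} with $m=2$ and the explicit parametrization of $h_\Ll$ by two cubics $A_1,A_2$ via~\eqref{metric}, the condition that $s_{h_\Ll}$ be affine-linear in the momenta---equivalently, that $s_{h_\Ll}\cdot\mu_0^{-1}$ be affine in $(\xi_1,\xi_2)$---translates into the vanishing of all coefficients of monomials of degree $\geq 2$ in $(\xi_1,\xi_2)$; a bookkeeping calculation reduces these to the conditions that the cubic parts of $A_1$ and $A_2$ vanish and their quadratic parts satisfy a single linear coupling. A parallel calculation for $s_{\tilde h_\Ll}$, using the orthotoric form~\eqref{ansatz} of $\tilde h_\Ll$ (or its Calabi counterpart from~\eqref{calabi}), shows that these are exactly the conditions under which the affine-linear $s_{\tilde h_\Ll}$ is constant. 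Alternatively, one invokes the classification of extremal ambitoric K\"ahler metrics in~\cite{ACG1,Eveline}, where the equivalence between extremality of one ambitoric partner and CSC of the other is explicit. The final identification of ``$s_{\tilde h_\Ll}$ constant'' with ``$\tilde h_\Ll$ is conformally K\"ahler Einstein--Maxwell in the sense of~\cite{AM}'' is immediate in dimension four: the Ricci tensor of the Hermitian metric $\tilde h_\Ll = \afn^{-2} h_\Ll$ is $\tilde J$-invariant whenever $h_\Ll$ is K\"ahler and $\afn$ is a Killing potential of $h_\Ll$, so the Einstein--Maxwell condition collapses to CSC. The main obstacle is the polynomial-matching step, a direct but delicate calculation that requires careful tracking between the two ambitoric descriptions.
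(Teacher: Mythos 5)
Your overall architecture is the same as the paper's: split according to Proposition~\ref{p:quotient}, treat the parallelogram case as a K\"ahler product of extremal weighted projective lines, and close by observing (as in \cite{AM}) that since $\afn$ is a Killing potential of $h_\Ll$, constancy of $s_{\tilde h_\Ll}$ is exactly the conformally K\"ahler Einstein--Maxwell condition. Those parts are fine. However, in the non-parallelogram case there are two genuine problems. First, the assertion that ``$\tilde h_\Ll$ is always classically extremal'' is a non sequitur and is in fact false: Remark~\ref{r:EM-extremal} says that $s_{J,\afn,4}=s_{\tilde h_\Ll}$ is affine-linear in the momenta of $\omega_\Ll$, i.e.\ that $h_\Ll$ is $(\afn,4)$-extremal; classical extremality of $(\tilde h_\Ll,\tilde J)$ would require $s_{\tilde h_\Ll}$ to be affine in the momenta of its \emph{own} K\"ahler form, and the two momentum systems are not affinely related. (If $\tilde h_\Ll$ were always extremal, then by \cite{ACG1} --- where extremality of one ambitoric partner is equivalent to extremality of the other --- $h_\Ll$ would always be extremal, contradicting the very statement you are proving.) This slip does not feed into your later reduction, but it shows the distinction between weighted extremality and extremality of the conformal partner is being blurred.

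Second, the step carrying all the weight --- $h_\Ll$ extremal $\iff$ $s_{\tilde h_\Ll}$ constant --- is only sketched, and both versions of the sketch are off. The announced outcome of the ``bookkeeping'' (vanishing of the cubic parts of $A_1,A_2$ plus one linear relation on the quadratic parts) is not the correct extremality condition: in the Calabi case the criterion used in the paper (from \cite{ACG,Eveline}) is that $B$ has degree $2$ and $(xA(x))''(0)=-B''(0)$, with $A$ allowed to remain cubic; correspondingly, constancy of $s_{\tilde h_\Ll}=-\mu_0\bigl(A_1''(\xi_1)+A_2''(\xi_2)\bigr)$ means $A_1''(\xi_1)+A_2''(\xi_2)$ is proportional to $b_0+b_1\xi_1+b_2\xi_2$, which does not force the cubic term of $A_i$ to vanish when $b_i\neq 0$. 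The fallback citation is also inaccurate: \cite{ACG1,Eveline} do not state ``one partner extremal iff the other CSC''; they give ``extremal iff extremal'' together with explicit local forms, and the passage from extremality to CSC of $\tilde h_\Ll$ uses crucially the constraint $\deg A_i\le 3$ (with distinct real roots) imposed by the Levi--K\"ahler construction --- this is exactly the paper's ``as $\deg A\le 3$'' step, and for quartic polynomials the equivalence fails. Your proposal never invokes this degree bound, so as written the equivalence cannot be reached. To repair it, either carry out the computation from \eqref{s} honestly in momentum coordinates (imposing affine-linearity in $\mu$, not in $\xi$) and match it against the CSC condition above, or argue as the paper does: extremality of $h_\Ll$ iff of $\tilde h_\Ll$ by ambitoric theory, then use the local forms \eqref{calabi} and \eqref{ansatz} together with $\deg A_i\le 3$ to identify extremality of $\tilde h_\Ll$ with constant scalar curvature.
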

\begin{proof} The product case follows from Proposition~\ref{p:quotient} (see
Section~\ref{s:product}). When $h_\Ll$ is of Calabi-type, i.e. given by
\eqref{calabi} for polynomials $A(x)$ and $B(y)$ of degree $2$ or $3$, the
negative ambitoric metric $\tilde h_{\Ll}= x^2 h_{\Ll}$ is also of Calabi-type
with respect to the variables $(x, y)$ and functions $A(x)$ and $B(y)$. It
follows from \cite{ACG,Eveline} that $h_\Ll$ is extremal if and only if
$\tilde h_{\Ll}$ is extremal if and only if $B(y)$ has degree $2$ and
$(xA(x))''(0) = - B''(0)$. As $\deg A \leq 3$, this is precisely the condition
that $\tilde h_{\Ll}$ is of constant scalar curvature (see
\cite[Prop.~14]{ACG}). The case when $h_\Ll$ is regular ambitoric (i.e.,
negative orthotoric) is treated similarly, using the local form \eqref{ansatz}
and \cite[Prop.~11]{ACG}. Finally, as $\tilde h_{\Ll}= (1/\afn^2) h_{\Ll}$
with $\afn$ being a Killing potential with respect to $h_{\Ll}$, the scalar
curvature $\tilde h_{\Ll}$ is constant if and only if $\tilde h_{\Ll}$ defines
a conformally K\"ahler, Einstein--Maxwell metric, see \cite{AM}.
\end{proof}

\begin{rem} The metric $h_\Ll$ is extremal (and hence $\tilde h_\Ll$ is
Einstein--Maxwell) if and only if the affine function defined by $(\Delta,
\Ll)$ in Proposition~\ref{t:(\afn,p)-extremal}(b) is constant.  By an
observation originating in~\cite{Eveline}, for a given quadrilateral $\Delta$
this places two linear constraints on the labels $\Ll$, see also
\cite{AM}. Thus, the above characterization for the extremality of $h_\Ll$
lead to the following useful observation: given a compact convex quadrilateral
$\Pol$ which is not a parallelogram, there is a two-parameter family of inward
normals to the faces, such that the corresponding Levi--K\"ahler metric is
extremal.
\end{rem}

\subsubsection{CSC Levi--K\"ahler quotients of $\Sph^5\times \Sph^3$}
We discuss here examples of Levi--K\"ahler quotients of constant scalar
curvature (CSC), obtained from the generalized Calabi construction in
\S\ref{s:generalized-calabi}, where the base is $S= \C P^1$ equipped with a
Fubini--Study metric $\omega_S$ and the fibre $V$ is a toric orbifold with
Delzant image a simplex in $\R^2$. By Proposition~\ref{p:fibration}, the
resulting $6$-dimensional orbifold $(M,g, \omega)$ is obtained as a
Levi--K\"ahler quotient of $\Sph^5\times \Sph^3$ as soon as the fibrewise
metric is Bochner--flat (which is the condition to be Levi--K\"ahler in the
case of one factor). As the extremality condition is difficult to characterize
in general, we shall use the hamiltonian $2$-form ansatz with $\ell=2$ and
$N=1$ from \cite{hfkg1,ACGT}, which in turn is a special case of the
generalized Calabi ansatz \cite[\S4]{ACGT}. We briefly recall the
construction below and invite the Reader to consult \cite{hfkg1,ACGT} for
further details.

Let $(S, g_{S}, \omega_{S})$ be a compact Riemann orbi-surface and $\eta$ a
real constant. We build a K\"ahler metric $(g, \omega)$ with a hamiltonian
2-form of order $2$ and constant root $\eta$, defined on an orbifold fibration
over $M \to S$, with fibres isomorphic to an orbifold quotient of a weighted
projective plane. The K\"ahler metric $(g,\omega)$ is written on a dense
subset $\mathring{M} \sub M$ as follows (see \cite{ACGT}):
\begin{equation}\label{hfkg-metric}
\begin{split}
g &=(\eta - \xi_1)(\eta- \xi_2) g_{S}
+\frac{(\xi_1-\xi_2)p_c(\xi_1)}{F(\xi_1)} d\xi_1^2 + \frac{(\xi_2-\xi_1)p_c(\xi_2)}{F(\xi_2)} d\xi_2^2 \\
 & \ \ \ \ \  + \frac{F(\xi_1)}{(\xi_1- \xi_2)p_c(\xi_1)}\big(\theta_1 + \xi_1 \theta_2\big)^2 + \frac{F(\xi_2)}{(\xi_2- \xi_1)p_c(\xi_2)}\big(\theta_1 + \xi_2 \theta_2\big)^2  \\
\omega & =(\eta - \xi_1)(\eta- \xi_2)\omega_{S}
+d\sigma_1\wedge \theta_1 + d\sigma_2 \wedge \theta_2, \\
d\theta_1 & = - \eta \omega_{S},  \ \ d\theta_2 = \omega_{S} \ \ p_c(t) =  (t-\eta), \ \sigma_1= \xi_1 + \xi_2, \ \sigma_2= \xi_1\xi_2.
\end{split}
\end{equation}
Here $\xi_1 \in [-1, \beta]$, $\xi_2 \in [\beta, 1]$ (for some $|\beta|<1$)
are the {\it orthotoric} coordinates on the fibre, $|\eta|>1$, and
$F(x)$ is a smooth function which satisfies the positivity and boundary
conditions
\begin{enumerate}
\item[$\bullet$] $F(x)/p_c(x)>0$ on $(\beta, 1)$;  $F(x)/p_c(x)<0$ on $(-1, \beta)$;
\item[$\bullet$] $F(\pm 1)= F(\beta)=0$.
\end{enumerate}
It is easy to see that \eqref{hfkg-metric} is a special case of
\eqref{generalized-calabi}, with $N=1$, $(x_1,x_2)=(\sigma_1, \sigma_2)$,
$p_{11}=-\eta, p_{12}= 1, c_1= \eta^2$, and a toric orbifold fibre whose
Delzant polytope $\Delta$ is the the image of $[-1,\beta]\times [\beta, 1]$
under the map $(\sigma_1, \sigma_2)=(\xi_1,\xi_2, \xi_1\xi_2)$ and labelling
\[
L_{-1} = -c_{-1}(\sigma_1 + \sigma_2 +1),\quad
L_{+1}=-c_{1}(-\sigma_1 + \sigma_2 + 1),\quad
L_{\beta}= -c_{\beta}(-\beta\sigma_1 + \sigma_2 + \beta^2),
\]
where $c_{\pm 1}\bigl(F/p_c\bigr)'(\pm 1)=2=
c_{\beta}\bigl(F/p_c\bigr)'(\beta)$, see \cite[Prop.~9]{ACGT}. Here we assume
the usual rationality condition for the simplex $(\Delta, \Ll)$, which
certainly holds for $\beta, \eta, c_{\pm 1}, c_{\beta}\in\Q$.

We recall from \cite{hfkg1} that the metric \eqref{hfkg-metric} is extremal if
and only if the scalar curvature of $g_S$ is constant $s$ and $F(x)$ is a
polynomial of degree at most $5$ satisfying
\begin{equation}\label{condition}
F''(\eta)=-s.
\end{equation}
The metric $g$ is of constant scalar curvature if, furthermore, the degree of
$F(x)$ is at most $4$. Likewise, by the same result, the fibrewise orthotoric
metric
\begin{equation}\label{orthotoric}
\begin{split}
g^V &=\frac{(\xi_1-\xi_2)p_c(\xi_1)}{F(\xi_1)} d\xi_1^2
   + \frac{(\xi_2-\xi_1)p_c(\xi_2)}{F(\xi_2)} d\xi_2^2 \\
&\qquad + \frac{F(\xi_1)}{(\xi_1- \xi_2)p_c(\xi_1)}\big(dt_1 + \xi_1 dt_2\big)^2
      + \frac{F(\xi_2)}{(\xi_2- \xi_1)p_c(\xi_2)}\big(dt_1 + \xi_2 dt_2\big)^2  
\end{split}
\end{equation}
is Bochner--flat if and only if $F(x)/p_c(x)$ is a polynomial of degree at
most $4$. By Proposition~\ref{p:fibration}, taking in \eqref{metric} $(S,
\omega_{S})$ to be (an isometric orbifold quotient of) $\C P^1$ endowed with a
Fubini--Study metric and $F(x)=-c(x^2-1)(x-\beta)(x -\eta) (x-\gamma)$
(resp. $F(x)=-c(x^2-1)(x-\beta)(x -\eta)$), one gets an ansatz for extremal
(resp. constant scalar curvature) toric orbifolds, which are also
Levi--K\"ahler quotients of $\Sph^5 \times \Sph^3$.

We further specialize to the constant scalar curvature case,
i.e. $F(x)=-c(x^2-1)(x-\beta)(x -\eta)$ for a non-zero positive constant $c$.
Then, \eqref{condition} reduces to $2c(3\eta^2 - 2\beta \eta -1)= s$, whereas
the positivity conditions for $F(x)$ imply $\eta <-1$. Together with
$Scal_{S}>0$, these are the only constraints, subject to a rationality
condition which is trivially solved by taking $\beta, \eta, c$ rational.  For
instance, letting $\beta = 1/n, \eta = -n, c= 2/(3n^2+1), s=4$ gives rise to a
CSC Levi--K\"ahler quotient orbifold, which is not a product.

\end{document}